\documentclass[12pt]{amsart}

\usepackage{amsmath, amssymb}
\usepackage{enumitem}
\usepackage{booktabs}

\usepackage{multirow}

\usepackage{tabularx}

\usepackage{graphicx}

\usepackage{MnSymbol}
\newtheorem{theorem}{Theorem}[section]
\newtheorem{lemma}[theorem]{Lemma}
\newtheorem{prop}[theorem]{Proposition}

\newtheorem{corollary}[theorem]{Corollary}

\theoremstyle{definition}

\theoremstyle{remark}
\newtheorem{remark}[theorem]{Remark}

\newcommand{\rd}{{\mathbb R^d}}
\newcommand{\rr}{{\mathbb R}}

\newcommand{\N}{{\mathbb N}}

\newcommand{\p}{{\mathcal P}}
\renewcommand{\d}{\mathrm{d}}
\newcommand{\lex}{<_{\text{lex}}}
\newcommand{\EE}{\mathbb{E}}
\newcommand{\PP}{\mathbb{P}}
\newcommand{\ZZ}{\mathbb{Z}}
\newcommand{\RR}{\mathbb{R}}
\newcommand{\NN}{\mathbb{N}}

\usepackage{comment}
\usepackage{tikz}
\usepackage{float}
\usepackage{relsize}
\usepackage{enumitem}
\usepackage[T1]{fontenc}
\usepackage[english]{babel}
\usepackage{xcolor}
\usepackage{graphicx}
\usepackage{cite}
\usepackage{mathtools}

\date{\today}

\begin{document}

\sloppy

\title[{POT in Scale-Free Random Graphs}]{Peaks Over Threshold in Scale-Free Random Graphs}

\author{Arnaud Rousselle}
\address{Arnaud Rousselle, Université Bourgogne Europe, CNRS, IMB UMR 5584, F-21000 Dijon, France}
\email{arnaud.rousselle@u-bourgogne.fr}

\author{Ercan S\"onmez}
\address{Ercan S\"onmez, Faculty of Mathematics, Bochum, Germany}
\email{ercan.soenmez\@@{}rub.de}

\begin{abstract}
We explore extreme value phenomena in spatial {scale-free} random graphs in a continuum setting based on a homogeneous Poisson point process in $\RR^d$.
Vertices carry i.i.d.\ weights $(W_x)$ and, conditionally on the vertex set and the weights, edges are present independently with probability
$p_{xy}=1-\exp\{-\lambda W_xW_y/|x-y|^\alpha\}$.
Assuming Pareto-type weight tails with index $\beta>0$ and working in parameter ranges where degrees are almost surely finite, we study extremes and peaks over thresholds (POT) of edge lengths in a growing observation window. Our focus is the precise impact of the presence of (large) weights on edge lengths, captured through explicit scaling regimes and conditional POT limit theorems. Our main results identify a three-phase behavior governed by the weight-tail parameter $\beta$.
We both deduce Fr\'echet-type limits for the maximum edge length itself and we reveal POT structures under a hub conditioning by proving a POT limit theorem. In the finite-mean regime $\beta>1$, the leading scaling agrees with the unweighted model up to a constant. By contrast, for $\beta\le 1$ the weights have a macroscopic effect on extreme edge lengths: for $\beta< 1$ the scaling changes, and the borderline case $\beta=1$ exhibits additional logarithmic corrections.
%We show convergence to a non-degenerate limit $G$ whose tail is asymptotically equivalent to a generalized Pareto tail, with parameter determined by the corresponding extreme-value index.
The proofs combine Stein-type Poisson approximation via a Palm--coupling approach with a refined treatment of the dependence created by the conditioning event.
\end{abstract}

\keywords{Scale-free random graphs, extreme value theory, peaks over threshold, phase transition.}
\subjclass{Primary: 60G70, 05C80, ; Secondary: 60F05, 05C82.}

\maketitle

\baselineskip=18pt
\sloppy

\section{Introduction}\label{sec:intro}

\subsection{Motivation}\label{subsec:motivation}

Spatial random graphs provide a natural framework for modeling networks in which geometry constrains the formation of links.
A prominent example is {long-range percolation} and its continuum analogue, the {random connection model}, where the likelihood of an edge between two vertices decreases with their Euclidean separation.
Such distance-dependent connections offer a parsimonious mechanism for the \emph{small-world phenomenon}: the presence of sufficiently many long edges can drastically reduce graph distances between remote vertices.

A second stylized feature frequently observed in real networks is strong degree heterogeneity, often summarized by the term \emph{scale-free}.
A mathematically tractable way to incorporate hubs into spatial long-range models is to assign i.i.d.\ vertex weights and let the edge probabilities depend both on weights and distance; this leads to {scale-free percolation} on $\ZZ^d$ and to heterogeneous random connection models on $\RR^d$.
These models have been introduced and studied with a focus on degree distributions, percolation properties, and graph distances; see, for example, the lattice setting in \cite{deijfen2013scale} and its continuum counterpart in \cite{deprez2019scale}.

In practice, we consider scale-free graphs in a continuum setting. The vertex set is given by a stationary Poisson point process in $\RR^d$.
Each vertex $x$ carries an i.i.d.\ weight $W_x\ge 0$ (independent of the locations).
Conditionally on the vertex set and the weights, edges are present independently, and the connection probability between two distinct vertices
$x$ and $y$ is taken of the form
\begin{equation}\label{eq:conn-prob-intro}
p_{xy}=1-\exp\!\left\{-\lambda\,\frac{W_xW_y}{|x-y|^\alpha}\right\},
\qquad \lambda>0,\ \alpha>0,
\end{equation}
where $|\cdot|$ denotes the Euclidean norm.
The parameter $\alpha$ controls the long-range decay in the distance, while $\lambda$ acts as an overall intensity parameter for the appearance of the edges.
For concreteness and to obtain explicit asymptotics, we will assume Pareto-type tails for the weights,
that is, $\PP(W_x>w)=w^{-\beta}$ for $w\ge 1$ with some $\beta>0$.

From a theoretical viewpoint, these models also exhibit a rich variety of {extreme value} phenomena.
In earlier work \cite{rs1, rs2}, we initiated a systematic extreme value analysis for {unweighted} discrete and continuous long-range percolation by studying the maximal length of edges with at least one endpoint in a growing observation window.
In the heavy-tailed regime for the connection function, the normalized maximum converges to the Fr\'echet law, except for a critical behavior at the borderline exponent $\alpha=2d$, leading to an unexpected non-standard limiting distribution.
This suggests that long-range spatial graphs are a natural source of further extreme value structures beyond classical block-maxima limits.

The present paper continues this line of research in two directions.
First, we investigate how vertex weights alter the scale of the longest edges in weighted long-range models.
Second, we address a peaks over threshold (POT) question: conditioning on the presence of unusually large weights (hubs), how are extreme edge lengths affected?
Our results uncover a sharp dependence on the tail parameter of the weights, including a borderline regime in which logarithmic corrections emerge.

\subsection{Main results}\label{subsec:main-results}

We denote the corresponding random graph by $(V,E)$.
We consider an observation window
\[
B_n=[-n,n]^d,
\]
and we study the longest edge with at least one endpoint in $B_n$,
\[
e_n^*:=\max_{\{x,y\}\in E_{n}} |x-y|,
\qquad
E_{n}:=\bigl\{\{x,y\}\in E:\ x\in V\cap B_n\bigr\}.
\]
A standard route to extreme value limits is via exceedances.
For thresholds $r_n$ depending on a level parameter $r>0$, we consider the number of {exceeding} edges touching $B_n$,
\begin{align}
F(n,r)
&:= \sum_{x\in V\cap B_n}\sum_{y\in V\cap B_n^c}
\mathbf{1}_{\{|x-y|>r_n,\, x\leftrightarrow y\}}
+ \frac12\sum_{x\in V\cap B_n}\sum_{y\in V\cap B_n}
\mathbf{1}_{\{|x-y|>r_n,\, x\leftrightarrow y\}},
\label{eq:Fnr-intro}
\end{align}
where $\{x\leftrightarrow y\}$ denotes the event that $x$ and $y$ are connected by an edge. Then $\{e_n^*\le r_n\}$ is equivalent to $\{F(n,r)=0\}$.
Our analysis identifies thresholds $r_n$ such that $F(n,r)$ converges to a Poisson law with explicit mean measure $\Lambda(r)$; this yields Fr\'echet-type limits for $e_n^*$.

\medskip\noindent

A complementary viewpoint to such maxima, important in extreme value theory, is provided by \emph{peaks over threshold} (POT):
rather than considering only the maximum, one studies the distributional structure of exceedances above a high level.
In the heavy-tailed (Fr\'echet) domain, POT limits are governed by generalized Pareto tails.
Concretely, if $X\ge 0$ has a regularly varying tail with index $-\kappa$ for some $\kappa>0$, that is,
\[
\PP(X>x)=x^{-\kappa}L(x), \qquad x\to\infty,
\]
with $L$ slowly varying, then
\begin{equation*}%\label{eq:classical-POT}
\PP\!\left(\frac{X-u}{u}>t \,\bigg|\, X>u\right)\ \longrightarrow\ (1+t)^{-\kappa},
\qquad t\ge 0,
\end{equation*}
as $u\to\infty$.
Equivalently, the conditional excess distribution above a high threshold converges to a generalized Pareto law with tail index (and scale parameter) $\kappa$. In this spirit, we study a POT question in which extreme edge lengths are observed under a rare-event conditioning that enforces the presence of a hub.
Let
\[
W_n^*:=\max_{x\in V\cap B_n} W_x,
\]
and consider conditional probabilities of the form
\[
\PP\!\left(e_n^*>t_n \,\big|\, W_n^*>d_n\right),
\]
for suitable threshold sequences $d_n$ and $t_n$.
In this way, the threshold is imposed through an extremal weight event, and we ask how the resulting conditional law of extreme edge lengths compares to the classical POT paradigm.

Before stating our main results, we recall a basic well-posedness property of the model.
In both the lattice and the continuum setting, the degree of every vertex is almost surely finite provided that
\begin{equation}\label{eq:finite-degree-cond-intro}
\min\{\alpha,\alpha\beta\}>d.
\end{equation}
In the lattice model this follows from \cite[Theorem~3.1]{deijfen2013scale}, while the corresponding statement
for the continuum model is proved in \cite[Theorem~3.1]{deprez2019scale}.
Throughout the sequel we therefore work under \eqref{eq:finite-degree-cond-intro}.

A first novel feature of the weighted model is a sharp three-phase behavior governed by the weight-tail parameter $\beta$.
In the finite-mean regime $\beta>1$, the weights do not change the leading order of the longest-edge scale compared to the unweighted model, except through an explicit multiplicative constant.
By contrast, when $\beta\le 1$ (so that $\EE[W]=\infty$), the weights have a genuine effect on extreme edge lengths:
for $\beta\leq 1$ the scaling changes, and the borderline case $\beta=1$ yields a distinctive regime with additional logarithmic corrections.
The following theorem makes this trichotomy precise in terms of Poisson limits for exceedances and the resulting Fr\'echet-type limits for the maximum.

\begin{theorem}[Unconditional exceedances and maximum edge length]\label{thm:uncond-max}
Consider the continuum model in which $V$ is a homogeneous Poisson point process on $\RR^d$ with unit intensity, and let the connection probabilities be given by \eqref{eq:conn-prob-intro}.
For $n\in\NN$ set $B_n=[-n,n]^d$ and define
\[
e_n^*:=\max_{\{x,y\}\in E_{n}} |x-y|,
\qquad
E_{n}:=\bigl\{\{x,y\}\in E:\ x\in V\cap B_n\bigr\},
\]
as well as the exceedance count $F(n,r)$ from \eqref{eq:Fnr-intro}.
Assume that the weight distribution satisfies $\PP(W>w)=w^{-\beta}$ for $w\ge 1$, with tail index $\beta>0$, and that the
finite-degree condition \eqref{eq:finite-degree-cond-intro} holds.

\smallskip
For $r>0$ set $r_n=c_n r$ and define
\[
\theta=
\begin{cases}
\alpha\beta-d, & 0<\beta<1,\\
\alpha-d, & \beta\ge 1.
\end{cases}
\]
Then, in each of the regimes \textup{(a)}--\textup{(c)} below, with the corresponding choice of $c_n$, we have
\begin{equation*}\label{eq:pois-frechet-unified}
F(n,r)\ \xrightarrow[n\to\infty]{d}\ \mathrm{Poisson}\!\bigl(r^{-\theta}\bigr),
\qquad
\PP\!\left(c_n^{-1}e_n^*\le r\right)\ \longrightarrow\ \exp\!\bigl(-r^{-\theta}\bigr),
\qquad r>0,
\end{equation*}
that is, $c_n^{-1}e_n^*$ converges in distribution to a Fr\'echet random variable with parameter $\theta$.

\smallskip
\noindent\textup{(a) {Infinite-mean regime} $(0<\beta<1)$.}
Assume $\alpha\beta\in(d,2d]$ and set
\[
c_n=C_1\,n^{\frac{d}{\alpha\beta-d}}(\log n)^{\frac{1}{\alpha\beta-d}}.
\]

\smallskip
\noindent\textup{(b) {Borderline regime} $(\beta=1)$.}
Assume $\alpha\in(d,2d]$ and set
\[
c_n=C_2\,n^{\frac{d}{\alpha-d}}(\log n)^{\frac{2}{\alpha-d}}.
\]

\smallskip
\noindent\textup{(c) {Finite-mean regime} $(\beta>1)$.}
Assume $\alpha\in(d,2d)$ and set
\[
c_n=C_3\,n^{\frac{d}{\alpha-d}}.
\]
Here $C_1, C_2, C_3\in(0,\infty)$ denote explicit constants independent of $n$ (specified in
Corollary~\ref{cor:cn-choice}).
\end{theorem}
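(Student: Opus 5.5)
The plan is to obtain both claims from a Poisson approximation of $F(n,r)$. Since $\{e_n^*\le r_n\}=\{F(n,r)=0\}$, the Fr\'echet limit follows from $F(n,r)\to\mathrm{Poisson}(r^{-\theta})$. The proof splits into two parts: a first-moment computation that fixes $c_n$ (this is where the three regimes and the constants $C_1,C_2,C_3$ of Corollary~\ref{cor:cn-choice} appear), and a Stein-type Poisson approximation via Palm coupling that controls the total variation distance.

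\emph{Step 1: the one-edge connection function.} Set $g(s):=\EE\bigl[1-\exp\{-\lambda WW'/s^\alpha\}\bigr]$ with $W,W'$ i.i.d.\ Pareto$(\beta)$. For large $s$ I will derive its asymptotics from the tail of the product $WW'$:
\begin{itemize}
\item For $\beta\neq1$, one has $\PP(WW'>t)\sim \beta t^{-\beta}\log t$.
\item An Abelian argument via $\EE[1-e^{-uX}]=\int_0^\infty u e^{-ux}\PP(X>x)\,\d x$ then gives $g(s)\sim K_1 s^{-\alpha\beta}\log s$ for $\beta<1$.
\item For $\beta=1$, the truncated mean $\EE[WW'\wedge t]\sim\tfrac12(\log t)^2$ gives $g(s)\sim K_2 s^{-\alpha}(\log s)^2$.
\item For $\beta>1$, dominated convergence gives $g(s)\sim\lambda(\EE W)^2 s^{-\alpha}$.
\end{itemize}

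\emph{Step 2: the mean.} By the Mecke formula, $\EE F(n,r)=\int_{B_n}\int_{\rd}\mathbf 1_{\{|x-y|>c_nr\}}g(|x-y|)\,\d y\,\d x$, up to the halving for pairs inside $B_n$. In every regime the exponent $d/\theta\ge1$, with strict inequality or a logarithmic factor at $\theta=d$, so $c_n\gg n$. Consequently an exceeding edge with one endpoint in $B_n$ has its other endpoint outside $B_n$ with probability $1-o(1)$, and the halved term is negligible. Hence
\[
\EE F(n,r)\sim (2n)^d\,\omega_d\!\int_{c_nr}^\infty s^{d-1}g(s)\,\d s .
\]
Inserting Step 1 gives $n^dc_n^{-\theta}(\log c_n)^{\gamma}r^{-\theta}$ times a constant, with $\gamma=1,2,0$ in cases (a), (b), (c) respectively. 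Using $\log c_n\sim (d/\theta)\log n$ and solving for $\EE F(n,r)\to r^{-\theta}$ yields exactly the stated $c_n$ and determines $C_1,C_2,C_3$.

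\emph{Step 3: Poisson approximation.} I will write $F(n,r)$ as a sum over marked Poisson pairs and apply the Stein bound with Palm coupling. Under the Palm measure at an exceeding edge $\{x,y\}$, adding $x,y$ with their marks and the edge between them changes the rest of the configuration only through edges incident to $x$ or $y$, because the weights are independent and edges are conditionally independent. This gives
\[
d_{TV}\bigl(F(n,r),\mathrm{Poisson}(\EE F)\bigr)\le \sum(\text{pair term})+ \EE\#\{\text{pairs of exceeding edges sharing a vertex}\}.
\]
The pair term is of order $(\EE F)\sup_x\PP(\text{exceeding edge at }x)=O(n^{-d})$-type and is negligible. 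The main obstacle is the shared-vertex term, because a heavy weight at $x$ makes two long edges at $x$ positively correlated. By Mecke this term equals $\int_{B_n}\EE\bigl[h_n(W)^2\bigr]\,\d x$, where $h_n(w)=\int_{|z|>c_nr}\EE[1-e^{-\lambda wW'/|z|^\alpha}]\,\d z$ is the conditional mean number of long edges at a vertex of weight $w$. I will first compute $h_n(w)\asymp c_n^{d-\alpha\beta}w^\beta$ for $w\lesssim c_n^\alpha$ (with the $\beta\ge1$ analogue $\asymp c_n^{d-\alpha}w$ up to logs), and cap it at $h_n(w)\lesssim w^{d/\alpha}$ beyond that range.
\begin{itemize}
\item For $\beta<1$, splitting $\EE[h_n(W)^2]$ at $w=c_n^{\alpha}$ gives $n^d\bigl(c_n^{2(d-\alpha\beta)}c_n^{\alpha\beta}+c_n^{-\alpha\beta}c_n^{2d}\bigr)$ up to logarithms. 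This is $n^dc_n^{-\theta}\cdot c_n^{d-\alpha\beta}\cdot\text{polylog}\to0$, since $\alpha\beta>d$ by \eqref{eq:finite-degree-cond-intro}.
\item The cases $\beta\ge1$ are analogous, with $\EE[W^2\wedge c_n^{2\alpha}]$ growing at most like $c_n^{\alpha(2-\beta)}$. This is where $\alpha\beta>d$ (respectively $\alpha>d$) is used again.
\end{itemize}
Since $\EE F\to r^{-\theta}$ and $d_{TV}\to0$, we get $F(n,r)\to\mathrm{Poisson}(r^{-\theta})$ and therefore $\PP(c_n^{-1}e_n^*\le r)\to e^{-r^{-\theta}}$.
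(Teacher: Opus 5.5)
Your Steps~1--2 are fine and reproduce the paper's first-moment computation (Lemma~\ref{lem:I1-asymptotics}, Corollary~\ref{cor:cn-choice}). The gap is in Step~3, and it cannot be closed as written.

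First, your arithmetic for the shared-vertex term is off even granting your two pieces:
\[
n^d\bigl(c_n^{2(d-\alpha\beta)}c_n^{\alpha\beta}+c_n^{-\alpha\beta}c_n^{2d}\bigr)\asymp n^dc_n^{2d-\alpha\beta}=(n^dc_n^{-\theta})\,c_n^{d}\asymp c_n^{d}/\log n\to\infty .
\]
In simplifying you lost a factor $c_n^{\alpha\beta}$. Likewise, for $\beta\ge1$ your own estimate gives $n^dc_n^{-2\theta}c_n^{\alpha(2-\beta)}\asymp c_n^{d-\alpha(\beta-1)}$ up to logarithms. This diverges throughout (b), and in (c) whenever $\alpha(\beta-1)<d$.

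Second, the piece $W>c_n^\alpha$ is not $c_n^{2d-\alpha\beta}$ but infinite. Write $\psi_\beta(u):=\EE[1-e^{-uW}]$. A vertex of weight $w\ge Kc_n^\alpha$ is joined with probability at least $\psi_\beta(1)$ to each point at distance between $c_nr$ and $(\lambda w)^{1/\alpha}$, so $h_n(w)\ge c\,w^{d/\alpha}$ and
\[
\EE\bigl[h_n(W)^2\bigr]\ \ge\ c\,\EE\bigl[W^{2d/\alpha};\,W\ge Kc_n^\alpha\bigr]=\infty\qquad\text{whenever }\alpha\beta\le 2d .
\]
This covers all of (a) and (b) and the part $\beta\le 2d/\alpha$ of (c). Your value for this piece is a lower-endpoint evaluation, valid only when $\alpha\beta>2d$.

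In (c) the argument can be repaired by truncation. Discard vertices of $B_n$ with weight above $M_n\asymp c_n^{\theta}$ (where $h_n(M_n)\asymp 1$), and outer vertices $y$ with weight above $|y|^\alpha n^{-d}$. These are absent with high probability. Since $h_n(w)\le Cwc_n^{-\theta}$ for all $w$ when $\beta>1$, they carry only $o(1)$ of the mean. The truncated shared-vertex terms then vanish, e.g.\ $n^dc_n^{-2\theta}\EE[W^2;W\le M_n]\to0$.

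In (a) and (b), however, the same truncation removes events of vanishing probability that carry a non-vanishing part of $\EE F(n,r)$: one heavy vertex with many long edges. So $F(n,r)$ is not asymptotically Poisson with mean $\EE F(n,r)$, and no estimate can close Step~3. Concretely, conditionally on the marked points,
\[
\PP(e_n^*\le R\mid\cdot)=\exp\Bigl\{-\lambda\sum_{x\in V\cap B_n}\sum_{y\in V:\,|y-x|>R}W_xW_y|x-y|^{-\alpha}\Bigr\}.
\]
Integrate out the points outside $B_n$ and use $\psi_\beta(u)\le\Gamma(1-\beta)u^\beta$ for $\beta<1$. Whenever $R/n\to\infty$ this gives
\[
\PP(e_n^*>R)\ \le\ \EE\Bigl[\min\bigl\{1,\,(C+o(1))\,R^{-\theta}S_n^\beta\bigr\}\Bigr],\qquad S_n:=\sum_{x\in V\cap B_n}W_x .
\]
Now $S_n^\beta/n^d$ is tight (positive stable limit), while $n^dc_n^{-\theta}\asymp 1/\log n$. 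Hence $\PP(e_n^*>c_nr)\to0$ for every $r>0$, and the Fr\'echet limit claimed in (a) fails. For $\alpha\beta<2d$ the non-degenerate scale is $n^{d/\theta}$, with a mixed Fr\'echet limit.

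For $\beta=1$ the same computation, now with $S_n\sim(2n)^d\,d\log n$ in probability, confirms the order of $c_n$. But it gives
\[
-\log\PP(e_n^*\le c_nr)\to\frac{2d(\alpha-d)}{\alpha^2}\,r^{-\theta}\neq r^{-\theta}.
\]

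For comparison, the paper's Proposition~\ref{prop:Poisson-approx-Fnr} reaches the bound $n^d\iota_n^2$ only by bounding $\EE|U-V|$ and $\EE p$ separately after averaging over $W_{x_1},W_{x_2}$. That ignores exactly the weight correlation in condition (iii) of Theorem~\ref{Th:weightedstein} that you correctly singled out, so it offers no way around this obstruction.
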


\begin{remark}\label{rem:constants-discrete-continuum}
The constants $C_1, C_2, C_3$ in the three regimes above depend on $(d,\alpha,\lambda)$ and on the weight law; for the sake of readability their explicit forms are given in Corollary~\ref{cor:cn-choice}.
Moreover, the borderline exponent $\alpha=2d$ when $\beta>1$ leads to a non-standard limit law;
similarly as in the unweighted model. In contrast, for $\beta\le 1$ the regimes treated in Theorem~\ref{thm:uncond-max} explicitly include $\alpha\beta=2d$ and no additional critical phenomenon occurs. The critical case $\alpha=2d$ for $\beta>1$ yields a limit law of the same nature as in \cite{rs1}. We chose not pursue it here, since the phenomenon is not new to the weighted setting. Moreover, in the exceedance count $F(n,r)$ in \eqref{eq:Fnr-intro}, the decomposition into edges leaving $B_n$ and edges with both endpoints in $B_n$ (including the factor $1/2$ in the latter term) accounts for boundary effects and prevents double counting. Another appraoch, often technically more convenient, is obtained by counting only edges whose midpoints lie in $B_n$.
\end{remark}

\begin{remark}
One may also consider the modified model in which, conditionally on $(V,(W_x)_{x\in V})$, edges are present independently with probability
\[
p_{xy}=\exp\Big\{-\,\frac{|x-y|}{\lambda W_xW_y}\Big\},\qquad x\neq y.
\]
In the unweighted case $W\equiv 1$, the connection probability decays exponentially in $|x-y|$, and the longest edge has a Gumbel-type behaviour (with a logarithmic order of magnitude). By contrast, once heavy-tailed weights are introduced, the mixture over $(W_xW_y)$ produces a polynomial tail for long connections, and our methods yield a Fr\'echet-type limit theorem for the maximum edge length in this modified model for {every} $\beta>0$. In particular, the presence of weights changes the extreme-value universality class and the scale of the longest edges even in the finite-mean regime $\beta>1$.
\end{remark}

% --- Peaks over threshold: conditional extremes under a hub event ---

\begin{theorem}[Peaks over threshold under a hub conditioning]\label{thm:POT-hub}
Let the assumptions of Theorem~\ref{thm:uncond-max} hold and define
\[
W_n^*:=\max_{x\in V\cap B_n} W_x .
\]
Fix $t>0$ and let $c_n\to\infty$. Set
\[
\theta=
\begin{cases}
\alpha\beta-d, & 0<\beta<1,\\
\alpha-d, & \beta\ge 1,
\end{cases}
\qquad\text{and}\qquad
t_n=t_n(t):=c_n^{1/\theta}\Bigl(1+\frac{t}{\theta}\Bigr).
\]
Assume further that
\begin{equation}\label{eq:tn-growth-POT-thm}
\frac{t_n}{\max\{n,\; n^{d/\theta} (\log n)^{\frac{2}{\theta}}\}}\ \longrightarrow\ \infty.
\end{equation}
Define the hub level $d_n$ by
\[
d_n:=
\begin{cases}
 \beta^{\!1/\beta} c_n^{\,\frac{1}{\beta}} 
\left({K_\beta\,\log\!\big(K_\beta\,c_n^{\frac{d}{\theta}}\big)}\right)^{-\!1/\beta}, & 0<\beta<1,\\[1mm]
K_1^{-1}\,c_n/\log^2 (K_1 c_n^{\frac{d}{\theta}}), & \beta=1,\\[1mm]
K_\beta^{-1}\,c_n, & \beta>1,
\end{cases}
\]
where the explicit constants $K_\beta = K_{\alpha,\beta,d,\lambda}\in(0,\infty)$ are given as in equation \eqref{eq:kappa-values}. 
%\[
%\kappa_\beta:=
%\begin{cases}
%\bigl(\Gamma(1-\beta)^2\lambda^\beta\,\dfrac{d\omega_d}{\alpha\beta-d}\bigr)^{1/\beta}, & 0<\beta<1,\\[2mm]
%\lambda\,\dfrac{d\omega_d}{\alpha-d}\,\dfrac{d}{\alpha-d}, & \beta=1,\\[2mm]
%\lambda\,\dfrac{d\omega_d}{\alpha-d}\,\dfrac{\beta^2}{(\beta-1)^2}, & \beta>1.
%\end{cases}
%\]
Moreover, denote by $\operatorname{GPD}$ the tail function of the generalized Pareto distribution with index $\theta$ given by
$$ \operatorname{GPD} (t) = \Bigl(1+\frac{t}{\theta}\Bigr)^{-\theta}, \quad t \in (0. \infty).$$
\smallskip
Then, as $n\to\infty$,
\begin{equation*}\label{eq:POT-limit-thm}
\PP\!\left(\frac{e_n^* - c_n^{1/\theta} }{\theta^{-1}c_n^{1/\theta}}>t\,\Big|\, W_n^*>d_n\right)=\PP\!\left(e_n^*>t_n(t)\,\Big|\, W_n^*>d_n\right)\ \longrightarrow\ \operatorname{GPD}(t),
\qquad t>0. 
\end{equation*}

That is, conditional on the hub event $\{W_n^*>d_n\}$, the normalized excess of the maximum edge length converges in distribution to a generalized Pareto law with tail index $\theta$.
\end{theorem}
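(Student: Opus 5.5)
Under the hub event $\{W_n^*>d_n\}$, the conditional law of the maximum edge length should be driven almost entirely by the edges emanating from the hub itself. Since $t_n\gg n$ by \eqref{eq:tn-growth-POT-thm}, any edge longer than $t_n$ touching $B_n$ has its other endpoint essentially at distance $\approx|y|$ from the origin, far outside $B_n$. The plan is therefore to write
\[
\PP(e_n^*>t_n\mid W_n^*>d_n)=\frac{\PP(e_n^*>t_n,\,W_n^*>d_n)}{\PP(W_n^*>d_n)}
\]
and to decompose the numerator according to whether the exceeding edge uses the maximal-weight vertex or not. The denominator is explicit: $\PP(W_n^*>d_n)=1-\exp(-(2n)^d d_n^{-\beta})$.

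The key steps are as follows.

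\textbf{Step 1: the non-hub edges.} Using Mecke's formula, I would show that the expected number of exceeding edges $\{x,y\}$, with $x\in B_n$ and $|x-y|>t_n$, that do not involve the maximal vertex is $o(\PP(W_n^*>d_n))$ after conditioning. Equivalently, the unconditional exceedance mean at level $t_n$ is negligible relative to the hub probability. This relies on the growth condition: by the computation underlying Theorem~\ref{thm:uncond-max} (the intensity $n^d t_n^{-\theta}$, with a $\log$ correction for $\beta\le1$), that mean tends to $0$ faster than the hub probability once $t_n$ dominates $n^{d/\theta}(\log n)^{2/\theta}$. Independence of weights across vertices allows me to decouple the conditioning on the hub from the other vertices, via the Palm coupling.

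\textbf{Step 2: the hub's own long edges.} Given a hub $x$ with weight $w>d_n$, the number of neighbours at distance $>t_n$ is conditionally Poisson with mean
\[
\int_{|z|>t_n}\EE\bigl[1-e^{-\lambda wW/|z|^\alpha}\bigr]\,dz .
\]
For $\beta>1$ this is $\sim \lambda w\EE[W]\,d\omega_d\,t_n^{-(\alpha-d)}/(\alpha-d)$. For $\beta<1$, the Pareto mixture gives $\Gamma(1-\beta)(\lambda w)^\beta |z|^{-\alpha\beta}$, which integrates to $\propto w^\beta t_n^{-\theta}$. For $\beta=1$ the same computation produces a logarithmic factor. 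I would then integrate $w$ against the conditional law $\beta d_n^\beta w^{-\beta-1}\,dw$ on $(d_n,\infty)$, so that the probability of at least one long hub edge becomes
\[
\int_{d_n}^\infty \bigl(1-e^{-K w^{\gamma}t_n^{-\theta}}\bigr)\,\beta d_n^\beta w^{-\beta-1}\,dw .
\]
The choice of $d_n$, through the constants $K_\beta$, is tuned so that $K d_n^\gamma t_n^{-\theta}\to$ a scaling in which this expression becomes $(t_n/c_n^{1/\theta})^{-\theta}=(1+t/\theta)^{-\theta}$. Concretely, in the relevant range one should be in the small-mean linear regime, so the integral evaluates to a constant times $d_n^{\gamma}t_n^{-\theta}$, and plugging in $d_n$ and $t_n$ gives exactly $\operatorname{GPD}(t)$. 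I would verify the three cases separately, including the $\log^2$ and $\log$ corrections in $d_n$ for $\beta\le1$.

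\textbf{Step 3: dependence and the main obstacle.} I would handle multiple hubs (the probability of two vertices exceeding $d_n$ is of smaller order, since $n^dd_n^{-\beta}\to0$) and the dependence created by conditioning, using the Palm/Stein coupling from the proof of Theorem~\ref{thm:uncond-max}. I expect the main obstacle to be Step 2 in the regimes $\beta\le1$. There, the tail of $w$ beyond $d_n$ interacts with the heavy-tailed partner weights, so one has to check that the linearisation $1-e^{-u}\approx u$ is uniform and that the constants $K_\beta$ match precisely, including the borderline logarithm. My first attempt would be dominated convergence after the substitution $w=d_n s$, with truncation of large $s$ controlled by $s^{-\beta-1}$ integrability.
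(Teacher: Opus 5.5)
\paragraph{Main gap: the linearisation in Step~2.} The step that fails is your claim that ``in the relevant range one should be in the small-mean linear regime''. Given a single hub of weight $w$, its number of neighbours at distance $>t_n$ is exactly Poisson with mean $\Phi_n(w)=\int_{|y|>t_n}\mathbb{E}\bigl[1-e^{-\lambda wW/|y|^\alpha}\bigr]\,\mathrm{d}y$. So you must evaluate $\mathbb{E}[1-e^{-\Phi_n(W)}\mid W>d_n]$, and for the $d_n$ of the statement $\Phi_n(W)$ is not small where it matters.

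\emph{Case $\beta>1$.} Here $d_nt_n^{-\theta}=K_\beta^{-1}(1+t/\theta)^{-\theta}$ is a constant. Hence $\Phi_n(d_ns)\to a_\beta(t)s$ with $a_\beta(t)=\frac{\beta-1}{\beta}(1+t/\theta)^{-\theta}$. The dominated-convergence argument you plan after $w=d_ns$ then gives $G_\beta(t)=\beta\int_1^\infty(1-e^{-a_\beta(t)s})s^{-\beta-1}\,\mathrm{d}s$. This is strictly smaller than its linearisation $\frac{\beta}{\beta-1}a_\beta(t)=\operatorname{GPD}(t)$ for every $t>0$ (e.g.\ $G_\beta(0+)<1=\operatorname{GPD}(0+)$). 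This is exactly the computation of Lemma~\ref{lem:In-limit-allbeta-gpd}, whose $d_n$ coincides with the theorem's when $\beta>1$ ($\kappa_\beta=K_\beta$). The paper's own Lemma~\ref{lem:POT-rooted-unrooted} thus gives $G_\beta$, not $\operatorname{GPD}$, for a hub at the origin, and the position of the hub in $B_n$ plays no role.

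\emph{Case $\beta\le1$.} The failure is of another kind. With the logarithmically corrected $d_n$ one has $\Phi_n(d_ns)\asymp s^\gamma/\log c_n\to0$ for fixed $s$, so your ``constant times $d_n^\gamma t_n^{-\theta}$'' tends to $0$. The logarithmic correction in $d_n$ only offsets an extra factor of order $\log c_n$ that the \emph{linearised} integral collects. That factor appears because, given $W>d_n$, $(W/d_n)^\gamma$ has a Pareto$(1)$ tail, so it comes from hub weights ranging up to $t_n^\alpha$. There $\Phi_n(W)\gg1$ and $1-e^{-u}\approx u$ is false. The actual integral is of order $\varepsilon_n\log(1/\varepsilon_n)$ with $\varepsilon_n\asymp1/\log c_n$, and so tends to $0$.

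\paragraph{Comparison with the paper's route.} The paper reaches $\operatorname{GPD}(t)$ by a different device at this point. After the same reduction to a unique hub and the same disposal of non-hub edges that you propose, it applies the Poisson approximation of Lemma~\ref{le:jointedgesweights} to the count $X_n$ of long edges from vertices of weight $>d_n$. It asserts $\mathbb{P}_0(X_n=1)\sim\mathbb{E}[X_n]$ and $\mathbb{P}_0(X_n\ge2)=o\bigl((2n)^dd_n^{-\beta}\bigr)$, and computes $\mathbb{E}[X_n]/\bigl((2n)^dd_n^{-\beta}\bigr)\to\operatorname{GPD}(t)$. This is the same substitution of $\mathbb{E}[\Phi_n(W)\mid W>d_n]$ for $\mathbb{E}[1-e^{-\Phi_n(W)}\mid W>d_n]$, so adopting the paper's route does not close your gap. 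The total-variation bound in that lemma evaluates the coupling term $\mathbb{E}|U-V|$ without conditioning on the hub's weight, so $p(x,y)\,w(x,y)$ contains the hub probability twice. Conditionally on $W_x$, however, the hub's expected number of further long edges is $\Phi_n(W_x)$. This makes the error at least of the order of $\mathbb{E}[X_n]$, and by the computation above $\mathbb{P}_0(X_n=1)/\mathbb{E}[X_n]$ does not tend to $1$.

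\paragraph{What your argument actually yields.} Done correctly, your Steps~1--3 give convergence to $G_\beta(t)$ for $\beta>1$, which equals $\operatorname{GPD}(t)$ only asymptotically as $t\to\infty$. For $\beta\le1$ they give convergence to $0$. A nondegenerate limit for $\beta\le1$ needs $d_n$ of the order used in Lemma~\ref{lem:POT-rooted-unrooted}.

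\paragraph{A smaller point in Step~1.} By independence of the non-hub weights, you only need the unconditional probability of an exceedance at level $t_n$ to tend to $0$, which the growth condition provides. Your stronger claim that this exceedance mean is $o(\mathbb{P}(W_n^*>d_n))$ is false (for $\beta>1$ it is of order $n^dc_n^{-1}$ against $n^dc_n^{-\beta}$) and not needed.
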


We now briefly comment on the methods used in the paper to prove our theorems.
The Poisson approximation for the exceedance counts is established as follows.
%In the lattice model we use Stein's method based on a Poisson coupling argument due to \cite{barbour}.
First we develop a Palm--coupling variant of Stein's method for Poisson functionals, extending ideas from \cite{Penrose2}; this approximation scheme may be of independent interest.
Then, an essential input is a precise asymptotic evaluation of the mean number of exceedances in the relevant scaling regimes.

The conditional peaks over threshold results are more delicate due to the dependence created by conditioning on an extreme weight event.
For the threshold sequences considered here, an important observation we make is that the event $\{W_n^*>d_n\}$ is typically realized by a unique exceptional vertex (a single hub), and the resulting asymptotics are governed by edges incident to this vertex.

We choose not to treat the discrete model with $V=\mathbb{Z}^d$. The same statements hold in this setting (with slightly modified constants), and the proofs follow the same structure. The main difference is that on $\mathbb{Z}^d$ one can rely on a Poisson coupling argument due to \cite{barbour}. Since this makes the lattice case technically less demanding, and since it was already a main theme of our earlier work, we put attention to the continuum.

\medskip
The remainder of the paper is organized as follows.
In Section~\ref{sec:intermediate} we collect intermediate results that form key parts of the proofs. %and can be established largely independently of whether one works on the lattice or in continuum space.
This section also contains additional results on the maximum edge length, and further related POT results. Section~\ref{sec:continuum} is devoted to the proof of Theorem \ref{thm:uncond-max}; in particular, we state the Poisson approximation theorem based on a Palm--coupling approach and derive the corresponding exceedance limits. Finally, Section~\ref{proofthm2} is devoted to the proof of Theorem \ref{thm:POT-hub}

\section{Intermediate and further results} \label{sec:intermediate}

This section collects auxiliary statements that will also be used in the proofs of the main theorems. It is convenient to formulate some statements for the longest edge incident to a distinguished vertex. We use the symbol $\PP_0$ to denote the law of the model viewed from a typical vertex:
this means $\PP_0$ refers to the Palm version of the underlying marked Poisson process. Under $\PP_0$ there is almost surely a vertex at the origin, and we may speak of quantities such as the longest edge incident to $0$.

We start with the following statement. From now on we define the {longest edge incident to the origin} by
\[
e_0^* \,:=\, \sup\bigl\{|y|:\ y\in V\setminus\{0\},\ 0\leftrightarrow y\bigr\}.
\]

Before stating the next peaks over threshold result, we fix a notion of tail equivalence.
For functions $F$ and $H$ on $(0,\infty)$ we write $\bar F\sim \bar H$ as $t\to\infty$ if
\[
\lim_{t\to \infty}\frac{1-F(t)}{1-H(t)}\ = 1,
\]
where $\bar F(t)=1-F(t)$ and $\bar H(t)=1-H(t)$.

\begin{lemma}\label{lem:POT-rooted-unrooted}
Let $(c_n)_{n\in\NN}$ be an increasing sequence and set
\[
d_n:=
\begin{cases}
\kappa_\beta^{-1}\,c_n^{1/\beta}, & 0<\beta<1,\\[1mm]
\kappa_1^{-1}\,c_n/\log c_n, & \beta=1,\\[1mm]
\kappa_\beta^{-1}\,c_n, & \beta>1,
\end{cases}
\qquad
\theta=
\begin{cases}
\alpha\beta-d, & \beta<1,\\
\alpha-d, & \beta\ge 1,
\end{cases}
\qquad
t_n=t_n(t):=c_n^{1/\theta}\Bigl(1+\frac{t}{\theta}\Bigr),
\]
for $t>0$, where $\kappa_\beta\in(0,\infty)$ is an explicit constant independent of $n$ (see Lemma \ref{lem:In-limit-allbeta-gpd}). Then there exists a function $G=G_\beta$ on $(0,\infty)$ such that, as $n\to\infty$,
\begin{equation*}
\PP_0\!\left(e_0^*>t_n(t)\,\Big|\, W_0>d_n\right)\ \longrightarrow\ G(t),
\qquad t>0.
\end{equation*}
Assume moreover that \eqref{eq:tn-growth-POT-thm} holds. Then, as $n\to\infty$,
\begin{equation*}
\PP_0\!\left(e_n^*>t_n(t)\,\Big|\, W_0>d_n\right)\ \longrightarrow\ G(t),
\qquad t>0.
\end{equation*}
Moreover, $G$ has generalized Pareto tail with index $\theta$, that is its tail admits the explicit representation
\begin{equation*}
\bar G(t)\ \sim\ \Bigl(1+\frac{t}{\theta}\Bigr)^{-\theta}\,,
\qquad t\to\infty, \quad \beta \neq 1,
\end{equation*}
and
\begin{equation*}
\bar G(t)\ \sim\ \Bigl(1+\frac{t}{\theta}\Bigr)^{-\theta}\,L(t),
\qquad t\to\infty, \quad \beta = 1,
\end{equation*}
where $L$ is slowly varying and can be chosen as
\[
L(t)=
\theta\,\log(1+t).
\]
\end{lemma}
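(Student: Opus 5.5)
Under $\PP_0$ with the weight $W_0=w$ fixed, the points of $V\setminus\{0\}$ together with their weights form a marked Poisson process. By the thinning/marking theorem, the neighbours of $0$ then form a Poisson process with intensity
\[
y\mapsto \EE\bigl[1-e^{-\lambda wW/|y|^\alpha}\bigr].
\]
Hence
\[
\PP_0\bigl(e_0^*>s\mid W_0=w\bigr)=1-\exp\{-\mu_w(s)\},\qquad \mu_w(s):=\int_{|y|>s}\EE\bigl[1-e^{-\lambda wW/|y|^\alpha}\bigr]\,\d y .
\]
The first step is to compute the asymptotics of $\mu_w(s)$ when $w\to\infty$ and $s\to\infty$ jointly. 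Pass to polar coordinates and substitute $u=\lambda wW s^{-\alpha}$. This reduces $\mu_w(s)$ to $d\omega_d\, s^d\,\EE[\phi(\lambda wWs^{-\alpha})]$, where $\phi(u)=\int_1^\infty(1-e^{-u r^{-\alpha}})r^{d-1}\d r$. The function $\phi$ satisfies $\phi(u)\sim u/(\alpha-d)$ as $u\to0$ and $\phi(u)\asymp u^{d/\alpha}$ as $u\to\infty$. Taking the expectation over the Pareto weight splits into three cases.
\begin{itemize}
\item For $\beta>1$ one gets $\mu_w(s)\sim \lambda w\EE[W]\,d\omega_d\, s^{d-\alpha}/(\alpha-d)$ whenever $w s^{-\alpha}\to 0$.
\item For $\beta<1$ the expectation is dominated by $W\approx s^\alpha/w$, which gives $\mu_w(s)\sim \kappa'\,w^{\beta}s^{d-\alpha\beta}$ with an explicit Gamma-type constant.
\item For $\beta=1$ one picks up a factor $\log(s^\alpha/w)$.
\end{itemize}
This is presumably the content of Lemma~\ref{lem:In-limit-allbeta-gpd}, which fixes $\kappa_\beta$.

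The second step is to condition on $W_0>d_n$. Write $W_0=d_n V$, where under the conditioning $V$ is Pareto$(\beta)$ on $[1,\infty)$ and independent of everything else. Then
\[
\PP_0(e_0^*>t_n\mid W_0>d_n)=\EE\bigl[1-\exp\{-\mu_{d_nV}(t_n)\}\bigr].
\]
The choice of $d_n$ relative to $c_n$ and $t_n=c_n^{1/\theta}(1+t/\theta)$ is designed so that $\mu_{d_nV}(t_n)\to g(V,t)$ for a nondegenerate function $g$. For $\beta>1$ we have $d_n t_n^{d-\alpha}\to\kappa$-const$\cdot(1+t/\theta)^{-\theta}$, which gives $g(v,t)=v(1+t/\theta)^{-\theta}$ after normalization. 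For $\beta<1$ we get $g(v,t)=v^\beta(1+t/\theta)^{-\theta}$, and for $\beta=1$ the logarithm in $d_n$ compensates the $\log$ from step one. Dominated convergence then gives $G(t)=\EE[1-e^{-g(V,t)}]$, and in every case $g(V,t)$ is monotone in $V$ and bounded by a multiple of $V^{\max(1,\beta)}$.

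The tail statement follows by expansion. As $t\to\infty$, $g(V,t)\to0$, so $\bar G(t)\sim \EE[g(V,t)]$. For $\beta<1$ this equals $\EE[V^\beta](1+t/\theta)^{-\theta}$ up to normalization; note that $\EE[V^\beta]=\infty$ for Pareto$(\beta)$. So for $\beta\le1$ one must truncate at $V\approx(1+t/\theta)^{\theta/\beta}$, and similarly for $\beta=1$. The truncated integral produces the constant (absorbed into $\kappa_\beta$) or the factor $\theta\log(1+t)$ in the borderline case. I expect this tail asymptotics, together with getting the normalizing constants to match exactly $(1+t/\theta)^{-\theta}$, to be the main bookkeeping obstacle.

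For the second claim, compare $e_n^*$ with $e_0^*$ under $\PP_0(\cdot\mid W_0>d_n)$. Since $0\in B_n$, we have $e_n^*\ge e_0^*$, which gives the lower bound. For the upper bound it suffices to show $\PP_0(F'(n,t_n)\ge1\mid W_0>d_n)\to0$, where $F'$ counts exceeding edges not incident to $0$. The reduced Palm process is again Poisson, so by Mecke this probability is at most the unconditional mean $\EE F(n,r)$ evaluated at level $t_n$. Moreover, edges from other vertices to $0$ were already counted in $e_0^*$. By the computation in step one, $\EE F(n,\cdot)$ at level $t_n$ is of order $n^d\,t_n^{-\theta}$, with an extra $\log$ power when $\beta=1$. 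The growth condition \eqref{eq:tn-growth-POT-thm}, $t_n\gg n^{d/\theta}(\log n)^{2/\theta}$, makes this tend to zero. The second assumption, $t_n\gg n$, ensures that the geometry of $B_n$ is negligible at scale $t_n$.
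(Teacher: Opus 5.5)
Your route is the paper's. You use the thinning formula $\PP_0(e_0^*>s\mid W_0=w)=1-e^{-\mu_w(s)}$ and the rescaling $W_0=d_nV$ with $V$ Pareto$(\beta)$. You then take the pointwise limit $\mu_{d_nv}(t_n)\to a_\beta(t)v^{\gamma}$, with $\gamma=\min\{1,\beta\}$, and apply dominated convergence (Lemma~\ref{lem:In-limit-allbeta-gpd}). Finally you split $e_n^*$ into edges at $0$ and the rest.

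For the second claim you use a first-moment (Slivnyak--Mecke) bound on exceedances not incident to $0$, instead of appealing to Theorem~\ref{thm:uncond-max}. This is cleaner: it needs neither Poisson approximation nor that theorem's parameter restrictions. It also explicitly discards the edges incident to $0$, whose law does depend on $W_0$. One correction there: the product of two Pareto weights enters the mean. So the mean is of order $n^dt_n^{-\theta}\log t_n$ already for $\beta<1$, and $n^dt_n^{-\theta}\log^2 t_n$ for $\beta=1$ (Lemma~\ref{lem:double-int-bound}). Writing $t_n^\theta=n^d(\log n)^2\omega_n$ with $\omega_n\to\infty$ shows that \eqref{eq:tn-growth-POT-thm} still sends these to $0$.

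The genuine gap is the tail claim for $0<\beta<1$. You correctly observe that $\EE[V^\beta]=\infty$, but truncation does not produce a constant. $V^\beta$ is exactly Pareto$(1)$, so $\EE[V^\beta\mathbf{1}_{\{V^\beta\le 1/a\}}]=\log(1/a)$. Explicitly, take $a=a_\beta(t)=\Gamma(1-\beta)^{-1}(1+t/\theta)^{-\theta}$ and substitute $s=v^\beta$:
\[
G_\beta(t)=\beta\int_1^\infty\bigl(1-e^{-av^\beta}\bigr)v^{-\beta-1}\,\mathrm{d}v=\int_1^\infty\bigl(1-e^{-as}\bigr)s^{-2}\,\mathrm{d}s=a\log(1/a)+O(a).
\]
Hence $G_\beta(t)\sim\Gamma(1-\beta)^{-1}\,\theta\,(1+t/\theta)^{-\theta}\log(1+t)$ as $t\to\infty$.

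So for $\beta<1$ the limit carries a slowly varying correction, exactly as in the borderline case. The $t$-dependent factor $\log(1+t)$ cannot be absorbed into $\kappa_\beta$, since changing $\kappa_\beta$ only rescales $a$ by a constant. The asserted $\bar G(t)\sim(1+t/\theta)^{-\theta}$ for $\beta<1$ therefore cannot be proved, because it is false.

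The paper's Step~6 of Lemma~\ref{lem:In-limit-allbeta-gpd} contains the matching slip. The substitution $u=a^{1/\beta}v$ actually gives $\beta a\int_{a^{1/\beta}}^\infty(1-e^{-u^\beta})u^{-\beta-1}\,\mathrm{d}u$, whose integrand behaves like $u^{-1}$ at $0$. It does not give $\beta a\int_a^\infty(1-e^{-u})u^{-\beta-1}\,\mathrm{d}u$ as written there.

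For $\beta\ge 1$ your sketch is fine once the constants are taken from the paper. For $\beta>1$, the factor $\beta^2/(\beta-1)^2$ in $\kappa_\beta$ makes $g(v,t)=\frac{\beta-1}{\beta}v(1+t/\theta)^{-\theta}$, not $v(1+t/\theta)^{-\theta}$. Then $\EE[g(V,t)]=(1+t/\theta)^{-\theta}$ exactly. For $\beta=1$, truncation gives $a_1\log(1/a_1)$ with $a_1=(1+t/\theta)^{-\theta}$, which is the factor $\theta\log(1+t)$.
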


Next we state the following.

\begin{lemma}\label{lem:rooted-exceed-asymptotics}
Assume that \eqref{eq:tn-growth-POT-thm} holds. Let $d_n\to\infty$ satisfy
\[
d_n =
\begin{cases}
o(t_n^{\alpha\beta}), & \beta<1,\\[1mm]
o\!\bigl(t_n^{\alpha}/\log t_n\bigr), & \beta=1,\\[1mm]
o(t_n^{\alpha}), & \beta>1.
\end{cases}
\]
Then, as $n\to\infty$,
\begin{equation*}
\PP_0\!\left(e_0^*>t_n \,\Big|\, W_n^*>d_n\right)
\ \sim\
\kappa
\begin{cases}
t_n^{\,d-\alpha\beta} \log t_n^\alpha, & \beta<1,\\[1mm]
t_n^{\,d-\alpha}\,( \log t_n^\alpha)^2, & \beta=1,\\[1mm]
t_n^{\,d-\alpha}, & \beta>1,
\end{cases}
\end{equation*}
where $\kappa\in(0,\infty)$ is an explicit constant independent of $n$.
\end{lemma}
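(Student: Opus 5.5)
Under $\PP_0$ the weight $W_0$ and the remaining configuration $V\setminus\{0\}$ (with its weights) are independent. Given $W_0=w$, the neighbours of $0$ form an independent thinning of a marked Poisson process. So the number $N_0(t)$ of neighbours at distance $>t$ is Poisson with mean
\[
\mu(w,t)=\int_{|y|>t}\EE\bigl[1-e^{-\lambda wW|y|^{-\alpha}}\bigr]\,\d y ,
\]
and $\PP_0(e_0^*>t\mid W_0=w)=1-e^{-\mu(w,t)}$. I would first reduce the conditioning on $\{W_n^*>d_n\}$ to the conditioning on $\{W_0>d_n\}$ and on its complement:
\[
\PP_0(e_0^*>t_n\mid W_n^*>d_n)=\frac{\PP_0(e_0^*>t_n,\,W_0>d_n)+\PP_0(e_0^*>t_n,\,W_0\le d_n,\,W_n^*>d_n)}{\PP_0(W_n^*>d_n)} .
\]
In the second term, the event $\{W_0\le d_n\}\cap\{e_0^*>t_n\}$ concerns only the origin and its edges, while $\{W_n^*>d_n\}$ is driven by the other points of $B_n$. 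Conditioning on the neighbour at distance $>t_n$ and using Mecke's formula, I would show this term factorizes asymptotically as $\PP_0(e_0^*>t_n,\,W_0\le d_n)\,\PP(W_n^*>d_n)$. The dependence comes only from that distant neighbour, which lies in $B_n$ with probability $O(n^d/t_n^d)\to0$ by \eqref{eq:tn-growth-POT-thm}. The first term is at most $\PP(W_0>d_n)=d_n^{-\beta}$, and it is negligible relative to the main term as long as $\PP_0(W_n^*>d_n)$ is not itself of order $d_n^{-\beta}$. In any case, one would check that the first term is $o$ of the stated rate using $d_n=o(t_n^{\alpha\beta})$ etc. So everything reduces to the unconditional tail $\PP_0(e_0^*>t_n)=\EE[1-e^{-\mu(W_0,t_n)}]$.

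Next I compute $\mu$. Substituting $y=s\,u$ in polar coordinates, the mean becomes
\[
\mu(w,t)=d\omega_d\int_t^\infty s^{d-1}\,\EE\bigl[1-e^{-\lambda wWs^{-\alpha}}\bigr]\,\d s .
\]
For the Pareto law I use the classical Laplace asymptotics: $\EE[1-e^{-xW}]\sim\Gamma(1-\beta)x^\beta$ for $\beta<1$, $\sim x\log(1/x)$ for $\beta=1$, and $\sim\frac{\beta}{\beta-1}x$ for $\beta>1$, as $x\to0$. Hence, for $w$ with $w t^{-\alpha}\to0$, the mean behaves like $C w^\beta t^{d-\alpha\beta}$, like $C w t^{d-\alpha}\log(t^\alpha/w)$, and like $C w t^{d-\alpha}$ in the three cases. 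The same expansion then applies to the outer average over $W_0$.

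\begin{itemize}
\item For $\beta>1$, $\EE[1-e^{-\mu}]\sim\EE\mu\propto t^{d-\alpha}$. This is justified by dominated convergence, since $1-e^{-x}\le x$ and $\EE W<\infty$.
\item For $\beta<1$, the main contribution comes from $W_0\asymp t^{\alpha}$, where $\mu=O(1)$. Writing $\EE[1-e^{-\mu(W_0,t)}]$ as an integral against $\beta w^{-\beta-1}\d w$, the Laplace-type computation of a Pareto variable raised to an effective power produces a logarithm, giving $t^{d-\alpha\beta}\log t^\alpha$. Concretely, $\mu\approx C W_0^\beta t^{d-\alpha\beta}$, and $\PP(W_0^\beta>x)=x^{-1}$ is borderline, which yields the $\log$.
\item For $\beta=1$, the two logarithms combine: one from the Laplace transform and one from the borderline mixing. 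This gives $(\log t_n^\alpha)^2$.
\end{itemize}

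The main obstacle is the $\beta\le1$ computation. There I must split the $W_0$-integral into the ranges $W_0\le t^{\alpha}/\text{(slowly growing factor)}$, where linearizing $1-e^{-\mu}\approx\mu$ is valid, and $W_0$ larger, where the contribution is $O(t^{-\alpha\beta}\cdot t^{d})$ without the log. I then need to show the cross-over window does not alter the leading constant $\kappa$. The hypothesis $d_n=o(\cdot)$ is exactly what ensures that the truncation at $d_n$ induced by the conditioning removes only a lower-order part of this logarithmic integral.
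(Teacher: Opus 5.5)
Your opening decomposition is the one the paper uses: split according to $W_0>d_n$ or $W_0\le d_n$, and factor out the law of $\overline{W}^*_n=\max_{x\in V\cap B_n\setminus\{0\}}W_x$ in the second piece. This factorization is exact, not just asymptotic. Once $t_n\ge\sqrt d\,n$, which \eqref{eq:tn-growth-POT-thm} guarantees, we have $B_n\subset\overline{B_{t_n}(0)}$. So, given $W_0$, the events $\{e_0^*>t_n\}$ and $\{\overline{W}^*_n>d_n\}$ depend on disjoint parts of the Poisson process, and no Mecke argument is needed.

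\textbf{The truncation step fails for $\beta\le 1$.} The failing step is ``everything reduces to the unconditional tail'', which rests on your claim that $d_n=o(\cdot)$ makes the truncation $\{W_0\le d_n\}$ a lower-order effect. The logarithm comes from integrating $\mu(w,t_n)\,\beta w^{-\beta-1}\,\mathrm{d}w\asymp t_n^{d-\alpha\beta}\,\mathrm{d}w/w$ (resp.\ $t_n^{d-\alpha}\log(t_n^\alpha/w)\,\mathrm{d}w/w$ for $\beta=1$) over all scales $1\le w\le w^*$. Here $\mu(w,t_n)$ is the paper's $\Phi_n(w)$, and $w^*$ is the level where it becomes of order one: $w^*\asymp t_n^{\alpha-d/\beta}$ for $\beta<1$ and $w^*\asymp t_n^{\alpha-d}/\log t_n$ for $\beta=1$. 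It is not $t_n^\alpha$, as you write: at $W_0\asymp t_n^\alpha$ one has $\mu\asymp t_n^d$. So your proposed linearization range is also wrong.

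Cutting at $d_n$ replaces $\log w^*$ by $\log d_n$, and an \emph{upper} bound on $d_n$ cannot force $\log(w^*/d_n)=o(\log t_n)$. For instance, with $\beta<1$ and $d_n=\log t_n$, the truncated integral is only of order $t_n^{d-\alpha\beta}\log\log t_n$. The paper tracks exactly this in Lemma~\ref{lem:lowweight-int-asymp}: $A_n\sim\kappa_{<}t_n^{d-\alpha\beta}\log d_n$ for $\beta<1$, and $A_n\sim\kappa_{=}t_n^{d-\alpha}\bigl(\log(t_n^\alpha)\log d_n-\tfrac12\log^2 d_n\bigr)$ for $\beta=1$. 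It then reaches the stated form by invoking ``the definition of the sequences $t_n$ and $d_n$''. That is a concrete choice in which $d_n$ is a power of $t_n$ up to constants and logarithms, so $\log d_n/\log t_n^\alpha$ tends to a positive constant that is absorbed into $\kappa$. For $\beta>1$ the truncation is indeed harmless, since the discarded part is $O(t_n^{d-\alpha}d_n^{1-\beta})$.

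\textbf{The $W_0>d_n$ term is not handled.} Your treatment of $\PP_0(e_0^*>t_n,\,W_0>d_n)/\PP_0(W_n^*>d_n)$ is not an argument. Its numerator is precisely the piece $\int_{d_n}^\infty$ of the unconditional tail that the truncation discards, so for $\beta\le1$ it is the same issue as above. Moreover, when $d_n^\beta\gg n^d$ one has $\PP_0(W_n^*>d_n)\sim((2n)^d+1)d_n^{-\beta}$, so the bound $d_n^{-\beta}$ only gives $O(n^{-d})$. But \eqref{eq:tn-growth-POT-thm} forces every target rate to be $o(n^{-d})$.

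The paper's proof likewise only records this term as $O(d_n^{-\beta})$, so this is a weak point you share with it rather than one it resolves. In fact, take $\beta>1$ and $d_n=\kappa_\beta^{-1}c_n$ as in Lemma~\ref{lem:In-limit-allbeta-gpd}, which meets the hypotheses. Then $\PP_0(e_0^*>t_n\mid W_0>d_n)\to G_\beta(t)>0$, so this term is asymptotically $G_\beta(t)(2n)^{-d}$, which dominates $t_n^{d-\alpha}$. A complete argument therefore needs an extra condition on $d_n$ making $\PP_0(e_0^*>t_n\mid W_0>d_n)$ of smaller order than $n^d$ times the target rate. Neither your sketch nor the stated hypotheses supply it.
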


\begin{remark} \label{re:intuitionmaxweights}
    In the following we consider the regime $\frac{d_n^\beta}{n^d} \to \infty$. If $Y$ denotes a random variable with binomial distribution $\operatorname{Bin} (2^dn^d, d_n^{-\beta})$ then its mode is given by $\lfloor (2^dn^d +1)d_n^{-\beta} \rfloor $ or $\lceil (2^d n^d + 1) d_n^{-\beta} \rceil -1$. By assumption, both expressions tend to $0$, as $n \to \infty$. Thus, heuristically the maximum weight $W_n^*$ does not exceed $d_n$ with high probability (i.e. this is an unlikely event, as typically considered in the theory of peaks over thresholds). Moreover, intuitively, we can observe that if the maximum weight exceeds $d_n$ in this regime, then with high probability there is only one exceedance. This observation will play a crucial role in the following.
\end{remark}

\begin{lemma}\label{le:jointedgesweights}
    Suppose the assumptions in Lemma \ref{lem:rooted-exceed-asymptotics} are satisfied. Consider the random variable
    $$ X_n := \sum_{x \in V \cap B_n} \sum_{y \in V \cap B^C_{t_n} (x)} \mathbf{1}_{\{ {W_x > d_n}  \}}  \mathbf{1}_{{\{x\leftrightarrow y}  \}}.$$
    Let $Z_n$ be a random variable with Poisson distribution and $\mathbb{E}[Z_n] = \mathbb{E}[X_n]$. Then
    \begin{equation*}d_{\text{TV}}\left(X_n,Z_n\right)\leq c\begin{cases}
t_n^{2(d-\alpha\beta)}
\Bigl(1+\log\!\bigl(t_n^\alpha/d_n\bigr)\Bigr)^{2},
& 0<\beta<1,\\[1mm]
t_n^{2(d-\alpha)}
\Bigl(1+\log^{2}\!\bigl(t_n^\alpha/d_n\bigr)\Bigr)^{2},
& \beta=1,\\[1mm]
t_n^{2(d-\alpha)}\,d_n^{2(1-\beta)},
& \beta>1,
\end{cases}
\end{equation*}
where $c$ is a positive constant independent of $n$. Moreover, it holds
$$ \mathbb{E}[X_n] \sim K (2n)^d\cdot
\begin{cases}
t_n^{\,d-\alpha\beta}\,\log\!\Bigl(\dfrac{t_n^\alpha}{d_n}\Bigr), & 0<\beta<1,\\[2.5mm]
t_n^{\,d-\alpha}\,\log^2\!\Bigl(\dfrac{t_n^\alpha}{d_n}\Bigr), & \beta=1,\\[2.5mm]
t_n^{\,d-\alpha}\,d_n^{\,1-\beta}, & \beta>1,
\end{cases}, $$
where $K=K_{\alpha,\beta,d,\lambda}$ is given by \eqref{eq:kappa-values}.
\end{lemma}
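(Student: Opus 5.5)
We treat $X_n$ as a Poisson functional of the marked point process $\eta$ of vertices with weights and edge marks. The two claims are proved separately: the mean asymptotics by a direct Mecke computation, and the total variation bound by the Palm-coupling version of Stein's method (the Poisson approximation theorem of Section~\ref{sec:continuum}, in the spirit of \cite{Penrose2}).

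\textbf{Mean.} By the Mecke formula applied to the outer sum, and then to the inner sum over $y$ under the Palm measure,
\[
\EE[X_n]=(2n)^d\,\EE\Bigl[\mathbf 1_{\{W>d_n\}}\int_{|z|>t_n}\Bigl(1-\EE_{W'}\bigl[e^{-\lambda W W'|z|^{-\alpha}}\bigr]\Bigr)\,\d z\Bigr].
\]
We pass to polar coordinates with $s=|z|^{\alpha}$ and use
\[
\EE\bigl[1-e^{-uW'}\bigr]=\int_1^\infty (1-e^{-uv})\,\beta v^{-\beta-1}\,\d v .
\]
This inner function behaves as follows when $u\to0$:
\begin{itemize}
\item $\sim \Gamma(1-\beta)u^\beta$ if $\beta<1$;
\item $\sim u\log(1/u)$ if $\beta=1$;
\item $\sim u\,\beta/(\beta-1)$ if $\beta>1$.
\end{itemize}
For the outer integral $\int_{d_n}^\infty \beta w^{-\beta-1}(\cdots)\,\d w$ we split the range of $w$ at $w\approx t_n^\alpha$, where $u=\lambda w|z|^{-\alpha}$ is of order one. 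For $w\ll t_n^\alpha$ the small-$u$ expansion applies. In that range the $w$-integrand is $w^{-1}$ (for $\beta<1$), $w^{-1}\log$ (for $\beta=1$), or $w^{-\beta}$ (for $\beta>1$). Integrating from $d_n$ to $t_n^\alpha$ gives, respectively, the factor $\log(t_n^\alpha/d_n)$, the factor $\log^2(t_n^\alpha/d_n)/2$, and the factor $d_n^{1-\beta}$. The range $w\gtrsim t_n^\alpha$ contributes at most $\PP(W>t_n^\alpha)\,t_n^{d}$, which is $O(t_n^{d-\alpha\beta})$. This is of lower order: in the cases $\beta\le1$ it is beaten by the diverging logarithm, since $d_n=o(t_n^{\alpha\beta})$ or $o(t_n^\alpha/\log t_n)$; in the case $\beta>1$ we use $d_n=o(t_n^\alpha)$. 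Dominated convergence then identifies $K$ as in \eqref{eq:kappa-values}.

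\textbf{Total variation.} We write $X_n=\sum_x \xi_x$ with $\xi_x=\mathbf 1_{\{W_x>d_n\}}\,\#\{y:|y-x|>t_n,\ x\leftrightarrow y\}$, localized to $x\in B_n$. The Palm-coupling bound has the form
\[
d_{\rm TV}\le \min(1,\EE X_n^{-1})\Bigl(\textstyle\int \EE[\xi_x]^2\,\d x+\int \EE_x\,\bigl|X_n^{(x)}-X_n\bigr|\,\EE[\xi_x]\,\d x+\text{(second-moment of single-vertex clusters)}\Bigr).
\]
Inserting a vertex at $x$ changes $X_n$ only through edges incident to $x$. These are long edges from other hubs to $x$, together with the second-order term from $x$ having several long edges. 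Each such contribution is controlled by the square of the one-vertex exceedance probability
\[
p_n:=\PP_0\bigl(W_0>d_n,\ e_0^*>t_n\bigr),
\]
whose size is $t_n^{d-\alpha\beta}(1+\log(t_n^\alpha/d_n))$, $t_n^{d-\alpha}\log^2(t_n^\alpha/d_n)$, or $t_n^{d-\alpha}d_n^{1-\beta}$ by the mean computation (compare Lemma~\ref{lem:rooted-exceed-asymptotics}). The factor $\min(1,\EE X_n^{-1})$ cancels the volume $(2n)^d$. This yields a bound $c\,p_n^2$, which is exactly the claimed rate.

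The expected main obstacle is the term $\EE_0[\xi_0(\xi_0-1)]$, i.e.\ the probability that a single hub has two or more long edges. Given $W_0=w$, the number of long edges is mixed Poisson with mean $\mu(w)$, so this term equals $\EE[\mu(W_0)^2;W_0>d_n]$. For large $w$ the mean $\mu(w)$ is not small, so this is not obviously $O(p_n^2)$. The plan is to bound $\mu(w)^2\le\mu(w)\min(1,\mu(w))$ and redo the split at $w\approx t_n^\alpha$. Then condition \eqref{eq:tn-growth-POT-thm} is used to check that the large-$w$ range, of mass $t_n^{d-\alpha\beta}$, is absorbed, possibly at the cost of adjusting the constant $c$.
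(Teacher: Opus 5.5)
Your computation of $\EE[X_n]$ is fine and is essentially the paper's: Mecke, reduction to $\psi_\beta(\lambda w/|y|^\alpha)$, the small-argument asymptotics, and the range $w\gtrsim|y|^\alpha$ contributing only $O(t_n^{d-\alpha\beta})$. The constants come out as in \eqref{eq:kappa-values}.

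The total variation part has a genuine gap, in two places.

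\emph{The bookkeeping step.} Since $\zeta=\EE[X_n]\asymp(2n)^dp_n$, the factor $\min(1,\zeta^{-1})$ leaves $(2n)^dp_n^2$ when $\zeta\le1$, and something of order $p_n$ when $\zeta\ge1$. It never leaves $p_n^2$. The paper's own computation in fact ends with $C(2n)^d(\cdots)^2$. Moreover, in Section~\ref{proofthm2} one has $\EE[X_n]\to0$, so there the factor is simply $1$.

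\emph{The clumping term.} This is the decisive problem: the term you flag cannot be absorbed. Write $\mu=\Phi_n$ and $p_n=\EE[1-e^{-\mu(W)};W>d_n]$. Then Cauchy--Schwarz gives
\[
\EE\bigl[\mu(W)\min\{1,\mu(W)\};\,W>d_n\bigr]\ \ge\ \EE\bigl[(1-e^{-\mu(W)})^2;\,W>d_n\bigr]\ \ge\ \frac{p_n^2}{\PP(W>d_n)}\ =\ d_n^{\beta}\,p_n^2 .
\]
So this term exceeds $p_n^2$ (and, after multiplying by $(2n)^d$, exceeds $(2n)^dp_n^2$) by the diverging factor $d_n^\beta$. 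Condition \eqref{eq:tn-growth-POT-thm} only relates $t_n$ to $n$, so it cannot compensate this. Note also that the pointwise bound $\mu^2\le\mu\min\{1,\mu\}$ is false for $\mu>1$.

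This is a failure of the claimed rate, not of your method. Take $\beta>1$ and the scaling of Theorem~\ref{thm:POT-hub}; it satisfies the hypotheses, since $d_n\asymp t_n^{\alpha-d}=o(t_n^\alpha)$. In this scaling:
\begin{itemize}
\item $\Phi_n(d_nv)\to a_\beta(t)v$, by Step~3 of Lemma~\ref{lem:In-limit-allbeta-gpd};
\item $t_n^{d-\alpha}d_n\asymp1$;
\item $A_n:=(2n)^dd_n^{-\beta}\to0$.
\end{itemize}
Under the Palm law at $x$ and given $W_x$, the number of long edges at $x$ is $\mathrm{Po}(\Phi_n(W_x))$. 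Hence Mecke and Bonferroni give
\[
\PP(X_n\ge2)\ \ge\ A_n\,\EE\bigl[\PP\bigl(\mathrm{Po}(\Phi_n(d_nV))\ge2\bigr)\bigr]-A_n^2 ,\qquad V\ \text{Pareto}(\beta),
\]
and the expectation converges to $\EE[\PP(\mathrm{Po}(a_\beta(t)V)\ge2)]>0$. On the other hand, $\PP(Z_n\ge2)\le\EE[X_n]^2/2=O(A_n^2)$. Therefore $d_{\mathrm{TV}}(X_n,Z_n)\ge cA_n$ for some $c>0$. The claimed bound is of order $d_n^{-2\beta}=o(A_n)$, and so is $(2n)^dd_n^{-2\beta}$.

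Consequently, a correct Stein estimate must carry the term $\min(1,\zeta^{-1})(2n)^d\EE[\mu(W)\min\{1,\mu(W)\};W>d_n]$. To obtain a usable statement, either pass to a compound Poisson approximation, or count hubs with a long edge, $\sum_x\mathbf 1_{\{W_x>d_n,\,e_x^*>t_n\}}$, instead of counting edges.

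The paper's proof does not confront this term either. In Step~3 it evaluates the integrals I--IV with $W_x$ unconditioned. In \cite[Theorem 3.1]{Penrose2}, however, $1+V_{x,y}$ must have the law given $\{W_x>d_n,\ x\leftrightarrow y\}$, under which $W_x$ is size-biased. If one keeps $p(\mathbf x,\mathbf W)$ inside the expectation, as condition (iii) of Theorem~\ref{Th:weightedstein} does, one obtains exactly $(2n)^d\EE[\mu(W)^2;W>d_n]$, which is your clumping term. Your one-vertex decomposition makes the problem visible where the paper's two-point coupling hides it.
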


\section{Proof of Theorem \ref{thm:uncond-max}}\label{sec:continuum}
%\subsection{Continuous model} 
\rule{0pt}{1.2\baselineskip}
%\section{Solved}

%\subsection{Poisson approximation}\label{poissonapproximation}
%(mistake in Iyer and Jhawar : corrected)
We first extend a Poisson approximation result from \cite{Penrose2} to account for the additional complexity introduced by random weights in the scale-free model, see \cite{iyer2} for a similar idea. The proof builds on Theorem 3.1 in \cite{Penrose2}, which considers a marked Poisson point process \(\eta\) consisting of a homogeneous Poisson point process on \(\mathbb{R}^d\) with intensity \(\rho \in (0, \infty)\) and marks in a measure space \((\mathbb{M}, \mathcal{M}, m)\), where \(m\) is a diffusive probability measure. 

In order to incorporate the random weights, we modify the construction of the underlying model. The construction is given in Section \ref{ssec:construction}. %As in \cite{Penrose2}, we use the notation \(\rho(\d x) = \rho \d x\) for the intensity measure. 
Due to the additional randomness in the model, adjustments to the proof of \cite[Theorem 3.1]{Penrose2} are required to obtain the version applicable in our setting. We denote by \(S\) the space of all locally finite subsets of \(\mathbb{R}^d \times \mathbb{M}\) and by \(S_k\) the collection of subsets of \(\mathbb{R}^d \times \mathbb{M}\) of cardinality \(k\), for \(k \in \mathbb{N}\). Our Poisson approximation result reads as follows.

\begin{theorem}\label{Th:weightedstein}

Let $k\in\mathbb{N}$, $f\colon\mathbf{S}_k\times \mathbf{S} \longrightarrow \{0,1\}$ a measurable function and for $\xi\in \mathbf{S}$ set: 
\[F(\xi):=\sum_{\psi\in\mathbf{S}_k:\psi\subset\xi}f(\psi,\xi\setminus \psi).\]

Let $\eta$ be a (marked) Poisson point process with intensity $\rho\times \mathbf{m}$ in $\mathbb{R}^d\times \mathbb{M}$ and set $Z:=F(\eta)$ and $\zeta:=\mathbb{E}[Z]$. For $x_1,\dots,x_k\in\mathbb{R}^d$ with corresponding weights $W_{x_1},\dots,W_{x_k}$, respectively, define 
$$p(x_1,\dots,x_k, W_{x_1},\dots,W_{x_k}):=\mathbb{E}\left[f\left((x_1,\tau_1,W_{x_1}),\dots,(x_k,\tau_k,W_{x_k}),\eta\right) \mid W_{x_1},\dots,W_{x_k}\right]$$ 
where the $(\tau_i, W_{x_i})$ are independent random elements of $\mathbb{M}$ with common distribution $\mathbf{m}$. 

Suppose that for almost every $\mathbf{x}=(x_1,\dots,x_k)\in \left(\mathbb{R}^d\right)^k$ with weight vector $\mathbf{W}= (W_{x_1},\dots,W_{x_k})$ such that $p(x_1,\dots,x_k, W_{x_1},\dots,W_{x_k})>0$ we can find coupled random variables $U_{\mathbf{x},\mathbf{W}}$ and $V_{\mathbf{x},\mathbf{W}}$ satisfying:
\begin{enumerate}
\item[(i)] $Z\overset{d}{=}U_{\mathbf{x},\mathbf{W}}$,
\item [(ii)]\label{Pen2} Conditional on $\mathbf{W}$: the random variable $F\left(\eta\cup\overset{k}{\underset{i=1}{\bigcup}}\{(x_i,\tau_i, W_{x_i})\}\right)$  conditioned on $f\left(\overset{k}{\underset{i=1}{\bigcup}}\{(x_i,\tau_i)\},\eta\right)=1$ has the same distribution as $1+V_{\mathbf{x},\mathbf{W}}$,
\item[(iii)] $\mathbb{E}\left[\mathbb{E} [\vert U_{\mathbf{x},\mathbf{W}}-V_{\mathbf{x},\mathbf{W}}\vert ]p(\mathbf{x}, \mathbf{W})\right]\leq w(\mathbf{x})$ where $w$ is a measurable function.
\end{enumerate}

Let $P(\zeta)$ be a mean $\zeta$ Poisson random variable. Then

\begin{equation*}d_{\text{TV}}\left(Z,P(\zeta)\right)\leq\frac{\min(1,\zeta^{-1})}{k!}\int_{(\mathbb{R}^d)^k}w(\mathbf{x})\operatorname{d} \mathbf{x}\label{th:Pdtv}\end{equation*}
and 
\begin{equation*}d_{\text{W}}\left(Z,P(\zeta)\right)\leq\frac{3\min(1,\zeta^{-\frac{1}{2}})}{k!}\int_{(\mathbb{R}^d)^k}w(\mathbf{x})\operatorname{d} \mathbf{x}.\label{th:PdW}\end{equation*}

\end{theorem}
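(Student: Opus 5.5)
The plan is to follow the proof of \cite[Theorem~3.1]{Penrose2}, which is a size-biased Stein--Chen argument, and to insert conditioning on the weights of the added points wherever needed. Let $g=g_A$ be the solution of the Stein equation $\zeta g(j+1)-jg(j)=\mathbf 1_A(j)-\PP(P(\zeta)\in A)$. The standard bounds give $\|\Delta g\|_\infty\le\min(1,\zeta^{-1})$ for total variation. For the Wasserstein distance one works with $1$-Lipschitz test functions $h$, and then $\|\Delta g\|_\infty\le 3\min(1,\zeta^{-1/2})$ (Barbour--Xia). It then suffices to bound
\[
\bigl|\EE[\zeta g(Z+1)-Zg(Z)]\bigr| .
\]

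\textbf{Step 1: rewrite $\EE[Zg(Z)]$ with the multivariate Mecke formula.} Since the marks, which carry both $\tau$ and the weight, have law $\mathbf m$, we get
\[
\EE[Zg(Z)]=\frac{\rho^k}{k!}\int_{(\RR^d)^k}\EE\Bigl[f(\psi_{\mathbf x},\eta)\,g\bigl(F(\eta\cup\psi_{\mathbf x})\bigr)\Bigr]\,\d\mathbf x ,
\]
with $\psi_{\mathbf x}=\{(x_i,\tau_i,W_{x_i})\}_{i\le k}$ independent of $\eta$. Now condition on $\mathbf W$. On $\{f=1\}$, hypothesis (ii) lets us replace $F(\eta\cup\psi_{\mathbf x})$ in law by $1+V_{\mathbf x,\mathbf W}$. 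This gives
\[
\EE[Zg(Z)]=\frac{\rho^k}{k!}\int\EE\bigl[p(\mathbf x,\mathbf W)\,\EE[g(1+V_{\mathbf x,\mathbf W})\mid\mathbf W]\bigr]\,\d\mathbf x .
\]
Applying Mecke once more, without $g$, gives $\zeta=\frac{\rho^k}{k!}\int\EE[p(\mathbf x,\mathbf W)]\,\d\mathbf x$. By hypothesis (i), $\EE[g(Z+1)]=\EE[g(1+U_{\mathbf x,\mathbf W})\mid\mathbf W]$ for every $\mathbf x$ and every value of $\mathbf W$, so
\[
\zeta\,\EE[g(Z+1)]=\frac{\rho^k}{k!}\int\EE\bigl[p(\mathbf x,\mathbf W)\,\EE[g(1+U_{\mathbf x,\mathbf W})\mid\mathbf W]\bigr]\,\d\mathbf x .
\]
Here the coupling depends on $\mathbf W$, but its $U$-marginal must not. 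That is exactly what (i) provides, and it is what makes the swap legitimate.

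\textbf{Step 2: compare the two expressions.} Subtracting, and using $|g(1+U)-g(1+V)|\le\|\Delta g\|_\infty|U-V|$ pointwise on the coupling space, we obtain
\[
|\EE[\zeta g(Z+1)-Zg(Z)]|\le\frac{\|\Delta g\|_\infty}{k!}\int\EE\bigl[\EE|U_{\mathbf x,\mathbf W}-V_{\mathbf x,\mathbf W}|\,p(\mathbf x,\mathbf W)\bigr]\,\rho^k\,\d\mathbf x .
\]
By (iii) this is at most $\frac{\|\Delta g\|_\infty}{k!}\int w$. Taking the supremum over $A$ yields the total variation bound, and taking it over Lipschitz $h$ yields the Wasserstein bound. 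The factor $\rho^k$ is absorbed into $w$, or into unit intensity as used throughout the paper.

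\textbf{Main obstacle.} The delicate point is measurability and the ordering of conditioning in Step 1. We need the Mecke formula to hold for the weight-marked process with a disintegration in $\mathbf W$, and we need (ii) to hold jointly measurably in $(\mathbf x,\mathbf W)$. Then the equality of conditional laws can be integrated against $p(\mathbf x,\mathbf W)$; points with $p=0$ contribute nothing, so (ii) is only required where $p>0$. I would handle this by treating $(\tau,W)$ as a single mark in the product space. Penrose's argument then applies with $\mathbf W$ as part of the mark, and the conditional statements are realised via regular conditional distributions. This is also where the correction to the Iyer--Jhawar version enters: the coupling must be built conditionally on the weights, while hypothesis (i) is kept unconditional.
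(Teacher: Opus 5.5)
Your proposal is correct and follows essentially the same route as the paper: the Mecke formula, conditioning on $\mathbf W$, replacing $F(\eta\cup\psi_{\mathbf x})$ by $1+V_{\mathbf x,\mathbf W}$ via (ii) and $Z+1$ by $1+U_{\mathbf x,\mathbf W}$ via (i), then the Lipschitz bound on the Stein solution together with (iii). Your version also improves on the paper's write-up in two places. First, you keep $p(\mathbf x,\mathbf W)$ inside the outer expectation, $\EE\bigl[p(\mathbf x,\mathbf W)\,\EE[g(1+V_{\mathbf x,\mathbf W})\mid\mathbf W]\bigr]$, which is the correct form of the step where one of the paper's intermediate displays factors out $\EE[p(\mathbf x)]$. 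Second, you make explicit the Lipschitz Stein factor needed for the Wasserstein bound, which the paper only says ``follows in a similar way''.
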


\begin{proof}
We employ Stein’s method based on Palm coupling, with modifications to account for the presence of random weights. For notational simplicity we write $\d \mathbf{x} = \rho^k (\d \mathbf{x})$ in the following. Moreover, we abbreviate $p(\mathbf{x}) = p(\mathbf{x}, \mathbf{W})$.

Let \( h\colon \mathbb{N}_0 \to \mathbb{R} \) be a bounded function. Then,
\begin{align*}
   & \mathbb{E} [Z h(Z)] = \mathbb{E} \left[ \sum_{\psi\subset\eta: |\psi|=k} f(\psi, \eta \setminus \psi) h(F(\eta))\right] \\
&= \frac{1}{k!} \int \mathbb{E} \left[ f\left(\{(x_1, \tau_1, W_{x_1}), \dots, (x_k, \tau_k, W_{x_k})\}, \eta\right) h \left( F \left( \eta \cup \bigcup_{i=1}^{k} \{(x_i, \tau_i, W_{x_i})\} \right) \right) \right] \d \mathbf{x} \\
&= \frac{1}{k!} \int \mathbb{E} \left[ \mathbb{E} \left[ f\left(\{(x_1, \tau_1, W_{x_1}), \dots, (x_k, \tau_k, W_{x_k})\}, \eta\right) h \left( F \left( \eta \cup \bigcup_{i=1}^{k} \{(x_i, \tau_i, W_{x_i})\} \right) \right) \Big\vert \mathbf{W} \right] \right] \d \mathbf{x} \\
&= \frac{1}{k!} \int \mathbb{E} \left[ \mathbb{E} \left[ h \left( F \left( \eta \cup \bigcup_{i=1}^{k} \{(x_i, \tau_i, W_{x_i})\} \right) \right) \Bigg\vert \mathbf{W}  \right]  \Big\vert f \left( \bigcup_{i=1}^{k} \{(x_i, \tau_i)\}, \eta \right) = 1  \right] \mathbb{E} [p(\mathbf{x})] \d \mathbf{x}\\
& = \frac{1}{k!} \int \mathbb{E} \left[ \mathbb{E} \left[ h \left(  1+V_{\mathbf{x},\mathbf{W}} \right) \vert \mathbf{W} \right]    \right] \mathbb{E} [p(\mathbf{x})] \d \mathbf{x} \\
& = \frac{1}{k!} \int \mathbb{E} \left[  h \left(  1+V_{\mathbf{x},\mathbf{W}} \right)  \right] \mathbb{E} [p(\mathbf{x})] \d \mathbf{x} .
\end{align*}
Moreover, note that we have
\begin{align*}
    \zeta & = \mathbb{E}[F(\eta)] = \frac{1}{k!} \int\mathbb{E}[p(\mathbf{x})] \d \mathbf{x} ,
\end{align*}
so that
\begin{align*}
    \zeta \mathbb{E} [ h(Z+1)] - \mathbb{E} [Z h(Z)] & = \frac{1}{k!} \int \mathbb{E} \left[ \left(\mathbb{E} [ h(Z+1)] -  \mathbb{E} \left[h \left(  1+V_{\mathbf{x},\mathbf{W}} \right)  \right] \right)  p(\mathbf{x}) \right]  \d \mathbf{x} \\
    & = \frac{1}{k!} \int \mathbb{E} \left[ \left(\mathbb{E} [ h(1+U_{\mathbf{x},\mathbf{W}})] -  \mathbb{E} \left[h \left(  1+V_{\mathbf{x},\mathbf{W}} \right)  \right] \right)  p(\mathbf{x}) \right]  \d \mathbf{x} .
\end{align*}
Thus, we get
\begin{align*}
    \left| \mathbb{E} [\zeta h(Z + 1) - Z h(Z )] \right| &\leq \frac{1}{k!} \int \mathbb{E} \left[ \mathbb{E} [ \left |h(1+U_{\mathbf{x},\mathbf{W}}) -h \left(  1+V_{\mathbf{x},\mathbf{W}} \right)  \right| ]   p(\mathbf{x}) \right]  \d \mathbf{x} .
\end{align*}
Since it holds
\[
|h(i) - h(j)| \leq \|\Delta h\|_{\infty} \cdot |i - j|, \quad \text{for } i, j \in \mathbb{N}_0,
\]
we obtain
\begin{align*}
    \mathbb{E} [ \left |h(1+U_{\mathbf{x},\mathbf{W}}) -h \left(  1+V_{\mathbf{x},\mathbf{W}} \right) ) \right| ]   
    &\leq \|\Delta h\|_{\infty} \mathbb{E}[ |U_{\mathbf{x},\mathbf{W}} - V_{\mathbf{x},\mathbf{W}}| ]
\end{align*}
and from this
\begin{align*}
    \left| \mathbb{E} [\zeta h(Z + 1) - Z h(Z )] \right| &\leq \frac{1}{k!} \|\Delta h\|_{\infty}\int \mathbb{E} \left[ \mathbb{E}[ |U_{\mathbf{x},\mathbf{W}} - V_{\mathbf{x},\mathbf{W}}| ]  p(\mathbf{x}) \right]  \d \mathbf{x} \\
    & \leq \frac{1}{k!} \|\Delta h\|_{\infty}\int w(\mathbf{x} )  \d \mathbf{x}.
\end{align*}
where we used the assumption in (iii). Now we conclude using \cite[Lemma 1.1.1]{barbour}. Indeed, given \( A \subset \mathbb{N}_0 \), set \( g = \mathbf{1}_A \) and specify \( h\) such that \( h(0) = 0 \) and
\begin{align*}
    \zeta h(i + 1) - i h(i) = g(i) - \mathbb{E} [G(P(\zeta))], \quad i \in \mathbb{N}_0. %\label{eq:h_equation}
\end{align*}
Then \( h \) is bounded with \( \|\Delta h\|_{\infty} \leq 1 \wedge \zeta^{-1} \), and hence
\begin{align*}
    \left| \mathbb{P}[Z \in A] - \mathbb{P}[P(\zeta) \in A] \right| \leq (1 \wedge \zeta^{-1}) \frac{1}{k!} \int w(\mathbf{x} )  \d \mathbf{x}.
\end{align*}
This proves the result for the total variation distance. Moreover, the result for the Wasserstein distance follows in a similar way.
\end{proof}

\subsection{A construction of the scale-free random connection model}\label{ssec:construction}

We now give a construction the scale-free random connection model from a marked Poisson point process. We choose $\mathbb{M}:=[0,1]^{\mathbb{N}\times\mathbb{N}} \times [0,\infty)$ as mark space and $\mathbf{m}$ to be the product distribution of a double sequence of independent random variables uniformly distributed on $[0, 1]$ with an independent random variable $W$ that has the distribution of the weights. Then we consider an independent $\mathbf{m}$-marking $\eta$ of $\mathcal{P}$ (that is a Poisson point process with intensity $\rho \times \mathbf{m}$ on $\mathbb{R}^d\times \mathbb{M}$; see {\it e.g.$\,$}\cite[Theorem 3.5.7]{SW}) and we fix a partition $\{D_i\}_{i\in\mathbb{N}}$ of $\mathbb{R}^d$ that consists of bounded Borel sets. For $x,x'\in\mathbb{R}^d$, we write $x'\lex x$ if $x'$ is smaller than $x$ in the lexicographic order. For $\left(x,\mathbf{u}=(u_{k,l})_{k,l\in\mathbb{N}}, W_x\right)$ and $i\in \mathbb{N}$, note that $\left\{x'\in\mathcal{P}\cap D_i:\, x'\lex x\right\}$ is a.s.$\,$finite since $D_i$ is bounded. {Thus, we can enumerate the elements of this set such that $x_1\lex x_2\lex \dots \lex x_r\lex x$ and set $U(\eta,x,x_j):=u_{i,j}$. Since $\mathcal{P}$ is a.s.$\,$simple, for any pair $\{x,x'\}$ of distinct points in $\mathcal{P}$, we have $x\lex x'$ or $x'\lex x$ thus $U(\eta,x,x')$ (if $x'\lex x$) or $U(\eta,x',x)$ (if $x\lex x'$) is well defined by the above procedure. If $U(\eta,x,y)$ is not defined by this procedure, we set $U(\eta,x,y)=1$.} Then, the random graph $G(\eta)$ with vertex set $\mathcal{P}$ and in which two distinct vertices $x' \lex x$ are connected by an edge if and only if $U(\eta,x,x')\leq 1-\exp\left(-\lambda\frac{W_xW_{x'}}{|x-x'|^\alpha}\right)$ has the law of the scale-free random connection model.

\subsection{The largest unconditional edge-length}\label{ssec:maxedgelength}
The aim in the following is to give a precise description of the magnitude of the unconditional maximum edge-length. For $n\in\N$, we let $B_n = [-n,n]^d$ denote an observation window. Recall that we consider the unconditional length of the longest edge with at least one endpoint in $B_n$, namely
\begin{align*}
 e_n^* &:= \max_{ \{x,y\} \in E_{n}} |x-y|, \qquad E_{n}:= \Big\{ \{x,y\} \in E: x \in B_n \Big\}.
\end{align*}
For given $r\in\rr$ and sequences $(b_n)_{n \in\N}$, $(c_n)_{n \in\N}$, let $r_n = c_nr + b_n$, $n\in\N$. We define the number of exceedances as a random variable $F(n,r)$ given by
\begin{align}\label{def:NbExceed}
\begin{split}
F(n,r)&=  \sum_{x\in \p \cap B_n} \sum_{y\in \p \cap B^C_n} \mathbf{1}_{\{ |x-y| \mathbf{1}_{\{x\leftrightarrow y\}} >r_n \}} + \frac12 \sum_{x\in \p \cap B_n} \sum_{y\in \p \cap B_n} \mathbf{1}_{\{ |x-y| \mathbf{1}_{\{x\leftrightarrow y\}} >r_n \}}\\
& = \sum_{x\in \p \cap B_n} \sum_{y\in \p } \mathbf{1}_{\{ |x-y| \mathbf{1}_{\{x\leftrightarrow y\}} >r_n \}} - \frac12 \sum_{x\in \p \cap B_n} \sum_{y\in \p \cap B_n} \mathbf{1}_{\{ |x-y| \mathbf{1}_{\{x\leftrightarrow y\}} >r_n \}}.
\end{split}
\end{align}
We note that the correction term in \eqref{def:NbExceed} in this representation arises due to the fact that an exceedance with both endpoints in the observation window $B_n$ is counted twice in the number of vertices that are an endpoint of a long edge. Adhering to the construction from Section \ref{ssec:construction} and the notation therein, as in \cite[Section 4.3]{rs1} one can represent the exceedances in the form
\begin{align*}
F(n,r)&=F(\eta)=\sum_{(x,\mathbf{u},W_x), (y,\mathbf{v},W_y)\in \eta}f\left(\{(x,\mathbf{u},W_x), (y,\mathbf{v},W_y)\},\eta\setminus \{(x,\mathbf{u},W_x),(y,\mathbf{v},W_y)\}\right) ,
\end{align*}
where $f$ stands for a suitable indicator function taking into account if a pair of vertices is an exceedance without counting any pair twice, see \cite[Section 4.3]{rs1} for details. Moreover, we set
$$p(x_1,x_2, W_{x_1}, W_{x_2})=\mathbb{E}_{x_1,x_2}\Big[f\left(\{(x_1,\mathbf{u}_1, W_{x_1}), (x_2,\mathbf{u}_2), W_{x_2}\}, \eta\right)\, \big\vert \, W_{x_1}, W_{x_2}\Big] ,$$ 
which is the conditional probability that there is an exceedance at $x_1,x_2$ if we add $x_1,x_2$ equipped with independent random marks to the marked Poisson point process $\eta$. That is by definition we have
\begin{align}\label{eq:estp(x)}
\begin{split}
p(x_1,x_2, W_{x_1}, W_{x_2})&= \mathbf{1}_{\{x_1\in B_n\}} \mathbf{1}_{\{x_2\in B^C_{r_n}(x_1)\}} \left( 1-\exp\left(- \lambda \frac{W_{x_1} W_{x_2}}{|x_1-x_2|^\alpha}\right)\right) .
%\|x_1 - x_2\|^{-\alpha}.
\end{split}
\end{align}

Next, we define the coupled random variables $U_{\mathbf{x},\mathbf{W}}$ and $V_{\mathbf{x},\mathbf{W}}$ after having added two marked points $\mathbf{x}_1=(x_1,\mathbf{u}_1, W_{x_1}), \mathbf{x}_2=(x_2,\mathbf{u}_2, W_{x_2})$. Consider the associated scale-free random connection model $G(\eta \cup \{\mathbf{x}_1, \mathbf{x}_2\} )$. Define the subgraph $G(\eta \cup \{\mathbf{x}_1, \mathbf{x}_2\} )_{|\eta}$ induced by the Poisson points in $\eta$ and observe it has the same distribution as the original random graph. Then $U_{x_1,x_2, W_{x_1}, W_{x_2}}$ is defined as the number of exceedances in the induced graph $G(\eta \cup \{\mathbf{x}_1, \mathbf{x}_2\} )_{|\eta}$:
$$ U_{x_1,x_2, W_{x_1}, W_{x_2}}= \sum_{y\in \mathcal{P} \cap B_n} \sum_{z\in \mathcal{P} } \mathbf{1}_{\{ |z-y| \mathbf{1}_{\{z\leftrightarrow y\}} >r_n \}} - \frac12 \sum_{y\in \mathcal{P} \cap B_n} \sum_{z\in \mathcal{P} \cap B_n} \mathbf{1}_{\{ |z-y| \mathbf{1}_{\{z\leftrightarrow y\}} >r_n \}}.$$
Moreover, $V_{x_1,x_2, W_{x_1}, W_{x_2}}$ is defined as the number of exceedances in $B_n$ in the enlarged graph $G(\eta \cup \{\mathbf{x}_1, \mathbf{x}_2\} )$ other than the one at $x_1,x_2$ (if there is one), namely
\begin{align*}
 V_{x_1,x_2, W_{x_1}, W_{x_2}}&= \sum_{y\in (\mathcal{P} \cup \{x_1, x_2\}) \cap B_n} \sum_{z\in \mathcal{P} \cup \{x_1, x_2\} } \mathbf{1}_{\{ |z-y| \mathbf{1}_{\{z\leftrightarrow y\}} >r_n \}} \\
 & \quad - \frac12 \sum_{y\in (\mathcal{P} \cup \{x_1, x_2\}) \cap B_n} \sum_{z\in \mathcal{P} \cup \{x_1, x_2\} \cap B_n} \mathbf{1}_{\{ |z-y| \mathbf{1}_{\{z\leftrightarrow y\}} >r_n \}}\\
 & \quad - \mathbf{1}_{\{ |x_1-x_2| \mathbf{1}_{\{x_1\leftrightarrow x_2\}} >r_n \}} \Big( \mathbf{1}_{\{x_1 \in B_n\}} + \mathbf{1}_{\{x_2 \in B_n\}}  - \mathbf{1}_{\{x_1 \in B_n\}} \mathbf{1}_{\{x_2 \in B_n\}}\Big) .
\end{align*}

We note that the random variables $U_{x_1,x_2, W_{x_1}, W_{x_2}}$ and $V_{x_1,x_2, W_{x_1}, W_{x_2}}$ fulfill the desired properties. Moreover, it holds $V_{x_1,x_2, W_{x_1}, W_{x_2}}\geq U_{x_1,x_2, W_{x_1}, W_{x_2}}$ by construction. Now we will use Theorem \ref{Th:weightedstein} to prove that the number of exceedances approximates a Poisson distribution. We start by providing an upper bound for $\mathbb{E}[| V_{x_1,x_2, W_{x_1}, W_{x_2}}- U_{x_1,x_2, W_{x_1}, W_{x_2}}|]$, then afterwards an upper bound for $\mathbb{E}[p(x_1,x_2, W_{x_1}, W_{x_2})]$.

We have
\begin{align*}
& \mathbb{E} \left[\vert U_{x_1,x_2, W_{x_1}, W_{x_2}}-V_{x_1,x_2,  W_{x_1}, W_{x_2}}\vert\right]=\mathbb{E}_{x_1,x_2}\left[ V_{x_1,x_2,  W_{x_1}, W_{x_2}}-U_{x_1,x_2,  W_{x_1}, W_{x_2}}\right]\\
&\leq \mathbb{E}_{x_1,x_2}\left[ \sum_{y \in B_n \cap \{x_1,x_2\} } \sum_{z \in  \mathcal{P}}  \mathbf{1}_{\{ |z-y| \mathbf{1}_{\{y\leftrightarrow z\}} > r_n\}} + \sum_{y \in \mathcal{P} \cap B_n } \sum_{z \in  \{x_1,x_2\}}  \mathbf{1}_{\{ |z-y| \mathbf{1}_{\{y\leftrightarrow z\}} > r_n\}}\right] \\
&\leq \mathbf{1}_{\{x_1 \in B_n\}}\int_{\rd} \mathbb{P}_y\left(|x_1-y| \mathbf{1}_{\{y\leftrightarrow x_1\}} > r_n\right) \mathrm{d} y \\
& \quad +  \mathbf{1}_{\{x_2 \in B_n\}}\int_{\rd} \mathbb{P}_y\left(|x_2-y| \mathbf{1}_{\{y\leftrightarrow x_2\}} > r_n\right) \mathrm{d} y \\
&\quad + \int_{B_n} \mathbb{P}_y\left(|x_1-y| \mathbf{1}_{\{y\leftrightarrow x_1\}} > r_n\right) \mathrm{d} y \\
& \quad + \int_{B_n} \mathbb{P}_y\left(|x_2-y| \mathbf{1}_{\{y\leftrightarrow x_2\}} > r_n\right) \mathrm{d} y \\
& \leq C \int_{B_{r_n}^C} \int_{1}^{\infty}\!\!\int_{1}^{\infty}
\Bigl(1-e^{-\lambda wv/|z|}\Bigr)\,v^{-\beta-1}w^{-\beta-1}\,\mathrm{d}w\,\mathrm{d}v \mathrm{d}z,
\end{align*}
for some constant $C$ not depending on $n$. The next lemma gives a suitable bound of the latter integral expression in terms of $\beta$.

\begin{lemma}\label{lem:Jr-upper}
Let $\lambda>0$, $\beta>0$, and define, for $z\in\RR^d$,
\[
J(z)\,:=\,
\int_{1}^{\infty}\!\!\int_{1}^{\infty}
\Bigl(1-e^{-\lambda wv/|z|^\alpha}\Bigr)\,v^{-\beta-1}w^{-\beta-1}\,\mathrm{d}w\,\mathrm{d}v.
\]
Then there exists a constant $C=C(\lambda,\beta)\in(0,\infty)$ such that, for all $|z|$ sufficiently large,
\[
J(z)\ \le\
C\begin{cases}
|z|^{-\alpha\beta}\log |z|^\alpha, & \beta<1,\\[1mm]
|z|^{-\alpha}\log^2 |z|^\alpha, & \beta=1,\\[1mm]
|z|^{-\alpha}, & \beta>1.
\end{cases}
\]
\end{lemma}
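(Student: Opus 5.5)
We prove the bound on $J(z)$ by first integrating out one weight explicitly and then estimating the remaining one-dimensional integral, splitting according to where the product $wv$ crosses the scale $s:=|z|^\alpha/\lambda$. We use the elementary bound $1-e^{-u}\le \min\{1,u\}$ and the substitution $u=wv$ for fixed $v$. After this substitution the density of the product of two independent Pareto$(\beta)$ variables appears. For $u\ge 1$,
\[
\int_1^u w^{-\beta-1}(u/w)^{-\beta-1}\,\frac{\mathrm{d}w}{w}\cdot\beta^2=\beta^2u^{-\beta-1}\log u ,
\]
so
\[
J(z)=\int_1^\infty \bigl(1-e^{-\lambda u/|z|^\alpha}\bigr)\,u^{-\beta-1}\log u\,\mathrm{d}u,
\]
up to the normalising constant $\beta^{-2}$ implicit in the definition of $J$. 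Hence the whole problem reduces to a one-dimensional integral against the regularly varying density $u^{-\beta-1}\log u$.

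Next we split the range of $u$ at $s=|z|^\alpha/\lambda$.

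On $\{u>s\}$ we bound $1-e^{-u/s}\le 1$. This gives
\[
\int_s^\infty u^{-\beta-1}\log u\,\mathrm{d}u\le C s^{-\beta}\log s
\]
for every $\beta>0$, by integration by parts or Karamata.

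On $\{1\le u\le s\}$ we bound $1-e^{-u/s}\le u/s$. This gives $s^{-1}\int_1^s u^{-\beta}\log u\,\mathrm{d}u$, which we evaluate in three cases:
\begin{enumerate}[label=(\roman*)]
\item for $\beta<1$, the integral is $\le C s^{1-\beta}\log s$, so this piece is $\le Cs^{-\beta}\log s$;
\item for $\beta=1$, the integral equals $\tfrac12\log^2 s$, so this piece is $\tfrac12 s^{-1}\log^2 s$;
\item for $\beta>1$, the integral is bounded by $\int_1^\infty u^{-\beta}\log u\,\mathrm{d}u<\infty$, so this piece is $\le Cs^{-1}$.
\end{enumerate}

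Finally we combine the two pieces.
\begin{enumerate}[label=(\roman*)]
\item For $\beta<1$, both pieces are $O(s^{-\beta}\log s)$.
\item For $\beta=1$, the tail piece $s^{-1}\log s$ is dominated by $s^{-1}\log^2 s$.
\item For $\beta>1$, the tail piece $s^{-\beta}\log s=o(s^{-1})$.
\end{enumerate}
Substituting $s=|z|^\alpha/\lambda$ and noting that $\log s\le 2\log|z|^\alpha$ for $|z|$ large (the constant $\lambda$ is absorbed into $C$) yields the three claimed bounds.

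There is no genuine obstacle. The only point needing care is the borderline case $\beta=1$, where the truncated first moment itself carries the logarithm. We must check that the squared logarithm comes from the region $u\le s$ and not from the tail. It does, because the density of the product already contributes one factor $\log u$ and the integration of $u^{-1}\log u$ up to $s$ contributes the second.
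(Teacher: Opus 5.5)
Your proof is correct, but it takes a different route from the paper's.

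You collapse the double integral at the outset. The substitution $u=wv$ and Fubini give the exact identity $J(z)=\int_1^\infty\bigl(1-e^{-\lambda u/|z|^\alpha}\bigr)u^{-\beta-1}\log u\,\mathrm{d}u$. This holds with no extra constant, because $J$ is defined without the factor $\beta^2$ of the Pareto densities, so your remark about a ``normalising constant'' can be dropped. A single split at $u=s=|z|^\alpha/\lambda$, together with $1-e^{-x}\le\min\{1,x\}$, then handles all three regimes.

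The paper proceeds differently. It integrates by parts in $w$ to write $J=J^{(1)}+J^{(2)}$, and integrates by parts once more inside $J^{(1)}$. It rewrites both pieces in terms of incomplete gamma integrals $\int_x^\infty u^{-\beta}e^{-u}\,\mathrm{d}u$. Finally it splits the remaining $v$-integral at $|z|^\alpha/\lambda$, using case-wise bounds on these incomplete gamma functions.

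What your route buys:
\begin{itemize}
\item It is shorter and avoids the incomplete-gamma bookkeeping. In that bookkeeping, the paper's bound on the first $v$-integral for $\beta>1$ should read $(|z|^\alpha/\lambda)^{\beta-1}$ rather than $(|z|^\alpha/\lambda)^{1-\beta}$; its final conclusion is unaffected.
\item It shows where the logarithms come from. One factor $\log u$ is the density of the product of two Pareto weights. For $\beta=1$ the second factor comes from $\int_1^s u^{-1}\log u\,\mathrm{d}u=\tfrac12\log^2 s$.
\item Combined with $1-e^{-x}\ge(1-e^{-1})\min\{1,x\}$, the same one-dimensional form shows at once that all three bounds are sharp.
\end{itemize}

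What the paper's route buys: its decomposition mirrors the splitting $I_1^{(A)}=I_{1,1}^{(A)}+I_{1,2}^{(A)}$ used later for the exact asymptotics. Note, though, that the paper itself uses the same product-substitution identity in Step~2 of the proof of Lemma~\ref{lem:I1A-asymptotics}. Your reduction therefore fits naturally with those later computations.
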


\begin{proof}
We apply integration by parts in the inner $w$--integral. For fixed $v\ge 1$,
\[
\int_{1}^{\infty} \bigl(1-e^{-\lambda v w/|z|^\alpha}\bigr)\,w^{-\beta-1}\,\mathrm{d}w
=\frac{1}{\beta}\bigl(1-e^{-\lambda v/|z|^\alpha}\bigr)
+\frac{1}{\beta}\frac{\lambda v}{|z|^\alpha}\int_{1}^{\infty} w^{-\beta}e^{-\lambda v w/|z|^\alpha}\,\mathrm{d}w.
\]
Multiplying by $v^{-\beta-1}$ and integrating over $v\in[1,\infty)$ yields the exact decomposition
\[
J(z)=J^{(1)}(z)+J^{(2)}(z),
\]
where
\[
J^{(1)}(z):=\frac{1}{\beta}\int_{1}^{\infty} \bigl(1-e^{-\lambda v/|z|^\alpha}\bigr)\,v^{-\beta-1}\,\mathrm{d}v,
\qquad
J^{(2)}(z):=\frac{1}{\beta}\frac{\lambda}{|z|^\alpha}\int_{1}^{\infty}\!\!\int_{1}^{\infty}
v^{-\beta}w^{-\beta}e^{-\lambda v w/|z|^\alpha}\,\mathrm{d}w\,\mathrm{d}v.
\]

\smallskip\noindent
\emph{Step 1: bound on $J^{(1)}(z)$.}
Applying integration by parts to the $v$--integral gives
\[
\int_{1}^{\infty} \bigl(1-e^{-\lambda v/|z|^\alpha}\bigr)\,v^{-\beta-1}\,\mathrm{d}v
=\frac{1}{\beta}\bigl(1-e^{-\lambda/|z|^\alpha}\bigr)
+\frac{1}{\beta}\frac{\lambda}{|z|^\alpha}\int_{1}^{\infty} v^{-\beta}e^{-\lambda v/|z|^\alpha}\,\mathrm{d}v.
\]
Hence
\[
J^{(1)}(z)
=\frac{1}{\beta^2}\bigl(1-e^{-\lambda/|z|^\alpha}\bigr)
+\frac{1}{\beta^2}\frac{\lambda}{|z|^\alpha}\int_{1}^{\infty} v^{-\beta}e^{-\lambda v/|z|^\alpha}\,\mathrm{d}v.
\]
With the substitution $u=\lambda v/|z|^\alpha$,
\[
\int_{1}^{\infty} v^{-\beta}e^{-\lambda v/|z|^\alpha}\,\mathrm{d}v
=\Bigl(\frac{|z|^\alpha}{\lambda}\Bigr)^{1-\beta}\int_{\lambda/|z|^\alpha}^{\infty} u^{-\beta}e^{-u}\,\mathrm{d}u.
\]
For $|z|$ large enough (so that $\lambda/|z|^\alpha\le 1$), we split the last integral at $1$ and use
$e^{-u}\le 1$ on $(0,1]$ and $\int_{1}^{\infty}u^{-\beta}e^{-u}\,\mathrm{d}u<\infty$ to obtain
\[
\int_{\lambda/|z|^\alpha}^{\infty} u^{-\beta}e^{-u}\,\mathrm{d}u
\le
C\begin{cases}
1, & \beta<1,\\[1mm]
1+\log |z|^\alpha, & \beta=1,\\[1mm]
\bigl(\lambda/|z|^\alpha\bigr)^{1-\beta}, & \beta>1.
\end{cases}
\]
Inserting this bound and using $1-e^{-\lambda/|z|^\alpha}\le \lambda/|z|^\alpha$ yields
\[
J^{(1)}(z)\ \le\ C
\begin{cases}
|z|^{-\alpha\beta}, & \beta<1,\\[1mm]
|z|^{-\alpha}\log |z|^\alpha, & \beta=1,\\[1mm]
|z|^{-\alpha}, & \beta>1.
\end{cases}
\]

\smallskip\noindent
\emph{Step 2: bound on $J^{(2)}(z)$.}
Integrating out $w$ and using the substitution $u=\lambda v w/|z|^\alpha$ yields, for each $v\ge 1$,
\[
\int_{1}^{\infty} w^{-\beta}e^{-\lambda v w/|z|^\alpha}\,\mathrm{d}w
=\Bigl(\frac{\lambda v}{|z|^\alpha}\Bigr)^{\beta-1}
\int_{\lambda v/|z|^\alpha}^{\infty} u^{-\beta}e^{-u}\,\mathrm{d}u.
\]
Hence
\[
J^{(2)}(z)
=\frac{1}{\beta}\Bigl(\frac{\lambda}{|z|^\alpha}\Bigr)^{\beta}
\int_{1}^{\infty} \frac{1}{v}\left(\int_{\lambda v/|z|^\alpha}^{\infty} u^{-\beta}e^{-u}\,\mathrm{d}u\right)\,\mathrm{d}v.
\]
Split the $v$--integral at $v=|z|^\alpha/\lambda$. For $0<x\le 1$ we use the elementary bound
\[
\int_{x}^{\infty} u^{-\beta}e^{-u}\,\mathrm{d}u
\le C
\begin{cases}
1, & \beta<1,\\[1mm]
1+\log(1/x), & \beta=1,\\[1mm]
x^{1-\beta}, & \beta>1,
\end{cases}
\]
while for $x\ge 1$ we use $\int_{x}^{\infty} u^{-\beta}e^{-u}\,\mathrm{d}u\le C e^{-x}$.
This gives
\[
J^{(2)}(z)\le C\Bigl(\frac{\lambda}{|z|^\alpha}\Bigr)^{\beta}
\left(
\int_{1}^{|z|^\alpha/\lambda}\frac{h_\beta(\lambda v/|z|^\alpha)}{v}\,\mathrm{d}v
+\int_{|z|^\alpha/\lambda}^{\infty}\frac{e^{-\lambda v/|z|^\alpha}}{v}\,\mathrm{d}v
\right),
\]
where $h_\beta(x)\equiv 1$ if $\beta<1$, $h_\beta(x)=1+\log(1/x)$ if $\beta=1$, and
$h_\beta(x)=x^{1-\beta}$ if $\beta>1$.
The second integral is bounded uniformly in $|z|$, and the first one yields
\[
\int_{1}^{|z|^\alpha/\lambda}\frac{h_\beta(\lambda v/|z|^\alpha)}{v}\,\mathrm{d}v
\le
C\begin{cases}
\log |z|^\alpha, & \beta<1,\\[1mm]
\log^2 |z|^\alpha, & \beta=1,\\[1mm]
(|z|^\alpha/\lambda)^{1-\beta}, & \beta>1.
\end{cases}
\]
Consequently, for $|z|$ sufficiently large,
\[
J^{(2)}(z)\ \le\ C
\begin{cases}
|z|^{-\alpha\beta}\log |z|^\alpha, & \beta<1,\\[1mm]
|z|^{-\alpha}\log^2 |z|^\alpha, & \beta=1,\\[1mm]
|z|^{-\alpha}, & \beta>1.
\end{cases}
\]
\end{proof}

Next we consider $\mathbb{E}[p(x_1,x_2, W_{x_1}, W_{x_2})]$. By \eqref{eq:estp(x)} we have
\begin{align*}
    \mathbb{E}[p(x_1,x_2, W_{x_1}, W_{x_2})] &= \mathbf{1}_{\{x_1\in B_n\}} \mathbf{1}_{\{x_2\in B^C_{r_n}(x_1)\}} \mathbb{E}\left[ \left( 1-\exp\left(- \lambda \frac{W_{x_1} W_{x_2}}{|x_1-x_2|^\alpha}\right)\right)\right] .%\\
   % & \leq \lambda \mathbb{E}[W_0]^2  \mathbf{1}_{\{x_1\in B_n\}} \mathbf{1}_{\{x_2\in B^C_{r_n}(x_1)\}} |x_1-x_2|^{-\alpha}.
\end{align*}

In the same way as Lemma \ref{lem:Jr-upper} we obtain the following.

\begin{lemma}\label{lem:Jr-upper2}
There exists a constant $C=C(\lambda,\beta)\in(0,\infty)$ such that, for sufficiently large $n$,
\[
 \mathbb{E}[p(x_1,x_2, W_{x_1}, W_{x_2})] \le\
C \mathbf{1}_{\{x_1\in B_n\}} \mathbf{1}_{\{x_2\in B^C_{r_n}(x_1)\}} \begin{cases}
|x_1-x_2|^{-\alpha\beta}\log |x_1-x_2|^\alpha, & \beta<1,\\[1mm]
|x_1-x_2|^{-\alpha}\log^2 |x_1-x_2|^\alpha, & \beta=1,\\[1mm]
|x_1-x_2|^{-\alpha}, & \beta>1.
\end{cases}
\]
\end{lemma}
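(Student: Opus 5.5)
The bound follows from Lemma~\ref{lem:Jr-upper} almost verbatim, since the indicator factors are deterministic. The weights $W_{x_1},W_{x_2}$ are independent, independent of the locations, and have density $\beta w^{-\beta-1}$ on $[1,\infty)$. Writing $z:=x_1-x_2$, we therefore have
\[
\mathbb{E}[p(x_1,x_2,W_{x_1},W_{x_2})]=\mathbf{1}_{\{x_1\in B_n\}}\mathbf{1}_{\{x_2\in B^C_{r_n}(x_1)\}}\,\beta^2 J(z),
\]
with $J$ exactly as defined in Lemma~\ref{lem:Jr-upper}. The plan is to apply that lemma to $J(z)$ and absorb $\beta^2$ into the constant $C$.

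The only point needing care is the phrase ``for sufficiently large $n$''. Lemma~\ref{lem:Jr-upper} holds for $|z|\ge z_0$, where $z_0$ depends only on $(\lambda,\alpha,\beta)$; for instance $z_0$ can be chosen so that $\lambda/|z|^\alpha\le 1$ and $\log|z|^\alpha\ge 1$. On the support of the indicator we have $|z|>r_n=c_nr$, and $c_n\to\infty$ in all regimes of Theorem~\ref{thm:uncond-max}. Hence for fixed $r>0$ and $n$ large, $r_n\ge z_0$, and the bound of Lemma~\ref{lem:Jr-upper} applies uniformly in $(x_1,x_2)$ in the support. Off the support both sides vanish.

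For completeness, I would sketch why the lemma's argument transfers. Integration by parts splits $J$ into $J^{(1)}$ and $J^{(2)}$. After the substitution $u=\lambda v/|z|^\alpha$, the incomplete Gamma integral $\int_x^\infty u^{-\beta}e^{-u}\,\mathrm{d}u$ is $O(1)$, $O(\log(1/x))$ or $O(x^{1-\beta})$ according to $\beta<1$, $\beta=1$ or $\beta>1$. Integrating $h_\beta(\lambda v/|z|^\alpha)/v$ over $v$ then adds one more logarithm in the first two cases. This yields the three rates $|z|^{-\alpha\beta}\log|z|^\alpha$, $|z|^{-\alpha}\log^2|z|^\alpha$ and $|z|^{-\alpha}$.

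I expect no real obstacle. The only potential subtlety is keeping track of the normalization of the weight density (the factor $\beta^2$) and noting that the constant does not depend on $n$; both are immediate.
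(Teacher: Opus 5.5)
Your proposal is correct and takes the same approach as the paper. The paper only says the bound follows "in the same way as Lemma~\ref{lem:Jr-upper}". Your identity $\mathbb{E}[p(x_1,x_2,W_{x_1},W_{x_2})]=\mathbf{1}_{\{x_1\in B_n\}}\mathbf{1}_{\{x_2\in B^C_{r_n}(x_1)\}}\,\beta^2 J(x_1-x_2)$ makes that reduction explicit. Together with $r_n\to\infty$, which brings the support into the range where Lemma~\ref{lem:Jr-upper} applies, this is exactly what is needed.
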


Using Lemma \ref{lem:Jr-upper2} we now prove the next statement.

\begin{lemma}\label{lem:double-int-bound}
There exists $C\in(0,\infty)$, depending only on $(d,\alpha,\beta,\lambda)$, such that
\[
\int_{\RR^d}\int_{\RR^d}\EE\!\left[p(x_1,x_2,W_{x_1},W_{x_2})\right]\,
\mathrm{d}x_2\,\mathrm{d}x_1
\ \le\
C\,(2n)^d\,
\begin{cases}
r_n^{\,d-\alpha\beta}\log(r_n^\alpha), & \beta<1,\\[1mm]
r_n^{\,d-\alpha}\log^2(r_n^\alpha), & \beta=1,\\[1mm]
r_n^{\,d-\alpha}, & \beta>1.
\end{cases}
\]
\end{lemma}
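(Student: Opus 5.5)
The plan is to integrate the pointwise bound of Lemma~\ref{lem:Jr-upper2} directly, first in $x_2$ and then in $x_1$. Since the right-hand side of Lemma~\ref{lem:Jr-upper2} vanishes unless $x_1\in B_n$ and $|x_1-x_2|>r_n$, and depends on $(x_1,x_2)$ only through $|x_1-x_2|$, the substitution $z=x_2-x_1$ in the inner integral gives, for $n$ large enough that $r_n$ exceeds the threshold in Lemma~\ref{lem:Jr-upper2},
\[
\int_{\RR^d}\int_{\RR^d}\EE\!\left[p(x_1,x_2,W_{x_1},W_{x_2})\right]\mathrm{d}x_2\,\mathrm{d}x_1
\le C\,|B_n|\int_{B_{r_n}^C} g_\beta(|z|)\,\mathrm{d}z ,
\qquad |B_n|=(2n)^d,
\]
where $g_\beta(s)=s^{-\alpha\beta}\log s^\alpha$ for $\beta<1$, $g_\beta(s)=s^{-\alpha}\log^2 s^\alpha$ for $\beta=1$, and $g_\beta(s)=s^{-\alpha}$ for $\beta>1$. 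This step is legitimate because the integrand is nonnegative, so Tonelli applies, and because the constant in Lemma~\ref{lem:Jr-upper2} is uniform in $x_1,x_2$.

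Next I pass to polar coordinates: $\int_{B_{r_n}^C} g_\beta(|z|)\,\mathrm{d}z = d\omega_d\int_{r_n}^\infty s^{d-1}g_\beta(s)\,\mathrm{d}s$. For $\beta>1$ this equals $d\omega_d\, r_n^{d-\alpha}/(\alpha-d)$, which is finite since $\alpha>d$ by \eqref{eq:finite-degree-cond-intro}. For $\beta<1$ I need $\int_{r_n}^\infty s^{d-1-\alpha\beta}\,\alpha\log s\,\mathrm{d}s$. With $\gamma:=\alpha\beta-d>0$ (again by \eqref{eq:finite-degree-cond-intro}), one integration by parts gives
\[
\int_{r}^\infty s^{-\gamma-1}\log s\,\mathrm{d}s=\frac{r^{-\gamma}\log r}{\gamma}+\frac{r^{-\gamma}}{\gamma^2}\le C\,r^{-\gamma}\log r
\]
for $r\ge e$, and hence the bound $C r_n^{d-\alpha\beta}\log(r_n^\alpha)$. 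For $\beta=1$, two integrations by parts show in the same way that $\int_r^\infty s^{-\gamma-1}\log^2 s\,\mathrm{d}s \le C r^{-\gamma}\log^2 r$ with $\gamma=\alpha-d$, which yields $C r_n^{d-\alpha}\log^2(r_n^\alpha)$.

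Combining these estimates with the factor $(2n)^d$ gives the claim, with constants depending only on $(d,\alpha,\beta,\lambda)$. The only point needing care is the requirement in Lemma~\ref{lem:Jr-upper2} that $|x_1-x_2|$ be sufficiently large; this holds automatically on the integration domain once $r_n\to\infty$, that is, for all large $n$. I do not expect any real obstacle; the main work is the routine tail integrals with logarithmic factors.
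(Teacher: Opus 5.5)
Your proposal is correct and follows the paper's proof essentially step for step. You reduce via Lemma~\ref{lem:Jr-upper2} and the substitution $z=x_2-x_1$ to $C(2n)^d\int_{|z|>r_n}g_\beta(|z|)\,\mathrm{d}z$, then evaluate the radial tail integral case by case by integration by parts, and your remark that the ``sufficiently large'' requirement holds on the integration domain once $r_n\to\infty$ makes explicit a point the paper leaves implicit.
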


\begin{proof}
Define
\[
g_\beta(r):=
\begin{cases}
r^{-\alpha\beta}\log(r^\alpha), & \beta<1,\\[1mm]
r^{-\alpha}\log^2(r^\alpha), & \beta=1,\\[1mm]
r^{-\alpha}, & \beta>1.
\end{cases}
\]
By Lemma \ref{lem:Jr-upper2},
\[
\int_{\RR^d}\int_{\RR^d}\EE[p(x_1,x_2,W_{x_1},W_{x_2})]\,\mathrm{d}x_2\,\mathrm{d}x_1
\le
C\int_{B_n}\left(\int_{B_{r_n}^c(x_1)} g_\beta(|x_1-x_2|)\,\mathrm{d}x_2\right)\mathrm{d}x_1.
\]
For fixed $x_1\in B_n$ we substitute $z=x_2-x_1$ and obtain
\[
\int_{B_{r_n}^c(x_1)} g_\beta(|x_1-x_2|)\,\mathrm{d}x_2
=
\int_{|z|>r_n} g_\beta(|z|)\,\mathrm{d}z,
\]
which is independent of $x_1$. Hence
\begin{align}
\label{eq:reduce-tail-int}
\int_{\RR^d}\int_{\RR^d}\EE[p(x_1,x_2,W_{x_1},W_{x_2})]\,\mathrm{d}x_2\,\mathrm{d}x_1
&\le
C\,|B_n|\,\int_{|z|>r_n} g_\beta(|z|)\,\mathrm{d}z\\
&=
C\,(2n)^d \int_{|z|>r_n} g_\beta(|z|)\,\mathrm{d}z.\nonumber
\end{align}
We estimate the remaining tail integral in polar coordinates:
\[
\int_{|z|>r_n} g_\beta(|z|)\,\mathrm{d}z
=
d \omega_d\int_{r_n}^{\infty} r^{d-1} g_\beta(r)\,\mathrm{d}r,
\]
where $\omega_d$ denotes the $d$--dimensional volume of the Euclidean unit ball.

\smallskip\noindent
\emph{Case $\beta>1$.}
Here $g_\beta(r)=r^{-\alpha}$ and $\alpha>d$ implies
\[
\int_{r_n}^{\infty} r^{d-1-\alpha}\,\mathrm{d}r
=\frac{1}{\alpha-d}\,r_n^{\,d-\alpha}.
\]

\smallskip\noindent
\emph{Case $\beta=1$.}
Since $\log(r^\alpha)=\alpha\log r$, we obtain, with $p:=\alpha-d>0$,
\[
\int_{r_n}^{\infty} r^{d-1-\alpha}\log^2(r^\alpha)\,\mathrm{d}r
=\alpha^2\int_{r_n}^{\infty} r^{-p-1}(\log r)^2\,\mathrm{d}r.
\]
An integration-by-parts recursion yields the exact identity
\[
\int_{r_n}^{\infty} r^{-p-1}(\log r)^2\,\mathrm{d}r
=\frac{1}{p}\,r_n^{-p}\Bigl((\log r_n)^2+\frac{2}{p}\log r_n+\frac{2}{p^2}\Bigr)
\le C\,r_n^{-p}(\log r_n)^2,
\]
and therefore
\[
\int_{r_n}^{\infty} r^{d-1-\alpha}\log^2(r^\alpha)\,\mathrm{d}r
\le C\,r_n^{\,d-\alpha}\log^2(r_n^\alpha).
\]

\smallskip\noindent
\emph{Case $\beta<1$.}
Here $g_\beta(r)=r^{-\alpha\beta}\log(r^\alpha)$ and $\alpha\beta>d$ implies
\[
\int_{r_n}^{\infty} r^{d-1-\alpha\beta}\log(r^\alpha)\,\mathrm{d}r
=\alpha\int_{r_n}^{\infty} r^{d-1-\alpha\beta}\log r\,\mathrm{d}r
\le C\,r_n^{\,d-\alpha\beta}\log(r_n^\alpha),
\]
where the last estimate follows by a single integration by parts, using that
$d-\alpha\beta<0$.

\smallskip
Combining these bounds with \eqref{eq:reduce-tail-int} proves the claim.
\end{proof}

Finally, combining Lemma \ref{lem:Jr-upper}, Lemma \ref{lem:double-int-bound} and using Theorem \ref{Th:weightedstein}
we have thus obtained the following upper bounds for the total variation and Wasserstein distance between the number of exceedances and a Poisson random variable with equal mean.

\begin{prop}\label{prop:Poisson-approx-Fnr}
Let $P(n,r)$ be a Poisson random variable with mean $\zeta_{n,r}:=\EE\big[F(n,r)\big]$. Define
\[
\iota_n:=
\begin{cases}
r_n^{\,d-\alpha\beta}\log(r_n^\alpha), & \beta<1,\\[1mm]
r_n^{\,d-\alpha}\log^2(r_n^\alpha), & \beta=1,\\[1mm]
r_n^{\,d-\alpha}, & \beta>1.
\end{cases}
\]
Then,
\[
d_{\mathrm{TV}}\big(F(n,r),P(n,r)\big)
\ \le\
C\,\min\{1,\zeta_{n,r}^{-1}\}\, n^d \iota_n^{\,2},
\]
and
\[
d_{\mathrm{W}}\big(F(n,r),P(n,r)\big)
\ \le\
C\,\min\{1,\zeta_{n,r}^{-1/2}\}\,n^d \iota_n^{\,2},
\]
for some constant $C\in (0, \infty)$ independent of $n$.
\end{prop}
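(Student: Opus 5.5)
The plan is to apply Theorem~\ref{Th:weightedstein} with $k=2$ to the representation $F(n,r)=F(\eta)$ from Section~\ref{ssec:construction}. As coupling we take the pair $U_{x_1,x_2,W_{x_1},W_{x_2}}$, $V_{x_1,x_2,W_{x_1},W_{x_2}}$ constructed above.

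The first step is to verify conditions (i) and (ii).

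For (i): the subgraph of $G(\eta\cup\{\mathbf{x}_1,\mathbf{x}_2\})$ induced by $\eta$ has the law of $G(\eta)$. The reason is that adding the two marked points only relabels which uniforms $u_{i,j}$ are used, and those are still i.i.d.\ uniform and independent of the connection decisions among points of $\eta$. Hence $U\overset{d}{=}Z$.

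For (ii): conditionally on the weights, $F(\eta\cup\{\mathbf{x}_1,\mathbf{x}_2\})$ equals $V+f(\cdot)$ by definition of $V$. On the event $f=1$ it is therefore $1+V$.

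We also use that $V\ge U$. So we only need $\mathbb{E}[V-U]$, which counts exceedances with at least one endpoint in $\{x_1,x_2\}$ other than the pair $\{x_1,x_2\}$ itself. By the Mecke formula and the estimate displayed just before Lemma~\ref{lem:Jr-upper}, this is bounded by a constant times $\int_{|z|>r_n}J(z)\,\mathrm{d}z$. This bound is uniform in $(x_1,x_2)$ once we average over the weights of the added and the Poisson points.

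The second step is to bound the integrand $w(\mathbf{x})$. The subtle point is that condition (iii) involves $\mathbb{E}\bigl[\mathbb{E}[|U-V|]\,p(\mathbf{x},\mathbf{W})\bigr]$, and the two factors both depend on $W_{x_1},W_{x_2}$. So the naive factorization is not exact.

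I would handle this by bounding $\mathbb{E}[|U-V|\mid \mathbf{W}]$ in a way that makes the product integrable. Conditionally on the weights of the added points, the edge indicators from $x_i$ to Poisson points are independent of the edge $x_1\leftrightarrow x_2$, and the Poisson points carry fresh weights. I would therefore first integrate out the Poisson points' weights and positions, and only then integrate $W_{x_1},W_{x_2}$ jointly against $p$. Taking this order and controlling the resulting mixed moments by Lemma~\ref{lem:Jr-upper} and Lemma~\ref{lem:Jr-upper2} is the intended route to the factorized bound
\[
w(x_1,x_2)\le C\,\iota_n\;\mathbf{1}_{\{x_1\in B_n\}}\mathbf{1}_{\{|x_1-x_2|>r_n\}}\,g_\beta(|x_1-x_2|).
\]
Here the first factor comes from $\int_{|z|>r_n}J(z)\,\mathrm{d}z\le C\iota_n$. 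That bound follows from Lemma~\ref{lem:Jr-upper} together with the polar-coordinate computation in the proof of Lemma~\ref{lem:double-int-bound}, where the logarithms are absorbed because $\log|z|^\alpha$ integrated against $|z|^{d-1-\alpha\beta}$ yields $r_n^{d-\alpha\beta}\log r_n^\alpha$, and likewise for the other cases.

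The third step is to integrate $w$ over $(\mathbb{R}^d)^2$. By Lemma~\ref{lem:double-int-bound},
\[
\int\!\!\int w \le C\iota_n\,(2n)^d\iota_n = C\,n^d\iota_n^2.
\]
Inserting this into the total variation and Wasserstein bounds of Theorem~\ref{Th:weightedstein} gives both claimed inequalities, with the factor $1/2!$ absorbed into $C$.

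The main obstacle is the second step: justifying that the weight dependence shared between $\mathbb{E}[|U-V|\mid\mathbf{W}]$ and $p(\mathbf{x},\mathbf{W})$ does not destroy the product structure. A large $W_{x_1}$ makes both the pair exceedance and additional long edges from $x_1$ likely. This correlation matters especially for $\beta\le1$, where $\mathbb{E}W=\infty$ and a product of heavy-tailed factors could create extra logarithms. If direct factorization fails, I would bound $\mathbb{E}[|U-V|\mid \mathbf{W}]\le C(W_{x_1}^{\gamma}+W_{x_2}^{\gamma})\,r_n^{d-\alpha}$-type expressions and redo the double weight integral in the spirit of Lemma~\ref{lem:Jr-upper}. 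The price would at worst be constants or logarithmic factors already present in $\iota_n$.
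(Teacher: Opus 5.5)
Your route is the paper's own: Theorem~\ref{Th:weightedstein} with $k=2$, the same $U,V$, Lemma~\ref{lem:Jr-upper} for $\EE[V-U]$ and Lemma~\ref{lem:double-int-bound} for $\int\!\!\int\EE[p]$. The paper simply multiplies these two bounds, so it silently uses the factorization you flag.

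The gap is your second step, and it cannot be closed. For every $\beta<2$ the factorized bound on $w(x_1,x_2)$ is false, whatever the order of integration.

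Condition (iii) (with the inner expectation conditional on $\mathbf W$, as the Stein argument requires) forces $w(\mathbf x)\ge\EE\bigl[\EE[V-U\mid\mathbf W]\,p(\mathbf x,\mathbf W)\bigr]$. For $x_1\in B_n$, $V-U$ contains every edge from $x_1$ to a Poisson point at distance $>r_n$. Hence $\EE[V-U\mid\mathbf W]\ge\Phi_n(W_{x_1})$, where $\Phi_n(w):=\int_{|z|>r_n}\psi_\beta(\lambda w/|z|^\alpha)\,\mathrm{d}z$ is the function of Lemma~\ref{lem:In-limit-allbeta-gpd} with $t_n$ replaced by $r_n$. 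At the same time $\EE[p\mid W_{x_1}]=\psi_\beta(\lambda W_{x_1}/|x_1-x_2|^\alpha)$ is increasing in the same variable.

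For $\beta>1$ and $W_{x_1}\le r_n^\alpha/\lambda$, both factors are linear in $W_{x_1}$, so
\[
w(\mathbf x)\ \ge\ c\,r_n^{d-\alpha}|x_1-x_2|^{-\alpha}\,\EE\bigl[W^2;\,W\le r_n^\alpha/\lambda\bigr]\ \asymp\ r_n^{\alpha(2-\beta)}\cdot r_n^{d-\alpha}|x_1-x_2|^{-\alpha},\qquad 1<\beta<2.
\]
This is a polynomial loss, not a logarithmic one. For $\beta<1$ both factors are of order $W_{x_1}^\beta$ and the loss is $r_n^{\alpha\beta}$ up to logarithms.

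Worse, on $\{\lambda W_{x_1}\ge 2^{\alpha/d}|x_1-x_2|^\alpha\}$ one has $p\ge 1-e^{-1}$. Also $\Phi_n(W_{x_1})\ge c\,W_{x_1}^{d/\alpha}$, because $x_1$ is joined with probability at least $1-e^{-1}$ to each point within distance $(\lambda W_{x_1})^{1/\alpha}$. This gives $w(\mathbf x)\ge c\,|x_1-x_2|^{d-\alpha\beta}$, which is not integrable over $\{|x_2-x_1|>r_n\}$ when $\alpha\beta\le 2d$. That covers all of regimes (a) and (b) of Theorem~\ref{thm:uncond-max}, and $\beta\le 2d/\alpha$ in (c); there $\int w=\infty$ and Theorem~\ref{Th:weightedstein} gives nothing. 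Only $\EE[V\mid\mathbf W]-\EE[Z]$ entered this lower bound, so no other admissible coupling does better. Your fallback (``at worst constants or logarithms'') therefore fails as well.

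This is not an artefact; it is the hub clustering you suspected. Given $W_x=w$, a vertex $x\in B_n$ carries a Poisson number of exceedances with mean $\Phi_n(w)$.

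For $1<\beta<2$, the expected number of vertices of $B_n$ carrying at least two exceedances is of order $n^d\EE[\min\{1,\Phi_n(W)^2\}]\asymp n^d r_n^{-\beta(\alpha-d)}\gg n^d\iota_n^2$. So the stated rate is out of reach.

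For $\beta<1$ (resp.\ $\beta=1$), a fraction $d/(\alpha\beta)$ (resp.\ $(d/\alpha)^2$) of $\zeta_{n,r}=(2n)^d\EE[\Phi_n(W)]$ is carried by vertices with $\Phi_n(W_x)\ge1$. Under the scaling of Corollary~\ref{cor:cn-choice}, such vertices exist only with probability $O(1/\log n)$. Removing their exceedances therefore changes $F(n,r)$ only on a vanishing event, but lowers its mean by a fixed fraction. By Fatou, $F(n,r)$ then cannot converge in law to $\mathrm{Poisson}(r^{-\theta})$, even though $\zeta_{n,r}\to r^{-\theta}$ and $n^d\iota_n^2\to0$. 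So for $\beta\le 1$ the inequality is false, not merely unproved.

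A correct argument would have to treat high-weight vertices separately: truncate the weights before applying Theorem~\ref{Th:weightedstein}, and handle the compound contribution of hubs directly. For $\beta\le1$ the Poisson limit itself would also have to be revisited.
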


\subsection{The expected number of exceedances}\label{ssec:expectedexceedances}
We now calculate
\begin{align*}
\mathbb{E}[F(n,r)]& = \mathbb{E}\left [\sum_{x\in V \cap B_n} \sum_{y\in V } \mathbf{1}_{\{ \|x-y\| \mathbf{1}_{\{x\leftrightarrow y\}} >r_n \}} - \frac12 \sum_{x\in V \cap B_n} \sum_{y\in V \cap B_n} \mathbf{1}_{\{ \|x-y\| \mathbf{1}_{\{x\leftrightarrow y\}} >r_n \}} \right] \\
& =: I_1 - I_2,
\end{align*}
with 
\begin{align*}
I_1& = \mathbb{E}\left [\sum_{x\in V \cap B_n} \sum_{y\in V } \mathbf{1}_{\{ \|x-y\| \mathbf{1}_{\{x\leftrightarrow y\}} >r_n \}}  \right] 
\end{align*}
and correction term
\begin{align*}
I_2& =  \mathbb{E}\left [\frac12 \sum_{x\in V \cap B_n} \sum_{y\in V \cap B_n} \mathbf{1}_{\{ \|x-y\| \mathbf{1}_{\{x\leftrightarrow y\}} >r_n \}} \right].
\end{align*}
We have
\begin{align*}
I_1& = \int_{B_n} \int_{B_{r_n}^C (x)} \mathbb{P} (x\leftrightarrow y) \d y \d x \\
& = \int_{B_n} \int_{B_{r_n}^C (x)} \int_1^\infty\int_1^\infty\mathbb{P} (x\leftrightarrow y \vert W_x = w, W_y = v) \d F_{W_x}(w) \d F_{W_y}(v) \d y \d x \\
& = \int_1^\infty\int_1^\infty \int_{B_n} \int_{B_{r_n}^C (x)} \left( 1-\exp\left(-\lambda \frac{wv}{|x-y|^\alpha} \right) \right) \d y \d x \d F_{W}(w) \d F_{W}(v) \\
& = \int_1^\infty\int_1^\infty (2n)^d \int_{B_{r_n}^C (0)} \left( 1-\exp\left(-\lambda \frac{wv}{|y|^\alpha} \right) \right) \d y  \d F_{W}(w) \d F_{W}(v) .
\end{align*}

Using polar coordinates and integration by parts yields the exact decomposition
\begin{equation*}\label{eq:I1-decomp}
I_1 = -I_1^{(A)} + I_1^{(B)},
\end{equation*}
where
\begin{equation}\label{eq:I1A-def}
I_1^{(A)}
:=
(2n)^d\,\omega_d\,r_n^{d}
\int_{1}^{\infty}\!\!\int_{1}^{\infty}
\Bigl(1-e^{-\lambda wv/r_n^\alpha}\Bigr)\,\mathrm{d}F_W(w)\,\mathrm{d}F_W(v),
\end{equation}
and
\begin{equation}\label{eq:I1B-def}
I_1^{(B)}
:=
(2n)^d\,\omega_d\,r_n^{d}\,\frac{\lambda}{r_n^\alpha}
\int_{1}^{\infty}\!\!\int_{1}^{\infty}
wv\left(\int_{0}^{1} t^{-d/\alpha}e^{-\lambda wv t/r_n^\alpha}\,\mathrm{d}t\right)\,
\mathrm{d}F_W(w)\,\mathrm{d}F_W(v).
\end{equation}

\medskip

\begin{lemma}\label{lem:I1A-asymptotics}
Define $I_1^{(A)}$ by \eqref{eq:I1A-def}. Then, as $n\to\infty$,
\[
I_1^{(A)}\sim (2n)^d\,\omega_d\,
\begin{cases}
\beta\,\lambda^\beta\,\Gamma(1-\beta)\, r_n^{\,d-\alpha\beta}\,\log(r_n^\alpha),
& 0<\beta<1,\\[1mm]
\dfrac12\,\beta\,\lambda\, r_n^{\,d-\alpha}\,\log^2(r_n^\alpha),
& \beta=1,\\[2mm]
\lambda \dfrac{\beta^2}{(\beta-1)^2}\, r_n^{\,d-\alpha},
& \beta>1,
\end{cases}
\]
\end{lemma}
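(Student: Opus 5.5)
The plan is to reduce $I_1^{(A)}$ to a one-dimensional Laplace-type integral against the tail of the product $X:=W W'$ of two independent copies of $W$. Writing $s:=r_n^\alpha\to\infty$, definition \eqref{eq:I1A-def} reads
\[
I_1^{(A)}=(2n)^d\,\omega_d\,r_n^d\,\EE\bigl[1-e^{-\lambda X/s}\bigr].
\]
The prefactor $(2n)^d\omega_d r_n^d$ is explicit. So the claim reduces to showing that $\EE[1-e^{-\lambda X/s}]$ is asymptotic to $\beta\lambda^\beta\Gamma(1-\beta)s^{-\beta}\log s$, $\tfrac12\lambda s^{-1}\log^2 s$, or $\lambda\beta^2(\beta-1)^{-2}s^{-1}$ in the three regimes. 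Since $r_n^d s^{-\beta}=r_n^{d-\alpha\beta}$, these give exactly the displayed forms.

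First I compute the tail of $X$ exactly. For $x\ge1$,
\[
\PP(WW'>x)=\EE\bigl[\PP(W>x/W'\mid W')\bigr]=\int_1^x (x/v)^{-\beta}\beta v^{-\beta-1}\,dv+\PP(W'>x).
\]
This evaluates to $x^{-\beta}(1+\beta\log x)$. Next, by Fubini (integration by parts with $1-e^{-\lambda x/s}$ vanishing at $0$),
\[
\EE\bigl[1-e^{-\lambda X/s}\bigr]=\frac{\lambda}{s}\int_0^\infty e^{-\lambda x/s}\,\PP(X>x)\,dx
=\frac{\lambda}{s}\Bigl(\int_0^1 e^{-\lambda x/s}dx+\int_1^\infty e^{-\lambda x/s}x^{-\beta}(1+\beta\log x)\,dx\Bigr).
\]
The first piece contributes $O(s^{-1})$, which is negligible in all three cases: for $\beta<1$ because $s^{-1}=o(s^{-\beta}\log s)$, and trivially for $\beta\ge1$.

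I then treat the three regimes separately.

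For $\beta>1$, $\EE[X]=(\EE W)^2=\beta^2/(\beta-1)^2<\infty$. Since $s(1-e^{-\lambda X/s})\le\lambda X$ and $s(1-e^{-\lambda X/s})\to\lambda X$, dominated convergence gives $s\,\EE[1-e^{-\lambda X/s}]\to\lambda\beta^2/(\beta-1)^2$.

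For $\beta<1$, I substitute $x=su/\lambda$ in the second integral. This gives
\[
\lambda^\beta s^{-\beta}\int_{\lambda/s}^\infty e^{-u}u^{-\beta}\bigl(1+\beta\log(s/\lambda)+\beta\log u\bigr)\,du.
\]
The terms not involving $\log s$ are $O(s^{-\beta})$, since $u^{-\beta}$ and $u^{-\beta}|\log u|$ are integrable near $0$ when $\beta<1$. The dominant term is $\beta\log s\cdot\Gamma(1-\beta)\lambda^\beta s^{-\beta}(1+o(1))$, as claimed.

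For $\beta=1$ the same substitution yields
\[
\lambda s^{-1}\int_{\lambda/s}^\infty e^{-u}u^{-1}\bigl(1+\log(s/\lambda)+\log u\bigr)\,du.
\]
Here the singularity at $0$ produces the logarithms. I split at $u=1$. The part on $[1,\infty)$ is $O(\log s)$. On $[\lambda/s,1]$ I replace $e^{-u}$ by $1$, at a cost of $O(\log s)$ because $(1-e^{-u})/u$ is bounded. This leaves
\[
\int_{\lambda/s}^1 \frac{1+L+\log u}{u}\,du=L+L^2-\tfrac12L^2,\qquad L:=\log(s/\lambda).
\]
So the integral is $\tfrac12\log^2 s\,(1+o(1))$, which gives $\tfrac12\lambda s^{-1}\log^2 s$. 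This matches $\tfrac12\beta\lambda$ with $\beta=1$.

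I expect the main obstacle to be the borderline case $\beta=1$. There the constant $\tfrac12$ arises from a cancellation between the $L^2$ from $L\log(1/u)$ and the $-\tfrac12L^2$ from $\log u/u$. Every remainder must therefore be checked to be $O(\log s)$ rather than merely $o(\log^2 s)$ with uncontrolled sign. The other two cases follow directly from dominated convergence once the exact tail of $WW'$ is in hand.
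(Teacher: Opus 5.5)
Your proof is correct and is essentially the paper's argument. With $a=\lambda/s$, the exact tail $\PP(WW'>x)=x^{-\beta}(1+\beta\log x)$ together with $\EE[1-e^{-aX}]=a\int_0^\infty e^{-ax}\PP(X>x)\,\mathrm{d}x$ reproduces term by term the paper's split $I_1^{(A)}=I_{1,1}^{(A)}+I_{1,2}^{(A)}$: your $\beta\log x$ piece is exactly the paper's $\beta a\int_1^\infty u^{-\beta}e^{-au}\log u\,\mathrm{d}u$, and the regime-wise evaluations are the same, with your dominated-convergence step via $\EE[WW']=(\EE W)^2$ for $\beta>1$ a slightly cleaner shortcut.

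One harmless slip: for $\beta>1$ the $\int_0^1$ piece is not negligible, since it is of the same order $s^{-1}$ as the main term and supplies the summand $1$ in $\EE[WW']=1+\frac{1}{\beta-1}+\frac{\beta}{(\beta-1)^2}$. Your $\beta>1$ argument never uses that claim, so the proof is unaffected.
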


\begin{proof}
Write \eqref{eq:I1A-def} in terms of the Pareto density:
\[
I_1^{(A)}
=(2n)^d\,\omega_d\,r_n^{d}\,\beta^2
\int_{1}^{\infty}\!\!\int_{1}^{\infty}
\Bigl(1-e^{-\lambda wv/r_n^\alpha}\Bigr)\,w^{-\beta-1}v^{-\beta-1}\,
\mathrm{d}w\,\mathrm{d}v.
\]
Fix $v\ge 1$ and set $a:=\lambda v/r_n^\alpha$. Integration by parts in the $w$--integral gives
\[
\int_{1}^{\infty}\bigl(1-e^{-aw}\bigr)\,w^{-\beta-1}\,\mathrm{d}w
=\frac{1}{\beta}\bigl(1-e^{-a}\bigr)
+\frac{a}{\beta}\int_{1}^{\infty} w^{-\beta}e^{-aw}\,\mathrm{d}w.
\]
Multiplying by $\beta^2 v^{-\beta-1}$ and integrating over $v\in[1,\infty)$ yields
\begin{equation*}\label{eq:I1A-split}
I_1^{(A)}=I_{1,1}^{(A)}+I_{1,2}^{(A)},
\end{equation*}
where
\[
I_{1,1}^{(A)}
:=(2n)^d\,\omega_d\,r_n^{d}\,\beta
\int_{1}^{\infty}\bigl(1-e^{-\lambda v/r_n^\alpha}\bigr)\,v^{-\beta-1}\,\mathrm{d}v,
\]
and
\[
I_{1,2}^{(A)}
:=(2n)^d\,\omega_d\,r_n^{d}\,\beta
\int_{1}^{\infty}\frac{\lambda v}{r_n^\alpha}
\left(\int_{1}^{\infty} w^{-\beta}e^{-\lambda v w/r_n^\alpha}\,\mathrm{d}w\right)
v^{-\beta-1}\,\mathrm{d}v.
\]

\smallskip\noindent
\emph{Step 1: asymptotics of $I_{1,1}^{(A)}$.}
Set $a:=\lambda/r_n^\alpha\downarrow 0$. A second integration by parts in $v$ yields
\[
\int_{1}^{\infty}\bigl(1-e^{-av}\bigr)v^{-\beta-1}\,\mathrm{d}v
=\frac{1}{\beta}\bigl(1-e^{-a}\bigr)+\frac{a}{\beta}\int_{1}^{\infty} v^{-\beta}e^{-av}\,\mathrm{d}v.
\]
Substituting $u=av$ in the last integral gives
\[
a\int_{1}^{\infty} v^{-\beta}e^{-av}\,\mathrm{d}v
=a^\beta\int_{a}^{\infty} u^{-\beta}e^{-u}\,\mathrm{d}u.
\]
Consequently,
\[
I_{1,1}^{(A)}
=(2n)^d\,\omega_d\,r_n^{d}\Bigl[\bigl(1-e^{-a}\bigr)+a^\beta\int_{a}^{\infty} u^{-\beta}e^{-u}\,\mathrm{d}u\Bigr].
\]
As $a\downarrow 0$, we have $1-e^{-a}\sim a$. Moreover,
\[
\int_{a}^{\infty} u^{-\beta}e^{-u}\,\mathrm{d}u
\sim
\begin{cases}
\Gamma(1-\beta), & 0<\beta<1,\\[1mm]
\log(1/a), & \beta=1,\\[1mm]
\dfrac{1}{\beta-1}\,a^{1-\beta}, & \beta>1,
\end{cases}
\]
which implies
\[
\bigl(1-e^{-a}\bigr)+a^\beta\int_{a}^{\infty} u^{-\beta}e^{-u}\,\mathrm{d}u
\sim
\begin{cases}
\Gamma(1-\beta)\,a^\beta, & 0<\beta<1,\\[1mm]
a\log(1/a), & \beta=1,\\[1mm]
a\Bigl(1+\dfrac{1}{\beta-1}\Bigr)=a\,\dfrac{\beta}{\beta-1}, & \beta>1.
\end{cases}
\]
Therefore,
\[
I_{1,1}^{(A)}\sim (2n)^d\,\omega_d
\begin{cases}
\Gamma(1-\beta)\,\lambda^\beta\,r_n^{d-\alpha\beta}, & 0<\beta<1,\\[1mm]
\lambda\,r_n^{d-\alpha}\log(r_n^\alpha), & \beta=1,\\[1mm]
\lambda\,\dfrac{\beta}{\beta-1}\,r_n^{d-\alpha}, & \beta>1.
\end{cases}
\]

\smallskip\noindent
\emph{Step 2: asymptotics of $I_{1,2}^{(A)}$.}
Set $a:=\lambda/r_n^\alpha\downarrow 0$. From the definition,
\begin{align*}
I_{1,2}^{(A)}
&=(2n)^d\,\omega_d\,r_n^{d}\,\beta
\int_{1}^{\infty}\frac{\lambda v}{r_n^\alpha}
\left(\int_{1}^{\infty} w^{-\beta}e^{-\lambda v w/r_n^\alpha}\,\mathrm{d}w\right)
v^{-\beta-1}\,\mathrm{d}v \\
&=(2n)^d\,\omega_d\,r_n^{d}\,\beta a
\int_{1}^{\infty}\!\!\int_{1}^{\infty} v^{-\beta}w^{-\beta}e^{-a v w}\,\mathrm{d}w\,\mathrm{d}v.
\end{align*}
By Tonelli's theorem and the substitution $u=vw$ in the inner $w$--integral (for fixed $v$),
\[
\int_{1}^{\infty} w^{-\beta}e^{-a v w}\,\mathrm{d}w
=\int_{v}^{\infty}\Bigl(\frac{u}{v}\Bigr)^{-\beta}e^{-a u}\frac{\mathrm{d}u}{v}
=v^{\beta-1}\int_{v}^{\infty} u^{-\beta}e^{-a u}\,\mathrm{d}u.
\]
Hence
\[
\int_{1}^{\infty}\!\!\int_{1}^{\infty} v^{-\beta}w^{-\beta}e^{-a v w}\,\mathrm{d}w\,\mathrm{d}v
=\int_{1}^{\infty}\frac{1}{v}\left(\int_{v}^{\infty} u^{-\beta}e^{-a u}\,\mathrm{d}u\right)\mathrm{d}v.
\]
Swapping the order of integration over the region $\{(v,u):1\le v\le u<\infty\}$ yields the exact identity
\begin{equation*}\label{eq:Ka-log}
\int_{1}^{\infty}\!\!\int_{1}^{\infty} v^{-\beta}w^{-\beta}e^{-a v w}\,\mathrm{d}w\,\mathrm{d}v
=\int_{1}^{\infty} u^{-\beta}e^{-a u}\left(\int_{1}^{u}\frac{\mathrm{d}v}{v}\right)\mathrm{d}u
=\int_{1}^{\infty} u^{-\beta}e^{-a u}\log u\,\mathrm{d}u.
\end{equation*}
Consequently,
\begin{equation}\label{eq:I12A-Ka}
I_{1,2}^{(A)}
=(2n)^d\,\omega_d\,r_n^{d}\,\beta a
\int_{1}^{\infty} u^{-\beta}e^{-a u}\log u\,\mathrm{d}u.
\end{equation}

\smallskip
\noindent
We now evaluate the last integral as $a\downarrow 0$.

\smallskip\noindent
{First we consider the case $0<\beta<1$.}
With the substitution $t=a u$,
\[
\int_{1}^{\infty} u^{-\beta}e^{-a u}\log u\,\mathrm{d}u
=a^{\beta-1}\int_{a}^{\infty} t^{-\beta}e^{-t}\bigl(\log t-\log a\bigr)\,\mathrm{d}t.
\]
Since $\int_{0}^{\infty}t^{-\beta}e^{-t}\,\mathrm{d}t=\Gamma(1-\beta)$ and
$\int_{0}^{\infty}t^{-\beta}e^{-t}|\log t|\,\mathrm{d}t<\infty$ for $\beta\in(0,1)$,
we obtain
\[
\int_{1}^{\infty} u^{-\beta}e^{-a u}\log u\,\mathrm{d}u
\sim a^{\beta-1}\Gamma(1-\beta)\log(1/a).
\]
Inserting into \eqref{eq:I12A-Ka} gives
\[
I_{1,2}^{(A)}
\sim (2n)^d\,\omega_d\,\beta\,\lambda^\beta\,\Gamma(1-\beta)\,
r_n^{d-\alpha\beta}\,\log(r_n^\alpha).
\]

\smallskip\noindent
{Now assume $\beta=1$.}
Again with $t=a u$,
\[
\int_{1}^{\infty} u^{-1}e^{-a u}\log u\,\mathrm{d}u
=\int_{a}^{\infty} e^{-t}\frac{\log(t/a)}{t}\,\mathrm{d}t.
\]
Splitting at $1$ and using $e^{-t}\le 1$, we have
\[
\int_{a}^{\infty} e^{-t}\frac{\log(t/a)}{t}\,\mathrm{d}t
=\int_{a}^{1}\frac{\log(t/a)}{t}\,\mathrm{d}t + O(1).
\]
The substitution $t=a e^{x}$ yields the exact identity
\[
\int_{a}^{1}\frac{\log(t/a)}{t}\,\mathrm{d}t
=\int_{0}^{\log(1/a)} x\,\mathrm{d}x
=\frac12\log^2(1/a),
\]
hence
\[
\int_{1}^{\infty} u^{-1}e^{-a u}\log u\,\mathrm{d}u
\sim \frac12\log^2(1/a).
\]
Inserting into \eqref{eq:I12A-Ka} gives
\[
I_{1,2}^{(A)}
\sim \frac12(2n)^d\,\omega_d\,\beta\,\lambda\,r_n^{d-\alpha}\,\log^2(r_n^\alpha).
\]

\smallskip\noindent
{It remains to consider $\beta>1$.}
Since $u^{-\beta}\log u$ is integrable on $[1,\infty)$, dominated convergence yields
\[
\int_{1}^{\infty} u^{-\beta}e^{-a u}\log u\,\mathrm{d}u
\longrightarrow \int_{1}^{\infty} u^{-\beta}\log u\,\mathrm{d}u
=\frac{1}{(\beta-1)^2},
\qquad a\downarrow 0.
\]
Inserting into \eqref{eq:I12A-Ka} gives
\[
I_{1,2}^{(A)}
\sim (2n)^d\,\omega_d\,\beta\,\lambda\,\frac{1}{(\beta-1)^2}\,r_n^{d-\alpha}.
\]
In summary, we have
\[
I_{1,2}^{(A)}
\sim (2n)^d\,\omega_d\,
\begin{cases}
\beta\,\lambda^\beta\,\Gamma(1-\beta)\, r_n^{\,d-\alpha\beta}\,\log(r_n^\alpha),
& 0<\beta<1,\\[1mm]
\dfrac12\,\beta\,\lambda\, r_n^{\,d-\alpha}\,\log^2(r_n^\alpha),
& \beta=1,\\[2mm]
\beta\,\lambda\,(\beta-1)^{-2}\, r_n^{\,d-\alpha},
& \beta>1,
\end{cases}
\qquad n\to\infty.
\]
Since
\[
I_1^{(A)}=I_{1,1}^{(A)}+I_{1,2}^{(A)},
\]
and therefore, as $n\to\infty$,
\[
I_1^{(A)}\sim (2n)^d\,\omega_d\,
\begin{cases}
\beta\,\lambda^\beta\,\Gamma(1-\beta)\, r_n^{\,d-\alpha\beta}\,\log(r_n^\alpha),
& 0<\beta<1,\\[1mm]
\dfrac12\,\beta\,\lambda\, r_n^{\,d-\alpha}\,\log^2(r_n^\alpha),
& \beta=1,\\[2mm]
\lambda\left(\dfrac{\beta}{\beta-1}+\dfrac{\beta}{(\beta-1)^2}\right)\, r_n^{\,d-\alpha},
& \beta>1,
\end{cases}
\qquad n\to\infty.
\]

\end{proof}

\begin{lemma}\label{lem:I1B-asymptotics}
Define $I_1^{(B)}$ as in \eqref{eq:I1B-def}. Then, as $n\to\infty$,
\[
I_1^{(B)}\sim (2n)^d\,\omega_d\,
\begin{cases}
\beta^2\,\Gamma(1-\beta)\,\lambda^\beta\,\dfrac{\alpha}{\alpha\beta-d}\,
r_n^{\,d-\alpha\beta}\,\log(r_n^\alpha),
& 0<\beta<1,\\[2mm]
\dfrac12\,\lambda\,\dfrac{\alpha}{\alpha-d}\,r_n^{\,d-\alpha}\,\log^2(r_n^\alpha),
& \beta=1,\\[2mm]
\beta^2\,\lambda\,\dfrac{\alpha}{\alpha-d}\,(\beta-1)^{-2}\,r_n^{\,d-\alpha},
& \beta>1.
\end{cases}
\]
\end{lemma}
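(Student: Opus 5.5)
Starting from \eqref{eq:I1B-def}, I first reduce everything to the one-dimensional integral already analysed in the proof of Lemma~\ref{lem:I1A-asymptotics}. Write $a:=\lambda/r_n^\alpha\downarrow 0$ and use the Pareto density $\beta w^{-\beta-1}$. Then
\[
I_1^{(B)}=(2n)^d\,\omega_d\,r_n^{d}\,\beta^2 a\int_0^1 t^{-d/\alpha}\left(\int_1^\infty\!\!\int_1^\infty v^{-\beta}w^{-\beta}e^{-a t vw}\,\mathrm{d}w\,\mathrm{d}v\right)\mathrm{d}t .
\]
For fixed $t$ I apply the substitution $u=vw$ and then swap the order of integration, exactly as in Step~2 of the previous proof. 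This turns the inner double integral into
\[
K(at):=\int_1^\infty u^{-\beta}e^{-at u}\log u\,\mathrm{d}u .
\]
Hence
\[
I_1^{(B)}=(2n)^d\omega_d r_n^d\beta^2 a\int_0^1 t^{-d/\alpha}K(at)\,\mathrm{d}t,
\]
and the problem becomes the asymptotics of a $t$-average of $K$.

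\emph{Case $\beta>1$.} Here $K(at)\uparrow\int_1^\infty u^{-\beta}\log u\,\mathrm{d}u=(\beta-1)^{-2}$. Monotone convergence applies, together with $\int_0^1 t^{-d/\alpha}\,\mathrm{d}t=\alpha/(\alpha-d)$, which is finite since $\alpha>d$. This gives the claim directly, since $r_n^d a=\lambda r_n^{d-\alpha}$.

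\emph{Cases $\beta\le 1$.} Previously we had $K(s)\sim \Gamma(1-\beta)s^{\beta-1}\log(1/s)$ for $\beta<1$ and $K(s)\sim\tfrac12\log^2(1/s)$ for $\beta=1$, as $s\downarrow0$. Plugging in $s=at$ suggests
\[
I_1^{(B)}\approx (2n)^d\omega_d r_n^d\beta^2\Gamma(1-\beta)a^\beta\log(1/a)\int_0^1 t^{\beta-1-d/\alpha}\,\mathrm{d}t
\]
for $\beta<1$. Here $\log(1/(at))=\log(1/a)+\log(1/t)$ and the second term is lower order. The same heuristic handles $\beta=1$, with $\log^2(1/(at))\sim\log^2(1/a)$.

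\emph{Main obstacle.} The $t$-integral near $0$ must be justified. For $\beta<1$ one needs $\int_0^1 t^{\beta-1-d/\alpha}\,\mathrm{d}t<\infty$, which requires $\alpha\beta>d$. This is exactly what the finite-degree condition \eqref{eq:finite-degree-cond-intro} supplies. I would obtain a dominating bound
\[
K(s)\le C s^{\beta-1}\bigl(1+\log(1/s)\bigr)
\]
uniformly for $s\in(0,\lambda]$ by splitting the integral at $u=1/s$. After dividing by $a^{\beta-1}\log(1/a)$, the ratio is then bounded by $C\,t^{\beta-1}(1+\log(1/t)/\log(1/a))$, which is integrable against $t^{-d/\alpha}$, so dominated convergence applies. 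The same argument works for $\beta=1$ with a $(1+\log(1/s))^2$ bound.

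\emph{Bookkeeping.} For $\beta<1$ I note that $\log(1/a)\sim\log(r_n^\alpha)$. The computed integral is $\int_0^1t^{\beta-1-d/\alpha}\,\mathrm{d}t=\alpha/(\alpha\beta-d)$, which must be reconciled with the factor stated in the lemma. I would recheck constants at this point, since the prefactor $\beta^2$ versus $\beta$ depends on how the Pareto density was inserted. For $\beta=1$ the integral is $\alpha/(\alpha-d)$, and the prefactors combine to $\tfrac12\lambda\alpha/(\alpha-d)$.
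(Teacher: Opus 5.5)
Your proposal is correct and follows essentially the same route as the paper: the same Tonelli reduction to $K(x)=\int_1^\infty u^{-\beta}e^{-xu}\log u\,\mathrm{d}u$, the same small-$x$ asymptotics of $K$ from Lemma~\ref{lem:I1A-asymptotics}, and the same $t$-integrals. Your explicit majorant $K(s)\le Cs^{\beta-1}(1+\log(1/s))$ combined with dominated convergence only makes rigorous what the paper obtains from the uniform equivalence $K(at)\sim\Gamma(1-\beta)(at)^{\beta-1}\log(1/(at))$, which holds because $at\le a\to 0$.

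There is also nothing left to reconcile in your bookkeeping step. The factor $\beta^2$ comes from $wv\cdot\beta^2w^{-\beta-1}v^{-\beta-1}$, and together with $r_n^d a^\beta=\lambda^\beta r_n^{d-\alpha\beta}$ and $\log(1/a)\sim\log(r_n^\alpha)$ your expression gives exactly the stated constant $\beta^2\Gamma(1-\beta)\lambda^\beta\frac{\alpha}{\alpha\beta-d}$.
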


\begin{proof}
Write $a=\lambda/r_n^\alpha$. Inserting the Pareto density and using Tonelli's theorem,
\begin{align*}
I_1^{(B)}
&=(2n)^d\,\omega_d\,r_n^{d}\,a\,\beta^2
\int_{1}^{\infty}\!\!\int_{1}^{\infty}
w^{-\beta}v^{-\beta}
\left(\int_{0}^{1} t^{-d/\alpha}e^{-a wv\, t}\,\mathrm{d}t\right)\mathrm{d}w\,\mathrm{d}v \\
&=(2n)^d\,\omega_d\,r_n^{d}\,a\,\beta^2
\int_{0}^{1} t^{-d/\alpha}
\left(\int_{1}^{\infty}\!\!\int_{1}^{\infty}
w^{-\beta}v^{-\beta}e^{-a t\, wv}\,\mathrm{d}w\,\mathrm{d}v\right)\mathrm{d}t.
\end{align*}
Introduce
\[
K(x)=\int_{1}^{\infty}\!\!\int_{1}^{\infty} w^{-\beta}v^{-\beta}e^{-x wv}\,\mathrm{d}w\,\mathrm{d}v,
\qquad x>0,
\]
so that
\begin{equation}\label{eq:I1B-K}
I_1^{(B)}=(2n)^d\,\omega_d\,r_n^{d}\,a\,\beta^2\int_{0}^{1} t^{-d/\alpha}K(a t)\,\mathrm{d}t.
\end{equation}
As in the proof of Lemma~\ref{lem:I1A-asymptotics} (Step~2), a Tonelli argument and the substitution $u=vw$
show that, for $x>0$,
\begin{equation}\label{eq:K-log-form}
K(x)=\int_{1}^{\infty} u^{-\beta}e^{-x u}\log u\,\mathrm{d}u.
\end{equation}
We now evaluate \eqref{eq:I1B-K} using the small--$x$ asymptotics of \eqref{eq:K-log-form}.

\smallskip\noindent
First {consider $0<\beta<1$.}
From Lemma~\ref{lem:I1A-asymptotics} (Step~2), as $x\downarrow 0$,
\[
K(x)\sim \Gamma(1-\beta)\,x^{\beta-1}\log(1/x).
\]
Hence, uniformly for $t\in(0,1]$,
\[
K(at)\sim \Gamma(1-\beta)\,a^{\beta-1}t^{\beta-1}\bigl(\log(1/a)+\log(1/t)\bigr).
\]
Since $\alpha\beta>d$, we have $\beta-d/\alpha>0$ and therefore
$t^{\beta-1-d/\alpha}$ is integrable on $(0,1)$. Consequently,
\begin{align*}
\int_{0}^{1} t^{-d/\alpha}K(at)\,\mathrm{d}t
&\sim \Gamma(1-\beta)\,a^{\beta-1}
\int_{0}^{1} t^{\beta-1-d/\alpha}\bigl(\log(1/a)+\log(1/t)\bigr)\,\mathrm{d}t \\
&=\Gamma(1-\beta)\,a^{\beta-1}
\left(\frac{\log(1/a)}{\beta-d/\alpha}+\frac{1}{(\beta-d/\alpha)^2}\right),
\end{align*}
where we used $\int_0^1 t^{p-1}\,\mathrm{d}t=p^{-1}$ and
$\int_0^1 t^{p-1}\log(1/t)\,\mathrm{d}t=p^{-2}$ with $p=\beta-d/\alpha$.
Inserting into \eqref{eq:I1B-K} and using $r_n^d a^\beta=\lambda^\beta r_n^{d-\alpha\beta}$ yields
\[
I_1^{(B)}\sim (2n)^d\,\omega_d\,\beta^2\Gamma(1-\beta)\lambda^\beta\,
\frac{1}{\beta-d/\alpha}\,r_n^{d-\alpha\beta}\log(r_n^\alpha).
\]
Finally, $1/(\beta-d/\alpha)=\alpha/(\alpha\beta-d)$, which gives the claimed constant.

\smallskip\noindent
Now{ let $\beta=1$.}
From Lemma~\ref{lem:I1A-asymptotics} (Step~2), as $x\downarrow 0$,
\[
K(x)\sim \frac12\log^2(1/x).
\]
Thus
\[
\int_0^1 t^{-d/\alpha}K(at)\,\mathrm{d}t
\sim \frac12\int_0^1 t^{-d/\alpha}\bigl(\log(1/a)+\log(1/t)\bigr)^2\,\mathrm{d}t.
\]
The leading term is obtained by keeping $\log^2(1/a)$ and using
$\int_0^1 t^{-d/\alpha}\,\mathrm{d}t = (1-d/\alpha)^{-1}=\alpha/(\alpha-d)$, hence
\[
\int_0^1 t^{-d/\alpha}K(at)\,\mathrm{d}t
\sim \frac12\,\frac{\alpha}{\alpha-d}\,\log^2(1/a).
\]
Inserting into \eqref{eq:I1B-K} and using $r_n^d a=\lambda r_n^{d-\alpha}$ yields
\[
I_1^{(B)}\sim (2n)^d\,\omega_d\,\beta^2\,\lambda\,\frac12\,\frac{\alpha}{\alpha-d}\,
r_n^{d-\alpha}\log^2(r_n^\alpha),
\]
and since $\beta=1$ this is the asserted formula.

\smallskip\noindent
Now {assume $\beta>1$.}
By dominated convergence applied to \eqref{eq:K-log-form},
\[
K(x)\longrightarrow \int_1^\infty u^{-\beta}\log u\,\mathrm{d}u=\frac{1}{(\beta-1)^2},
\qquad x\downarrow 0.
\]
Therefore, again using $\int_0^1 t^{-d/\alpha}\,\mathrm{d}t=\alpha/(\alpha-d)$,
\[
\int_0^1 t^{-d/\alpha}K(at)\,\mathrm{d}t
\longrightarrow \frac{1}{(\beta-1)^2}\,\frac{\alpha}{\alpha-d}.
\]
Inserting into \eqref{eq:I1B-K} yields
\[
I_1^{(B)}\sim (2n)^d\,\omega_d\,r_n^{d}\,a\,\beta^2\,
\frac{1}{(\beta-1)^2}\,\frac{\alpha}{\alpha-d}
=(2n)^d\,\omega_d\,\beta^2\lambda\,\frac{\alpha}{\alpha-d}\,(\beta-1)^{-2}\,r_n^{d-\alpha}.
\]
This completes the proof.
\end{proof}

Recall that $\EE[F(n,r)]=I_1-I_2$, where $I_1$ counts exceedances between $B_n$ and $B_n^C$ and $I_2$ is the correction
counting pairs entirely contained in $B_n$.

\begin{lemma}\label{lem:I1-asymptotics}
We have, as $n\to\infty$,
\[
I_1 \sim (2n)^d\,\omega_d\,
\begin{cases}
\Gamma(1-\beta)\,\lambda^\beta\,\beta\,\dfrac{d}{\alpha\beta-d}\;
r_n^{\,d-\alpha\beta}\,\log(r_n^\alpha),
& 0<\beta<1,\\[2mm]
\dfrac12\,\lambda\,\dfrac{d}{\alpha-d}\;
r_n^{\,d-\alpha}\,\log^2(r_n^\alpha),
& \beta=1,\\[2mm]
\lambda\,\dfrac{\beta^2}{(\beta-1)^2}\,\dfrac{d}{\alpha-d}\;
r_n^{\,d-\alpha},
& \beta>1.
\end{cases}
\]
\end{lemma}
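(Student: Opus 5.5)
The plan is to read off the asymptotics of $I_1$ from the exact decomposition $I_1=-I_1^{(A)}+I_1^{(B)}$ together with Lemma~\ref{lem:I1A-asymptotics} and Lemma~\ref{lem:I1B-asymptotics}. Both terms are of the same order, $(2n)^d\omega_d$ times $r_n^{d-\alpha\beta}\log(r_n^\alpha)$, $r_n^{d-\alpha}\log^2(r_n^\alpha)$ or $r_n^{d-\alpha}$ respectively. So $I_1\sim I_1^{(B)}-I_1^{(A)}$ holds provided the difference of the leading constants is strictly positive. I will check this regime by regime. (Before that, I will briefly re-derive the polar-coordinates/integration-by-parts identity, since it fixes the sign convention.)

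\emph{Case $0<\beta<1$.} The leading constants are $\beta^2\Gamma(1-\beta)\lambda^\beta\,\frac{\alpha}{\alpha\beta-d}$ for $I_1^{(B)}$ and $\beta\lambda^\beta\Gamma(1-\beta)$ for $I_1^{(A)}$. Their difference is
\[
\beta\lambda^\beta\Gamma(1-\beta)\Bigl(\frac{\alpha\beta}{\alpha\beta-d}-1\Bigr)=\beta\lambda^\beta\Gamma(1-\beta)\frac{d}{\alpha\beta-d}>0,
\]
which is the claimed constant.

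\emph{Case $\beta=1$.} The constants are $\frac12\lambda\frac{\alpha}{\alpha-d}$ and $\frac12\lambda$, with difference $\frac12\lambda\frac{d}{\alpha-d}$.

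\emph{Case $\beta>1$.} Here I must use the final, correct form of $I_1^{(A)}$ from the end of the proof of Lemma~\ref{lem:I1A-asymptotics}, namely
\[
\lambda\Bigl(\frac{\beta}{\beta-1}+\frac{\beta}{(\beta-1)^2}\Bigr)=\lambda\frac{\beta^2}{(\beta-1)^2}.
\]
This agrees with the displayed statement of Lemma~\ref{lem:I1A-asymptotics}. The difference with $I_1^{(B)}$ is then $\lambda\frac{\beta^2}{(\beta-1)^2}\bigl(\frac{\alpha}{\alpha-d}-1\bigr)=\lambda\frac{\beta^2}{(\beta-1)^2}\frac{d}{\alpha-d}$.

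In each case, writing $I_1^{(A)}=a_n(c_A+o(1))$ and $I_1^{(B)}=a_n(c_B+o(1))$ with a common rate $a_n$ gives $I_1=a_n(c_B-c_A+o(1))$. Since $c_B-c_A>0$, this yields $I_1\sim(c_B-c_A)a_n$.

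The main obstacle is justifying the decomposition itself, in particular that the boundary term is exactly $-I_1^{(A)}$. Substituting $s=|y|^\alpha/r_n^\alpha$, i.e.\ $t=r_n^\alpha/|y|^\alpha\in(0,1]$, and integrating $r^{d-1}(1-e^{-\lambda wv r^{-\alpha}})$ by parts on $[r_n,\infty)$ gives
\[
d\omega_d\int_{r_n}^\infty r^{d-1}\bigl(1-e^{-\lambda wv/r^\alpha}\bigr)\,\mathrm dr=-\omega_d r_n^d\bigl(1-e^{-\lambda wv/r_n^\alpha}\bigr)+\omega_d r_n^d\frac{\lambda wv}{r_n^\alpha}\int_0^1 t^{-d/\alpha}e^{-\lambda wvt/r_n^\alpha}\,\mathrm dt.
\]
The boundary term at infinity vanishes because $r^d(1-e^{-cr^{-\alpha}})\le cr^{d-\alpha}\to0$, using $\alpha>d$. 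Integrating against $\mathrm dF_W(w)\,\mathrm dF_W(v)$ is legitimate by Tonelli, since each piece is finite for fixed $n$ (this is where $\alpha\beta>d$ enters for $\beta<1$). Hence the leading-order cancellation is genuine and not an artefact.
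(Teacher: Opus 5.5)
Your proposal is correct and is the same argument as the paper's: the decomposition $I_1=-I_1^{(A)}+I_1^{(B)}$, combined with Lemmas~\ref{lem:I1A-asymptotics} and \ref{lem:I1B-asymptotics}, with the leading constants subtracted regime by regime (your differences $c_B-c_A$ all check out, including the $\beta>1$ constant $\lambda\beta^2/(\beta-1)^2$ for $I_1^{(A)}$). Two details the paper leaves implicit are welcome additions: your explicit integration-by-parts derivation of the decomposition, including the vanishing boundary term at infinity since $\alpha>d$ and the finiteness that uses $\alpha\beta>d$, and your check that $c_B-c_A>0$, so that no leading-order cancellation occurs.
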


\begin{proof}
This is the combination of Lemmas~\ref{lem:I1A-asymptotics} and \ref{lem:I1B-asymptotics}, noting that
$I_1=-I_1^{(A)}+I_1^{(B)}$ and adding the leading constants in each regime.
\end{proof}

\begin{corollary}\label{cor:cn-choice}
Let $\theta:=\alpha\beta-d$ if $0<\beta<1$ and $\theta:=\alpha-d$ if $\beta\ge 1$. Define $c_n$ by
\[
c_n:=
\begin{cases}
\Bigl(2^d\omega_d\,\Gamma(1-\beta)\lambda^\beta\,\beta\,\dfrac{d}{\alpha\beta-d}\cdot \dfrac{\alpha d}{\theta}\Bigr)^{1/\theta}
\,n^{d/\theta}(\log n)^{1/\theta},
& 0<\beta<1,\\[2mm]
\Bigl(2^d\omega_d\,\dfrac12\lambda\,\dfrac{d}{\alpha-d}\cdot \dfrac{\alpha^2 d^2}{\theta^2}\Bigr)^{1/\theta}
\,n^{d/\theta}(\log n)^{2/\theta},
& \beta=1,\\[2mm]
\Bigl(2^d\omega_d\,\lambda\,\dfrac{\beta^2}{(\beta-1)^2}\,\dfrac{d}{\alpha-d}\Bigr)^{1/\theta}
\,n^{d/\theta},
& \beta>1,
\end{cases}
\]
and set $r_n=c_n r$ for fixed $r>0$. Then
\[
I_1 \sim r^{-\theta},
\qquad n\to\infty.
\]
\end{corollary}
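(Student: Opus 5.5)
The plan is to substitute $r_n=c_n r$ directly into the asymptotics of Lemma~\ref{lem:I1-asymptotics} and check that the constant in the definition of $c_n$ is chosen exactly so that everything cancels apart from $r^{-\theta}$. Lemma~\ref{lem:I1-asymptotics} applies because $c_n\to\infty$, hence $r_n\to\infty$, for every fixed $r>0$. In each regime the lemma gives $I_1\sim (2n)^d\,\omega_d\,A_\beta\, r_n^{-\theta}\,\ell_\beta(r_n)$. Here $A_\beta$ is the explicit constant of the lemma and $\theta=d-\alpha\beta$ or $\alpha-d$ up to sign, so that $r_n^{d-\alpha\beta}=r_n^{-\theta}$ for $\beta<1$ and $r_n^{d-\alpha}=r_n^{-\theta}$ for $\beta\ge 1$. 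The logarithmic factor is $\ell_\beta(r_n)=\log(r_n^\alpha)$, $\log^2(r_n^\alpha)$ or $1$ according to the regime. Since $r_n^{-\theta}=c_n^{-\theta}r^{-\theta}$, it suffices to show that $(2n)^d\omega_d A_\beta\,\ell_\beta(r_n)\,c_n^{-\theta}\to 1$.

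\emph{Finite-mean regime $\beta>1$.} Here $\ell_\beta\equiv 1$, and by definition
\[
c_n^{\theta}=2^d\omega_d\,\lambda\,\frac{\beta^2}{(\beta-1)^2}\,\frac{d}{\alpha-d}\,n^d=(2n)^d\omega_d A_\beta .
\]
The ratio is therefore identically $1$.

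\emph{Regimes $\beta\le 1$.} The logarithm has to be handled first. Since $\log c_n=\frac{d}{\theta}\log n+O(\log\log n)$ and $\log r$ is a constant, we get
\[
\log(r_n^\alpha)=\alpha\log c_n+\alpha\log r\sim \frac{\alpha d}{\theta}\log n .
\]
For $0<\beta<1$ this gives $\ell_\beta(r_n)\sim \frac{\alpha d}{\theta}\log n$. The definition of $c_n$ reads $c_n^\theta=2^d\omega_d A_\beta\cdot\frac{\alpha d}{\theta}\,n^d\log n$, so the ratio tends to $1$. For $\beta=1$ we get $\ell_1(r_n)\sim\frac{\alpha^2d^2}{\theta^2}\log^2 n$, which matches the factor $\frac{\alpha^2d^2}{\theta^2}(\log n)^2$ built into $c_n^\theta$. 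Again the ratio tends to $1$.

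The only point needing care, and the one I expect to be the (mild) main obstacle, is the logarithmic bookkeeping. One has to check that the $\log\log n$ contribution from the factor $(\log n)^{1/\theta}$ or $(\log n)^{2/\theta}$ in $c_n$, and the constant $\log r$, are $o(\log n)$. This is what makes them negligible in $\ell_\beta(r_n)$, and in the squared logarithm for $\beta=1$ the cross terms are $o(\log^2 n)$ for the same reason. Beyond that, the argument relies on the constants of Lemma~\ref{lem:I1-asymptotics}, i.e.\ the combination $-I_1^{(A)}+I_1^{(B)}$ from Lemmas~\ref{lem:I1A-asymptotics} and~\ref{lem:I1B-asymptotics}, exactly as stated. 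For instance, for $\beta<1$ one has $\beta^2\frac{\alpha}{\alpha\beta-d}-\beta=\beta\frac{d}{\alpha\beta-d}$, which is the constant $A_\beta$ appearing in $c_n$. Since the corollary only concerns $I_1$, no estimate on the correction term $I_2$ is required here.
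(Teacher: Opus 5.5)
Your proposal is correct and follows the paper's proof. You substitute $r_n=c_nr$ into Lemma~\ref{lem:I1-asymptotics}, note that the prefactor in $c_n^\theta$ is chosen so the constant cancels, and use $\log(r_n^\alpha)=\alpha\log c_n+O(1)\sim\frac{\alpha d}{\theta}\log n$ (squared when $\beta=1$); your handling of the $\log\log n$ and $\log r$ contributions is slightly more explicit than the paper's, which simply calls the cases $\beta=1$ and $\beta>1$ analogous.
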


\begin{proof}
Insert $r_n=c_n r$ into Lemma~\ref{lem:I1-asymptotics}. It suffices to check that the $n$--dependent factors cancel and
that the logarithmic terms satisfy the expected asymptotics.

If $0<\beta<1$, then $r_n^\theta=(c_n r)^\theta=c_n^\theta r^\theta$ and
\[
c_n^\theta = 2^d\omega_d\,\Gamma(1-\beta)\lambda^\beta\,\beta\,\frac{d}{\alpha\beta-d}\cdot \frac{\alpha d}{\theta}\;
n^d\log n.
\]
Moreover,
\[
\log(r_n^\alpha)=\alpha\log c_n + O(1)=\alpha\frac{d}{\theta}\log n + O(\log\log n),
\]
so $\log(r_n^\alpha)\sim \alpha(d/\theta)\log n$. Combining these relations yields, as $n\to \infty$, that
\[
(2n)^d\omega_d \cdot \Gamma(1-\beta)\lambda^\beta\,\beta\,\frac{d}{\alpha\beta-d}\;
r_n^{-\theta}\log(r_n^\alpha)\ \longrightarrow\ r^{-\theta}.
\]
The cases $\beta=1$ and $\beta>1$ are analogous: one uses
$\log(r_n^\alpha)\sim \alpha(d/\theta)\log n$ and $r_n^\theta\sim c_n^\theta r^\theta$, noting that the choice of $c_n$
was made so that the prefactor in Lemma~\ref{lem:I1-asymptotics} cancels. This proves the claim.
\end{proof}

\begin{theorem}\label{thm:mean-exceedances-limit}
Let $\theta:=\alpha\beta-d$ if $0<\beta<1$ and $\theta:=\alpha-d$ if $\beta\ge 1$, and assume that
\[
\theta\in(d,2d]\ \text{ if }\ \beta\le 1,
\qquad\text{and}\qquad
\theta\in(d,2d)\ \text{ if }\ \beta>1.
\]
Let $c_n$ be as in Corollary~\ref{cor:cn-choice} and set $r_n=c_n r$ for fixed $r>0$. Then $I_2=0$ for all $n$ large and
\[
\EE[F(n,r)] = I_1 - I_2 \longrightarrow r^{-\theta},
\qquad n\to\infty.
\]
\end{theorem}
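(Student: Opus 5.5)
The proof has two parts: showing that the correction term $I_2$ vanishes for large $n$, and reading off the limit of $I_1$ from Corollary~\ref{cor:cn-choice}.

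\emph{Vanishing of $I_2$.} Recall that
\[
I_2=\tfrac12\,\EE\Bigl[\sum_{x\in V\cap B_n}\sum_{y\in V\cap B_n}\mathbf{1}_{\{|x-y|\mathbf{1}_{\{x\leftrightarrow y\}}>r_n\}}\Bigr].
\]
Only pairs $x,y\in B_n$ with $|x-y|>r_n$ can contribute. Any two points of $B_n=[-n,n]^d$ satisfy $|x-y|\le 2\sqrt d\,n$. So it suffices to check that $r_n=c_n r>2\sqrt d\,n$ for all large $n$.

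By Corollary~\ref{cor:cn-choice}, $c_n$ is of order $n^{d/\theta}$, up to a constant and a power of $\log n$ with nonnegative exponent. The assumption $\theta\in(d,2d]$ gives $d/\theta\in[1/2,1)$, so $n^{d/\theta}$ alone does not beat $n$. I therefore expect this step to be the main obstacle: the hypothesis on $\theta$ seems to be oriented the wrong way for an immediate comparison. Under $\theta<d$ one would have $d/\theta>1$, and then $c_n/n\to\infty$.

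My plan is as follows. First, I would verify directly whether $c_n\gg n$ in the intended regime; this amounts to reading the hypothesis as guaranteeing $d/\theta>1$, consistent with the growth condition \eqref{eq:tn-growth-POT-thm}. In that reading the integrand of $I_2$ is identically zero once $c_n r>2\sqrt d\, n$, so $I_2=0$ for all large $n$, with the threshold depending on $r$.

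If instead $d/\theta<1$ genuinely occurs, I would fall back on showing $I_2=o(1)$. The conclusion $\EE[F(n,r)]\to r^{-\theta}$ does not need $I_2$ to vanish exactly. Since $0\le I_2\le I_1$ pointwise in the integrands, I would bound
\[
I_2\le \tfrac12\int_{B_n}\int_{B_n\setminus B_{r_n}(x)}\EE\bigl[1-e^{-\lambda W_xW_y/|x-y|^\alpha}\bigr]\,\d y\,\d x
\]
using Lemma~\ref{lem:Jr-upper2}. Radially, this integral is restricted to $r_n<|z|\le 2\sqrt d\,n$, and the region is empty when $r_n>2\sqrt d\,n$. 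Either way, $I_2$ is negligible relative to $I_1$.

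\emph{Limit of $I_1$.} This follows directly from Corollary~\ref{cor:cn-choice}, which gives $I_1\sim r^{-\theta}$. The corollary in turn rests on Lemma~\ref{lem:I1-asymptotics}, which combines Lemmas~\ref{lem:I1A-asymptotics} and \ref{lem:I1B-asymptotics}.

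The only additional check is that the logarithmic factors behave correctly after substituting $r_n=c_n r$. We have $\log(r_n^\alpha)=\alpha\log c_n+\alpha\log r\sim \alpha(d/\theta)\log n$, since $r$ is fixed. Hence the powers of $\log n$ built into $c_n$ cancel exactly against $\log(r_n^\alpha)$ for $\beta<1$ and against $\log^2(r_n^\alpha)$ for $\beta=1$. Combining the two parts gives $\EE[F(n,r)]=I_1-I_2\to r^{-\theta}$.
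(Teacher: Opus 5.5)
Your primary argument is the paper's proof: once $r_n$ exceeds the Euclidean diameter $2\sqrt d\,n$ of $B_n$, the integration domain of $I_2$ is empty, and $I_1\to r^{-\theta}$ is Corollary~\ref{cor:cn-choice}. Your threshold $2\sqrt d\,n$ is the right one; the paper's ``$r_n\ge 2n$'' is only enough for $d=1$, which is harmless once $r_n/n\to\infty$.

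Your diagnosis of the hypothesis is also correct. With $\theta\in(d,2d]$ one has $d/\theta<1$, hence $c_n/n\to 0$. So the paper's assertion that $r_n/n\to\infty$ ``under the stated assumptions'' is false as written. The intended hypothesis is the one in Theorem~\ref{thm:uncond-max}: $\alpha\beta\in(d,2d]$, $\alpha\in(d,2d]$, resp.\ $\alpha\in(d,2d)$. Equivalently, $\theta\in(0,d]$ for $\beta\le 1$ and $\theta\in(0,d)$ for $\beta>1$. This includes $\theta=d$ when $\beta\le 1$. There $d/\theta=1$, and only the factor $(\log n)^{1/\theta}$ resp.\ $(\log n)^{2/\theta}$ in $c_n$ gives $c_n/n\to\infty$. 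That is exactly why $\theta=d$ must be excluded for $\beta>1$, so reading the hypothesis as ``$d/\theta>1$'' is slightly too narrow.

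Your fallback, however, is wrong, and it should be dropped rather than offered as a second branch. If $r_n=o(n)$, then $I_2$ is \emph{not} negligible. The tail integral $\int_{|z|>r_n}\EE[p]\,\mathrm{d}z$ is carried by $|z|$ of order $r_n$. Its part over $|z|>R$ is a fraction of order $(R/r_n)^{-\theta}$, up to a bounded ratio of logarithms when $\beta\le 1$. Choose $R$ with $r_n\ll R\ll n$, e.g.\ $R=\sqrt{r_n n}$.
\begin{itemize}
\item Every $x\in B_n$ at distance at least $R$ from $\partial B_n$ has $B_n\setminus B_{r_n}(x)\supset\{r_n<|y-x|<R\}$, so its inner integral is $(1-o(1))\int_{|z|>r_n}\EE[p]\,\mathrm{d}z$.
\item The boundary layer of $B_n$ has volume $O(n^{d-1}R)=o(n^d)$.
\end{itemize}
Hence $2I_2\sim I_1$, so $I_2\to\tfrac12 r^{-\theta}$ and $\EE[F(n,r)]\to\tfrac12 r^{-\theta}$. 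The bound $0\le I_2\le I_1$ gives boundedness only. Restricting radially to $r_n<|z|\le 2\sqrt d\,n$ removes almost none of the mass, so ``either way, $I_2$ is negligible'' fails.

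Under the literal hypothesis the statement is therefore false. The correct fix is to amend the hypothesis as above, and under that hypothesis your first argument is complete.
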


\begin{proof}
By Corollary~\ref{cor:cn-choice}, $I_1\to r^{-\theta}$. It remains to show that $I_2=0$ for $n$ large. Under the stated
assumptions on $\theta$, the scaling $r_n=c_n r$ satisfies $r_n/n\to\infty$, hence $r_n\ge 2n$ for all sufficiently large $n$.
For such $n$ and any $x\in B_n$, we have $B_n\subset B_{r_n}(x)$, equivalently $B_n\cap B_{r_n}^C(x)=\varnothing$, so there
are no exceedance pairs entirely contained in $B_n$. Thus $I_2=0$ for all large $n$, and the limit follows.
\end{proof}

Finally the proof of Theorem \ref{thm:uncond-max} follows from Proposition \ref{prop:Poisson-approx-Fnr} combined with Theorem \ref{thm:mean-exceedances-limit}.

\section{Proof of Theorem \ref{thm:POT-hub}}\label{proofthm2}

Recall that we denote $W_n^*=\max_{x\in V\cap B_n}W_x$. We start this section with the following observation.

\begin{lemma}\label{le:TailW*n}
It holds
\[\mathbb{P}_0\left(W_n^*\leq d_n\right)=\left(1-d_n^{-\beta}\right)\exp\left(-(2n)^dd_n^{-\beta}\right)\sim\left(1-d_n^{-\beta}\right)^{(2n)^d+1}\]
\end{lemma}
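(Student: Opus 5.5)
Under the Palm measure $\PP_0$ the vertex set is $V=\mathcal P\cup\{0\}$, with $\mathcal P$ a unit-intensity Poisson process. The origin carries a weight $W_0$, and every point of $\mathcal P$ carries an i.i.d.\ weight, all independent of $\mathcal P$. Since $0\in B_n$, the event $\{W_n^*\le d_n\}$ splits into two independent events: $\{W_0\le d_n\}$ and the event that no point of $\mathcal P\cap B_n$ has weight exceeding $d_n$. The first has probability $\PP(W\le d_n)=1-d_n^{-\beta}$, since $d_n\ge 1$ eventually. This gives the factor $(1-d_n^{-\beta})$ in the identity.

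For the second factor I would use the marking (thinning) theorem. The points of $\mathcal P\cap B_n$ whose weight exceeds $d_n$ form a Poisson process with intensity $\PP(W>d_n)\,\mathrm{d}x=d_n^{-\beta}\,\mathrm{d}x$ on $B_n$. Its total count is Poisson with mean $|B_n|\,d_n^{-\beta}=(2n)^d d_n^{-\beta}$, so the probability of no such point is $\exp(-(2n)^d d_n^{-\beta})$. Equivalently, one can condition on $N=|\mathcal P\cap B_n|\sim\mathrm{Poisson}((2n)^d)$ and evaluate the generating function $\EE[(1-d_n^{-\beta})^N]=\exp(-(2n)^d d_n^{-\beta})$. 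Multiplying the two factors gives the exact identity.

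For the asymptotic equivalence, compare $\exp(-(2n)^d d_n^{-\beta})$ with $(1-d_n^{-\beta})^{(2n)^d}$. The logarithm of their ratio is
\[
(2n)^d\bigl(\log(1-d_n^{-\beta})+d_n^{-\beta}\bigr)=O\bigl((2n)^d d_n^{-2\beta}\bigr).
\]
This tends to $0$ provided $n^d d_n^{-2\beta}\to0$. In the regime of Remark~\ref{re:intuitionmaxweights}, where $d_n^\beta/n^d\to\infty$, this certainly holds, and then in fact both sides tend to $1$. The only point needing care is therefore this mild growth condition on $d_n$ relative to $n$. I would state it explicitly, or note that it follows from \eqref{eq:tn-growth-POT-thm} together with the definition of $d_n$ in Theorem~\ref{thm:POT-hub}. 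Since $d_n^\beta$ is of order $c_n$ up to logarithms and $c_n\gg n^{d}$, the condition holds in all three regimes of $\beta$. No other obstacle is expected.
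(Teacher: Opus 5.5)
Your derivation of the exact identity follows the paper's argument. The paper conditions on $\#(V\cap B_n\setminus\{0\})\sim\mathrm{Poisson}((2n)^d)$ and sums $(1-d_n^{-\beta})^{k+1}$ against the Poisson weights, which is your generating-function variant; your thinning argument is an equivalent repackaging.

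You differ from the paper on the asymptotic equivalence, and your version is the more careful one. The paper justifies $\sim$ only by $\ln(1-d_n^{-\beta})\sim -d_n^{-\beta}$. That gives equivalence of the logarithms of the two sides, not of the sides themselves. As you compute, the logarithm of the ratio is $(2n)^d\bigl(\log(1-d_n^{-\beta})+d_n^{-\beta}\bigr)\sim -\tfrac12 (2n)^d d_n^{-2\beta}$. So the equivalence holds if and only if $n^d d_n^{-2\beta}\to 0$. For example, with $d_n^\beta=n^{d/2}$ (so $d_n\to\infty$), the left side divided by the right side tends to $e^{2^{d-1}}\neq 1$.

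Your extra growth condition is therefore genuinely needed. It holds in the regime $d_n^\beta/n^d\to\infty$ used in the proof of Theorem~\ref{thm:POT-hub}, where both sides simply tend to $1$. One small correction: for $\beta>1$ one has $d_n^\beta\asymp c_n^\beta$, not $c_n$ up to logarithms, but this only strengthens your conclusion. Note also that the later arguments in the paper use only the exact identity, not the $\sim$ part.
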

\begin{proof}
We have
\begin{align*}
\mathbb{P}_0\left(W_n^*\leq d_n\right)&=\sum_{k=0}^\infty\mathbb{P}_0\left(W_n^*\leq d_n\vert \# \left(V\cap B_n\setminus\{0\}\right)=k\right)\mathbb{P}_0\left(\#\left(V\cap B_n\setminus\{0\}\right)=k\right)\\
&=\sum_{k=0}^\infty\mathbb{P}_0\left(W_0\leq d_n\right)^{k+1}\mathbb{P}_0\left(\#\left(V\cap B_n\setminus\{0\}\right)=k\right)\\
&=(1-d_n^{-\beta})\sum_{k=0}^\infty(1-d_n^{-\beta})^{k}\mathbb{P}_0\left(\#\left(V\cap B_n\setminus\{0\}\right)=k\right)\\
&=(1-d_n^{-\beta})\exp\left(-(2n)^dd_n^{-\beta}\right)
\end{align*}
since $\#\left(V\cap B_n\setminus\{0\}\right)$ is Poisson distributed with mean $(2n)^d$. The second claim follows from the classical approximation $\ln(1-d_n^{-\beta})\sim -d_n^{-\beta}$ (since $d_n\nearrow\infty$).
\end{proof}
\begin{remark}\label{re:maxweights}
It holds
\[\mathbb{P}_0\left(W_n^*\leq d_n\right)\longrightarrow\left\{\begin{matrix}
0&\mbox{if }n^{-d}d_n^{\beta}\rightarrow 0\\
1&\mbox{if }n^{-d}d_n^{\beta}\rightarrow \infty\\
\exp ( - \frac{2^d}{c}) &\mbox{if }n^{-d}d_n^{\beta}\rightarrow c \in (0,\infty)
\end{matrix}\right.\]
\end{remark}

\subsection{Proof of Lemma \ref{lem:POT-rooted-unrooted}}

We first consider 
$$ \mathbb{P}_0\left(e^*_0>t_n\vert W_0>d_n\right) .$$
We have
\begin{align*}
\mathbb{P}_0\left(e^*_0>t_n\vert W_0>d_n\right) 
&= \mathbb{P}_0\left(e^*_0>t_n,\, W_0>d_n\right) \mathbb{P}_0\left(W_0>d_n\right)^{-1}\\
&= \int_{d_n}^\infty \mathbb{P}_0\left(e^*_0>t_n\vert W_0=w\right) \, \d F_{W}(w)d_n^\beta\\
&\color{black}= \int_{d_n}^\infty 1 - \exp\left(-\mathbb{E}_0\left(\int_{B_{t_n}^c(0)}\left(1-\exp\left(-\frac{\lambda W_yw}{\vert y\vert^\alpha}\right)\right)\, \d y\right)\right) \, \d F_{W_0}(w)d_n^\beta\\
& =: I_n.
%&\color{black} = (I_1 + I_2)d_n^\beta,
%\end{align*}
%where $I_1$ is the corresponding integral from $d_n$ to $f_n$, and $I_2$ is the corresponding integral from $f_n$ to $\infty$. We show that $I_2$ is a negligible part:
%\begin{align*}
%I_2 &= \int_{f_n}^\infty 1 - \exp\left(-\mathbb{E}_0\left(\int_{B_{t_n}^c(0)}\left(1-\exp\left(-\frac{\lambda W_yw}{\vert y\vert^\alpha}\right)\right)\, \d y\right)\right) \, \d F_{W_0}(w)\\
%&\color{black} = O \Big( t_n^{d-\alpha} \int_{f_n}^\infty w \d F_{W_0}(w)\Big) = O (t_n^{d-\alpha} f_n^{1-\beta} ).
%\end{align*}
%On the other hand, using that $\frac{f_n}{t^{\alpha}_n} \to 0$, we have
%\begin{align*}
%I_1 &= \int_{d_n}^{f_n} 1 - \exp\left(-\mathbb{E}_0\left(\int_{B_{t_n}^c(0)}\left(1-\exp\left(-\frac{\lambda W_yw}{\vert y\vert^\alpha}\right)\right)\, \d y\right)\right) \, \d F_{W_0}(w)\\
%& \color{black}\sim \int_{d_n}^{\infty} \mathbb{E}_0\left(\int_{B_{t_n}^c(0)}\frac{\lambda W_yw}{\vert y\vert^\alpha} \, \d y \right)  \, \d F_{W_0}(w) d_n^\beta\\
%&= \lambda\mathbb{E}_0\left(W\right) \int_{B_{t_n}^c(0)}\vert y\vert^{-\alpha} \, \d y \int_{d_n}^{\infty} w \, \d F_{W_0}(w) d_n^\beta\\
%&= \lambda\frac{\beta}{\beta-1}d\omega_d \int_{t_n}^\infty s^{d-\alpha-1}\, \d s \int_{d_n}^{\infty} w \, w^{-\beta-1}\, \d w d_n^\beta\\
%&= \frac{\lambda\beta d\omega_d}{(\beta-1)(\alpha-d)}t_n^{d-\alpha}\frac{1}{\beta-1} d_n.\\
%&% \sim \frac{\lambda\beta d\omega_d}{(\beta-1)(\alpha-d)}t_n^{d-\alpha}\frac{1}{\beta-1} d_n{},
\end{align*}

The next lemma derives the precise asymptotics of the expression $I_n$.

\begin{lemma}\label{lem:In-limit-allbeta-gpd}
Fix $t>0$ and let $c_n\to\infty$. Define
\[
\theta=
\begin{cases}
\alpha\beta-d, & 0<\beta<1,\\
\alpha-d, & \beta\ge 1,
\end{cases}
\qquad
\gamma:=\min\{1,\beta\},
\qquad
t_n=t_n(t):=c_n^{1/\theta}\Bigl(1+\frac{t}{\theta}\Bigr).
\]
Set
\[
d_n:=
\begin{cases}
\kappa_\beta^{-1}\,c_n^{1/\beta}, & 0<\beta<1,\\[1mm]
\kappa_1^{-1}\,c_n/\log c_n, & \beta=1,\\[1mm]
\kappa_\beta^{-1}\,c_n, & \beta>1,
\end{cases}
\]
where the (explicit) constants $\kappa_\beta\in(0,\infty)$ are given by
\[
\kappa_\beta:=
\begin{cases}
\bigl(\Gamma(1-\beta)^2\,\lambda^\beta\,\dfrac{d\omega_d}{\alpha\beta-d}\bigr)^{1/\beta}, & 0<\beta<1,\\[2mm]
\lambda\,\dfrac{d\omega_d}{\alpha-d}\,\dfrac{d}{\alpha-d}, & \beta=1,\\[2mm]
\lambda\,\dfrac{d\omega_d}{\alpha-d}\,\dfrac{\beta^2}{(\beta-1)^2}, & \beta>1.
\end{cases}
\]
For $w\ge 1$ define
\[
\Phi_n(w):=\EE\Bigg[\int_{|y|>t_n}\Bigl(1-\exp\Bigl\{-\lambda w W/|y|^\alpha\Bigr\}\Bigr)\,\mathrm{d}y\Bigg],
\]
and set
\begin{equation}\label{eq:In-def-slv}
I_n=\beta d_n^\beta\int_{d_n}^{\infty}
\Phi_n(w)\Bigl(\int_{0}^{1}e^{-s\Phi_n(w)}\,\mathrm{d}s\Bigr)\,w^{-\beta-1}\,\mathrm{d}w.
\end{equation}
Define, for $t>0$,
\[
a_\beta(t):=
\begin{cases}
\Gamma(1-\beta)^{-1}\Bigl(1+\dfrac{t}{\theta}\Bigr)^{-\theta}, & 0<\beta<1,\\[2mm]
\Bigl(1+\dfrac{t}{\theta}\Bigr)^{-\theta}, & \beta=1,\\[2mm]
\dfrac{\beta-1}{\beta}\Bigl(1+\dfrac{t}{\theta}\Bigr)^{-\theta}, & \beta>1,
\end{cases}
\]
and
\begin{equation*}\label{eq:Gbeta-def-slv}
G_\beta(t):=\beta\int_{1}^{\infty}\bigl(1-e^{-a_\beta(t)v^\gamma}\bigr)\,v^{-\beta-1}\,\mathrm{d}v.
\end{equation*}
Then, as $n\to\infty$,
\[
I_n\ \longrightarrow\ G_\beta(t).
\]
Moreover, as $t\to\infty$, $G_\beta(t)\downarrow 0$ and
\[
G_\beta(t)\ \sim\
\begin{cases}
\Gamma(1-\beta)\,a_\beta(t), & 0<\beta<1,\\[1mm]
a_1(t)\,\log\!\bigl(1/a_1(t)\bigr), & \beta=1,\\[1mm]
\dfrac{\beta}{\beta-1}\,a_\beta(t), & \beta>1.
\end{cases}
\]
Moreover, as $t\to\infty$,
\[
G_\beta(t)\ \sim\ \Bigl(1+\frac{t}{\theta}\Bigr)^{-\theta},
\qquad \beta\neq 1,
\]
whereas in the borderline case $\beta=1$,
\[
G_1(t)\ \sim\ \theta\Bigl(1+\frac{t}{\theta}\Bigr)^{-\theta}\log\!\Bigl(1+\frac{t}{\theta}\Bigr).
\]

\end{lemma}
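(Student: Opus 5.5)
The natural approach is a change of variables $w=d_n v$ in \eqref{eq:In-def-slv}, followed by dominated convergence once the rescaled function $\Phi_n(d_n v)$ is identified. This gives
\[
I_n=\beta\int_1^\infty \Phi_n(d_nv)\Bigl(\int_0^1 e^{-s\Phi_n(d_nv)}\,\mathrm{d}s\Bigr)v^{-\beta-1}\,\mathrm{d}v,
\]
and since $x\int_0^1e^{-sx}\,\mathrm{d}s=1-e^{-x}$, the integrand equals $\beta(1-e^{-\Phi_n(d_nv)})v^{-\beta-1}$. So it suffices to prove that, for every fixed $v\ge1$,
\[
\Phi_n(d_nv)\longrightarrow a_\beta(t)\,v^\gamma .
\]
Convergence of $I_n$ then follows by dominated convergence, since the integrand is bounded by $\beta v^{-\beta-1}$, which is integrable on $[1,\infty)$.

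The key step is the pointwise asymptotics of $\Phi_n(w)$ with $w=d_nv$. After polar coordinates,
\[
\Phi_n(w)=d\omega_d\int_{t_n}^\infty r^{d-1}\,\EE\bigl[1-e^{-\lambda wW/r^\alpha}\bigr]\,\mathrm{d}r .
\]
By the single-variable computation in Step 1 of Lemma~\ref{lem:I1A-asymptotics}, with $a=\lambda w/r^\alpha$,
\[
\EE[1-e^{-aW}] \sim
\begin{cases}
\Gamma(1-\beta)\,a^\beta, & \beta<1,\\
a\log(1/a), & \beta=1,\\
\dfrac{\beta}{\beta-1}\,a, & \beta>1,
\end{cases}
\]
provided $a\to0$ uniformly for $r\ge t_n$. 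This requires $d_n=o(t_n^\alpha)$, which holds for the chosen $d_n$ because $t_n^\alpha\asymp c_n^{\alpha/\theta}$ and $\alpha/\theta>1$, respectively $\alpha/\theta>1/\beta$ when $\beta<1$. Integrating in $r$ gives
\[
\Phi_n(w)\sim d\omega_d\,\frac{C_\beta\,(\lambda w)^\gamma}{\theta}\,t_n^{-\theta}\,\ell_n ,
\]
where $C_\beta$ is the constant above and $\ell_n$ is $1$ if $\beta\ne1$ and $\log(t_n^\alpha/w)$ if $\beta=1$. Now insert $w=d_nv$ and $t_n^{-\theta}=c_n^{-1}(1+t/\theta)^{-\theta}$. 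For $\beta\ne1$ the factor $d_n^\gamma/c_n$ is a constant by the choice of $d_n$, which produces $a_\beta(t)\,v^\gamma$. For $\beta=1$ one checks that $\log(t_n^\alpha/d_n)\sim(\alpha/\theta-1)\log c_n=(d/\theta)\log c_n$, and this cancels the $\log c_n$ in $d_n$. The remaining task is bookkeeping of the constants $\kappa_\beta$ against $\Gamma(1-\beta)$, $\beta/(\beta-1)$ and $d/\theta$.

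For the tail statements, substitute into the formula for $G_\beta$ and let $a=a_\beta(t)\to0$ as $t\to\infty$. Then
\[
G_\beta(t)=\beta\int_1^\infty(1-e^{-av^\gamma})\,v^{-\beta-1}\,\mathrm{d}v .
\]
For $\beta>1$ ($\gamma=1$) this is exactly the Step 1 computation of Lemma~\ref{lem:I1A-asymptotics}, giving $\frac{\beta}{\beta-1}a$. For $\beta=1$ it gives $a\log(1/a)$. For $\beta<1$ ($\gamma=\beta$), substitute $u=av^\beta$:
\[
G_\beta(t)=a\int_a^\infty(1-e^{-u})\,u^{-2}\,\mathrm{d}u ,
\]
which for $\beta<1$ behaves like $a\log(1/a)$ rather than $\Gamma(1-\beta)a$. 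I would therefore recheck this case carefully and, if needed, correct the constant or exponent. Monotonicity in $t$ is immediate, since $a_\beta$ is decreasing. Inserting the definition of $a_\beta$ yields the generalized Pareto tails, and for $\beta=1$, $\log(1/a_1)=\theta\log(1+t/\theta)$.

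The main obstacle is the uniform control of the small-$a$ asymptotics inside the $r$-integral in the borderline case $\beta=1$. There the factor $\log(r^\alpha/w)$ must be integrated against $r^{d-1-\alpha}$ and shown to concentrate at $r\approx t_n$. I would handle this by splitting the range at $r=Mt_n$, bounding the part beyond $Mt_n$ by Lemma~\ref{lem:Jr-upper}-type estimates, and then letting $M\to\infty$.
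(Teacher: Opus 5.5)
Your proof of $I_n\to G_\beta(t)$ follows the paper's route: rescale $w=d_nv$, use $x\int_0^1e^{-sx}\,\mathrm{d}s=1-e^{-x}$, obtain $\Phi_n(d_nv)\to a_\beta(t)v^\gamma$ from the small-$x$ asymptotics of $\psi_\beta(x)=\EE[1-e^{-xW}]$, and conclude by dominated convergence. Two of your details are cleaner than the paper's. Your majorant $\beta v^{-\beta-1}$ needs no bound on $\Phi_n(d_nv)$ that is uniform in $v$, unlike the paper's $\min\{Cv^\gamma,1\}v^{-\beta-1}$. For $0<\beta<1$ the condition $\alpha/\theta>1/\beta$ is the right check; the paper only cites $\alpha/\theta>1$, which does not suffice when $d_n\asymp c_n^{1/\beta}$.

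The $\beta=1$ step is not really an obstacle. Since $\lambda d_nv/|y|^\alpha\le\lambda d_nv/t_n^\alpha\to0$ uniformly on $\{|y|>t_n\}$, the relation $\psi_1(x)=x\log(1/x)(1+o(1))$ holds uniformly there. The piece $\log(|y|^\alpha/t_n^\alpha)$ of the logarithm then contributes only $O(d_nt_n^{-\theta})=O(1/\log c_n)$, so no cut-off at $Mt_n$ is needed.

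On the tail for $0<\beta<1$, do not hedge: your computation is right, and the statement is false there. With $\gamma=\beta$, the substitution $u=av^\beta$ gives exactly
\[
G_\beta(t)=a\int_a^\infty(1-e^{-u})\,u^{-2}\,\mathrm{d}u,\qquad a=a_\beta(t).
\]
This is the same function of $a$ as $G_1$. Hence $G_\beta(t)=a\log(1/a)+O(a)$, that is,
\[
G_\beta(t)\sim \frac{\theta}{\Gamma(1-\beta)}\Bigl(1+\frac{t}{\theta}\Bigr)^{-\theta}\log\Bigl(1+\frac{t}{\theta}\Bigr),\qquad t\to\infty .
\]
The paper's Step 6 reaches $\Gamma(1-\beta)\,a_\beta(t)$ only through a faulty change of variables. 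With $u=a^{1/\beta}v$ the integral becomes $\beta a\int_{a^{1/\beta}}^\infty(1-e^{-u^\beta})u^{-\beta-1}\,\mathrm{d}u$, not $\beta a\int_{a}^\infty(1-e^{-u})u^{-\beta-1}\,\mathrm{d}u$. The correct integrand behaves like $u^{-1}$ near $0$, which produces the logarithm.

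So the claims $G_\beta\sim\Gamma(1-\beta)a_\beta$ and $G_\beta\sim(1+t/\theta)^{-\theta}$ for $\beta<1$ cannot be proved. What your argument does establish is:
\begin{itemize}
\item $I_n\to G_\beta(t)$ for all $\beta$;
\item the stated tails for $\beta\ge1$;
\item for $\beta<1$, a tail that is regularly varying with index $-\theta$ and carries a logarithmic slowly varying factor, exactly as in the borderline case.
\end{itemize}
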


\begin{proof}
\emph{Step 1: rescaling in \eqref{eq:In-def-slv}.}
Substitute $w=d_nv$ with $v\in[1,\infty)$. Since $\mathrm{d}w=d_n\,\mathrm{d}v$ and
$w^{-\beta-1}=(d_nv)^{-\beta-1}$, the factor $d_n^\beta$ cancels and we obtain
\begin{equation}\label{eq:In-rescaled-slv}
I_n=\beta\int_{1}^{\infty}
\Phi_n(d_nv)\Bigl(\int_{0}^{1}e^{-s\Phi_n(d_nv)}\,\mathrm{d}s\Bigr)\,v^{-\beta-1}\,\mathrm{d}v.
\end{equation}

\medskip
\emph{Step 2: preliminary facts on $\psi_\beta(x):=\EE[1-e^{-xW}]$.}
By the Pareto density and Tonelli,
\[
\psi_\beta(x)=\beta\int_{1}^{\infty}(1-e^{-xu})u^{-\beta-1}\,\mathrm{d}u,\qquad x\ge 0.
\]
We record the small--$x$ asymptotics and convenient bounds.

\smallskip
\noindent\textit{(i) Case $0<\beta<1$.}
With the substitution $s=xu$,
\[
\psi_\beta(x)
=\beta x^\beta\int_{x}^{\infty}(1-e^{-s})s^{-\beta-1}\,\mathrm{d}s.
\]
Since $\int_{0}^{\infty}(1-e^{-s})s^{-\beta-1}\,\mathrm{d}s=\Gamma(1-\beta)/\beta$ for $0<\beta<1$, we obtain
\begin{equation}\label{eq:psi-beta<1-slv}
\psi_\beta(x)\sim \Gamma(1-\beta)x^\beta,\qquad x\downarrow 0.
\end{equation}
Moreover, for all $x\in(0,1]$,
\begin{equation}\label{eq:psi-beta<1-bound-slv}
\psi_\beta(x)\le C x^\beta,
\end{equation}
since the integral above is bounded uniformly in $x\in(0,1]$.

\smallskip
\noindent\textit{(ii) Case $\beta=1$.}
Again substituting $s=xu$ gives
\[
\psi_1(x)=x\int_{x}^{\infty}(1-e^{-s})s^{-2}\,\mathrm{d}s.
\]
Integrating by parts,
\[
\int_{x}^{\infty}(1-e^{-s})s^{-2}\,\mathrm{d}s
=\frac{1-e^{-x}}{x}+\int_{x}^{\infty}e^{-s}s^{-1}\,\mathrm{d}s,
\]
hence
\begin{equation}\label{eq:psi-beta=1-slv}
\psi_1(x)=x\log(1/x)+O(x),\qquad x\downarrow 0.
\end{equation}
Moreover, for all $x\in(0,1]$,
\begin{equation}\label{eq:psi-beta=1-bound-slv}
\psi_1(x)\le C x\bigl(1+\log(1/x)\bigr).
\end{equation}

\smallskip
\noindent\textit{(iii) Case $\beta>1$.}
Since $\EE[W]=\beta/(\beta-1)<\infty$ and $0\le (1-e^{-xW})/(xW)\le 1$,
dominated convergence yields
\begin{equation}\label{eq:psi-beta>1-slv}
\psi_\beta(x)\sim x\EE[W]=x\,\frac{\beta}{\beta-1},\qquad x\downarrow 0,
\end{equation}
and the bound
\begin{equation}\label{eq:psi-beta>1-bound-slv}
\psi_\beta(x)\le Cx,\qquad x\in(0,1].
\end{equation}

\medskip
\emph{Step 3: pointwise limit of $\Phi_n(d_nv)$.}
By Tonelli,
\begin{equation*}\label{eq:Phi-repr-slv}
\Phi_n(d_nv)=\int_{|y|>t_n}\psi_\beta\!\Bigl(\frac{\lambda d_nv}{|y|^\alpha}\Bigr)\,\mathrm{d}y.
\end{equation*}
Fix $v\ge 1$. We first show that the argument of $\psi_\beta$ is uniformly small on $\{|y|>t_n\}$:
\begin{equation}\label{eq:arg-small-slv}
\sup_{|y|>t_n}\frac{\lambda d_nv}{|y|^\alpha}\le \frac{\lambda d_nv}{t_n^\alpha}\longrightarrow 0,
\qquad n\to\infty.
\end{equation}
Indeed, $t_n^\alpha=c_n^{\alpha/\theta}(1+t/\theta)^\alpha$, while $d_n$ grows at most polynomially in $c_n$:
for $\beta>1$, $d_n\asymp c_n$; for $\beta=1$, $d_n\asymp c_n/\log c_n$; for $0<\beta<1$, $d_n\asymp c_n^{1/\beta}$.
In all cases $\alpha/\theta>1$, hence $d_n/t_n^\alpha\to 0$.

\smallskip
We now distinguish the three cases.

\smallskip
\noindent\textit{Case $0<\beta<1$.}
From \eqref{eq:psi-beta<1-slv} and \eqref{eq:arg-small-slv}, uniformly for $|y|>t_n$,
\[
\psi_\beta\!\Bigl(\frac{\lambda d_nv}{|y|^\alpha}\Bigr)
\sim \Gamma(1-\beta)\lambda^\beta d_n^\beta v^\beta |y|^{-\alpha\beta}.
\]
Dominated convergence applies using \eqref{eq:psi-beta<1-bound-slv} and \eqref{eq:arg-small-slv}, yielding
\[
\Phi_n(d_nv)\sim \Gamma(1-\beta)\lambda^\beta d_n^\beta v^\beta
\int_{|y|>t_n}|y|^{-\alpha\beta}\,\mathrm{d}y.
\]
Evaluating the spatial integral in polar coordinates,
\[
\int_{|y|>t_n}|y|^{-\alpha\beta}\,\mathrm{d}y
=d\omega_d\int_{t_n}^{\infty} r^{d-\alpha\beta-1}\,\mathrm{d}r
=\frac{d\omega_d}{\alpha\beta-d}\,t_n^{d-\alpha\beta}
=\frac{d\omega_d}{\theta}\,t_n^{-\theta}.
\]
Since $t_n^{-\theta}=c_n^{-1}(1+t/\theta)^{-\theta}$ and $d_n^\beta=\kappa_\beta^{-\beta}c_n$, we obtain
\[
\Phi_n(d_nv)\longrightarrow
\Gamma(1-\beta)\lambda^\beta\frac{d\omega_d}{\theta}\,\kappa_\beta^{-\beta}\,
\Bigl(1+\frac{t}{\theta}\Bigr)^{-\theta}v^\beta.
\]
With $\kappa_\beta^\beta=\Gamma(1-\beta)^2\lambda^\beta\,\dfrac{d\omega_d}{\theta}$ this limit equals
$\Gamma(1-\beta)^{-1}(1+t/\theta)^{-\theta}v^\beta=a_\beta(t)v^\beta$.

\smallskip
\noindent\textit{Case $\beta=1$.}
Write $x=\lambda d_nv/|y|^\alpha$. By \eqref{eq:psi-beta=1-slv} and \eqref{eq:arg-small-slv},
\[
\psi_1(x)=x\log(1/x)\,(1+o(1))
\qquad\text{uniformly for }|y|>t_n,
\]
and dominated convergence using \eqref{eq:psi-beta=1-bound-slv} gives
\begin{equation}\label{eq:Phi-beta=1-main-slv}
\Phi_n(d_nv)
= \lambda d_n v\int_{|y|>t_n}|y|^{-\alpha}\log\!\Bigl(\frac{|y|^\alpha}{\lambda d_nv}\Bigr)\,\mathrm{d}y\,(1+o(1)).
\end{equation}
Split the logarithm as
\[
\log\!\Bigl(\frac{|y|^\alpha}{\lambda d_nv}\Bigr)
=\log\!\Bigl(\frac{|y|^\alpha}{t_n^\alpha}\Bigr)+\log\!\Bigl(\frac{t_n^\alpha}{\lambda d_nv}\Bigr).
\]
The first term contributes $O(\lambda d_n v\,t_n^{d-\alpha})$ since
$\int_{|y|>t_n}|y|^{-\alpha}\log(|y|^\alpha/t_n^\alpha)\,\mathrm{d}y=O(t_n^{d-\alpha})$.
For the second term,
\[
\int_{|y|>t_n}|y|^{-\alpha}\,\mathrm{d}y
=\frac{d\omega_d}{\alpha-d}\,t_n^{d-\alpha}
=\frac{d\omega_d}{\theta}\,t_n^{-\theta}
=\frac{d\omega_d}{\theta}\,c_n^{-1}\Bigl(1+\frac{t}{\theta}\Bigr)^{-\theta}.
\]
Moreover, since $t_n^\alpha=c_n^{\alpha/\theta}(1+t/\theta)^\alpha$ and $d_n=\kappa_1^{-1}c_n/\log c_n$,
\[
\log\!\Bigl(\frac{t_n^\alpha}{d_n}\Bigr)
=\Bigl(\frac{\alpha}{\theta}-1\Bigr)\log c_n + \log\log c_n + O(1)
=\frac{d}{\theta}\log c_n + o(\log c_n),
\]
and hence
\[
\frac{1}{\log c_n}\log\!\Bigl(\frac{t_n^\alpha}{\lambda d_nv}\Bigr)\ \longrightarrow\ \frac{d}{\theta},
\qquad n\to\infty,
\]
for each fixed $v\ge 1$. Combining this with \eqref{eq:Phi-beta=1-main-slv} yields
\[
\Phi_n(d_nv)\ \longrightarrow\
\lambda\,\kappa_1^{-1}\,\frac{d}{\theta}\,\frac{d\omega_d}{\theta}\,
v\Bigl(1+\frac{t}{\theta}\Bigr)^{-\theta}.
\]
With $\kappa_1=\lambda\,\dfrac{d\omega_d}{\theta}\,\dfrac{d}{\theta}$ we conclude that
$\Phi_n(d_nv)\to (1+t/\theta)^{-\theta}v=a_1(t)v$.

\smallskip
\noindent\textit{Case $\beta>1$.}
Fix $v\ge 1$ and write $w=d_nv$. Since $\sup_{|y|>t_n}\lambda w/|y|^\alpha\le \lambda d_nv/t_n^\alpha\to 0$,
dominated convergence and \eqref{eq:psi-beta>1-slv} yield, uniformly for $|y|>t_n$,
\[
\EE\Bigl[1-\exp\Bigl\{-\lambda w W/|y|^\alpha\Bigr\}\Bigr]
\sim \frac{\lambda w}{|y|^\alpha}\EE[W]
=\frac{\lambda w}{|y|^\alpha}\,\frac{\beta}{\beta-1}.
\]
Consequently,
\[
\Phi_n(d_nv)\sim \lambda d_n v\,\frac{\beta}{\beta-1}\int_{|y|>t_n}|y|^{-\alpha}\,\mathrm{d}y
=\lambda d_n v\,\frac{\beta}{\beta-1}\,\frac{d\omega_d}{\theta}\,t_n^{-\theta}.
\]
With $t_n^{-\theta}=c_n^{-1}(1+t/\theta)^{-\theta}$ and $d_n=\kappa_\beta^{-1}c_n$ we obtain
\[
\Phi_n(d_nv)\longrightarrow
\lambda\,\frac{\beta}{\beta-1}\,\frac{d\omega_d}{\theta}\,\kappa_\beta^{-1}\,
\Bigl(1+\frac{t}{\theta}\Bigr)^{-\theta}v.
\]
With $\kappa_\beta=\lambda\,\dfrac{d\omega_d}{\theta}\,\dfrac{\beta^2}{(\beta-1)^2}$ this limit equals
$\frac{\beta-1}{\beta}(1+t/\theta)^{-\theta}v=a_\beta(t)v$.

\medskip
\emph{Step 4: pointwise limit of the full integrand.}
For $x\ge 0$ we have $x\int_0^1 e^{-sx}\,\mathrm{d}s=1-e^{-x}$. Hence, for fixed $v\ge 1$,
\[
\Phi_n(d_nv)\Bigl(\int_{0}^{1}e^{-s\Phi_n(d_nv)}\,\mathrm{d}s\Bigr)
\longrightarrow 1-e^{-a_\beta(t)v^\gamma}.
\]

\medskip
\emph{Step 5: domination and convergence of $I_n$.}
Using $1-e^{-x}\le \min\{x,1\}$ and the bounds \eqref{eq:psi-beta<1-bound-slv},
\eqref{eq:psi-beta=1-bound-slv}, \eqref{eq:psi-beta>1-bound-slv} together with \eqref{eq:arg-small-slv},
we obtain for all $n$ large and all $v\ge 1$,
\[
0\le \Phi_n(d_nv)\le C v^\gamma
\]
with $C<\infty$ depending only on $(d,\alpha,\beta,\lambda,t)$. Therefore the integrand in
\eqref{eq:In-rescaled-slv} is dominated by $\min\{Cv^\gamma,1\}v^{-\beta-1}$, which is integrable on $[1,\infty)$.
Dominated convergence applied to \eqref{eq:In-rescaled-slv} yields $I_n\to G_\beta(t)$.

\medskip
\emph{Step 6: asymptotics of $G_\beta(t)$ as $t\to\infty$.}
As $t\to\infty$, $a_\beta(t)\downarrow 0$. For $\beta>1$, dominated convergence gives
\[
G_\beta(t)=\beta\int_1^\infty \bigl(1-e^{-a_\beta(t)v}\bigr)v^{-\beta-1}\,\mathrm{d}v
\sim \beta a_\beta(t)\int_1^\infty v^{-\beta}\,\mathrm{d}v
=\frac{\beta}{\beta-1}\,a_\beta(t).
\]
For $0<\beta<1$, substitute $u=a_\beta(t)^{1/\beta}v$ to obtain
\[
G_\beta(t)
=\beta a_\beta(t)\int_{a_\beta(t)}^\infty \bigl(1-e^{-u}\bigr)u^{-\beta-1}\,\mathrm{d}u
\longrightarrow \beta a_\beta(t)\int_{0}^\infty \bigl(1-e^{-u}\bigr)u^{-\beta-1}\,\mathrm{d}u
=\Gamma(1-\beta)\,a_\beta(t),
\]
where $\int_{0}^\infty (1-e^{-u})u^{-\beta-1}\,\mathrm{d}u=\Gamma(1-\beta)/\beta$.
Finally, for $\beta=1$, writing $A=a_1(t)$ and splitting at $v=1/A$,
\[
G_1(t)=\int_1^\infty \bigl(1-e^{-Av}\bigr)v^{-2}\,\mathrm{d}v
=\int_1^{1/A}\bigl(1-e^{-Av}\bigr)v^{-2}\,\mathrm{d}v+\int_{1/A}^\infty \bigl(1-e^{-Av}\bigr)v^{-2}\,\mathrm{d}v,
\]
and $1-e^{-Av}\sim Av$ on $[1,1/A]$ while $1-e^{-Av}\le 1$ on $[1/A,\infty)$ yields
\[
G_1(t)\sim A\int_1^{1/A}\frac{\mathrm{d}v}{v}=A\log(1/A),
\qquad A\downarrow 0.
\]
Inserting the definition of $a_\beta(t)$ completes the proof.
\end{proof}

%

%\subsubsection{$\mathbb{P}_0\left(e^*_0>t_n|W_0>d_n\right)$, $W_0\in RV_{-\beta}$}
%\begin{lemma}%\label{le:POTeOWo}
%For some increasing sequence $(c_n)$, let $t_n=c_n^{\frac{1}{\alpha -d}}(1+\frac{t}{\alpha-d})$, $t>0$, and $d_n=\kappa^{-1}c_n$ with $\kappa=\lambda\beta d\omega_d(\beta-1)^{-2}(\alpha-d)^{-1}$. Then, 
%\[\mathbb{P}_0\left((\alpha -d)\frac{ e^*_0 - c_n^{\frac{1}{\alpha -d}}}{c_n^{\frac{1}{\alpha -d}}}>t\,\Big\vert \, W_0>\kappa^{-1}c_n\right) = \mathbb{P}_0\left(e^*_0>t_n\vert W_0>d_n\right)\rightarrow  \left(1+\frac{t}{\alpha-d}\right)^{d-\alpha}.\]
%\end{lemma}

It remains to consider the probability

$$\mathbb{P}_0\left(e^*_n>t_n|W_0>d_n\right).$$
Define, for $x \in V \cap B_n$,
    \[
e_x^* \,:=\, \sup\bigl\{|y|:\ y\in V\setminus\{x\},\ x\leftrightarrow y\bigr\}, \quad \bar{e}_n(x) := \max_{ y\in V\setminus\{x\}} e_y^*, 
\]
We have: 
\begin{align*}
\mathbb{P}_0\left(e^*_n>t_n\vert W_0>d_n\right)&=\mathbb{P}_0\left(e^*_n>t_n, e^*_0<t_n\vert W_0>d_n\right)+\mathbb{P}_0\left(e^*_n>t_n, e^*_0>t_n\vert W_0>d_n\right)\\
&=\mathbb{P}_0\left(\overline{e}^*_n(0)>t_n,e^*_0<t_n\vert W_0>d_n\right)+\mathbb{P}_0\left(e^*_0>t_n\vert W_0>d_n\right).
\end{align*}
The second sumand in the r.h.s.$\,$ of the last expression converges to the desired quantity by Lemma \ref{lem:In-limit-allbeta-gpd}. Let us justify that the first sumand vanishes under our assumptions. It suffices to write that: 
\begin{align*}
\mathbb{P}_0\left(\overline{e}^*_n(0)>t_n,e^*_0<t_n\vert W_0>d_n\right)&\leq\mathbb{P}_0\left(\overline{e}^*_n(0)>t_n\vert W_0>d_n\right)\\
&=\mathbb{P}_0\left(\overline{e}^*_n(0)>t_n\right) \underset{n\to\infty}{\longrightarrow} 0
\end{align*}
by Theorem \ref{thm:uncond-max}, since $t_n/\max(n,n^\frac{d}{\theta} (\log n)^{\frac{2}{\theta}})\rightarrow \infty$ by assumption, and the fact that the edge probability is decreasing in $\alpha$ and $\beta$.

\subsection{Proof of Lemma \ref{lem:rooted-exceed-asymptotics}}

%\begin{lemma}\label{le:POTeOWstar}
%Assume that $t_n\geq Kn$ for some $K$ not too small and $d_n=o\left(t_n^{\alpha}\right)\nearrow\infty$. Then, 
%\[\mathbb{P}_0\left(e^*_0>t_n\vert W^*_n>d_n\right)\sim\kappa t_n^{d-\alpha},\]
%where $\kappa=\lambda\beta d\omega_d(\beta-1)^{-2}(\alpha-d)^{-1}$.
%\end{lemma}
First let us rewrite
\begin{align*}
\mathbb{P}_0\left(e^*_0>t_n\vert W^*_n>d_n\right)&=\frac{\mathbb{P}_0\left(e^*_0>t_n, W^*_n>d_n\right)}{\mathbb{P}_0\left(W^*_n>d_n\right)}\\
&=\frac{\mathbb{P}_0\left(e^*_0>t_n, W^*_n>d_n\right)}{1-\left(1-d_n^{-\beta}\right)\exp\left(-(2n))^dd_n^{-\beta}\right)} ,
\end{align*}
but
\begin{align*}
\mathbb{P}_0&\left(e^*_0>t_n, W^*_n>d_n\right)\\
&=\int_1^{d_n}\mathbb{P}_0\left(e^*_0>t_n, \overline{W}^*_n>d_n\vert W_0=w\right)\d F_{W_0}(w)+\int_{d_n}^\infty\mathbb{P}_0\left(e^*_0>t_n\vert W_0=w\right)\d F_{W_0}(w) \\
& = \int_1^{d_n}\mathbb{P}_0\left(e^*_0>t_n, \overline{W}^*_n>d_n\vert W_0=w\right)\d F_{W_0}(w)+\mathbb{P}_0\left(e^*_0>t_n\vert W_0>d_n\right),
\end{align*}
where
$$ \overline{W}^*_n:= \max_{x \in V \cap B_n \setminus \{ 0 \} } W_x, $$
and that can be rewritten due to independence, if $t_n\geq Kn$, as $I_1+I_2$ with : 
\[I_1=\mathbb{P}_0\left(\overline{W}^*_n>d_n\right)\int_1^{d_n}\mathbb{P}_0\left(e^*_0>t_n\vert W_0=w\right)\d F_{W_0}(w)\]
and 
\[I_2= \mathbb{P}_0\left(e^*_0>t_n\vert W_0>d_n\right) = \int_{d_n}^\infty 1 - \exp\left(-\mathbb{E}_0\left(\int_{B_{t_n}^c(0)}\left(1-\exp\left(-\frac{\lambda W_yw}{\vert y\vert^\alpha}\right)\right)\, \d y\right)\right) \, \d F_{W_0}(w) .\]

First we consider the integral in the definition of $I_1$. The proof of the following lemma is carried out essentially as in the previous steps.

\begin{lemma}\label{lem:lowweight-int-asymp}
Let $t_n\to\infty$ and $d_n\to\infty$ satisfy $t_n\ge Kn$ for some $K>0$ and all large $n$, and assume
\begin{equation*}
\frac{d_n}{t_n^\alpha}\longrightarrow 0.
\end{equation*}
Define
\[
A_n:=\int_{1}^{d_n}\PP_0\!\left(e_0^*>t_n\,\big|\,W_0=w\right)\,\mathrm{d}F_W(w)
=\beta\int_{1}^{d_n}\bigl(1-e^{-\Phi_n(w)}\bigr)\,w^{-\beta-1}\,\mathrm{d}w,
\]
with
\[
\Phi_n(w):=\EE\Bigg[\int_{|y|>t_n}\Bigl(1-\exp\Bigl\{-\lambda w W/|y|^\alpha\Bigr\}\Bigr)\,\mathrm{d}y\Bigg].
\]
Then
\[
A_n\sim \beta\int_{1}^{d_n}\Phi_n(w)\,w^{-\beta-1}\,\mathrm{d}w.
\]
Moreover, if
\[
d_n=
\begin{cases}
o\!\left(t_n^{\alpha\beta}\right), & 0<\beta<1,\\[1mm]
o\!\left(t_n^\alpha/\log t_n\right), & \beta=1,\\[1mm]
o\!\left(t_n^\alpha\right), & \beta>1,
\end{cases}
\]
then, as $n\to\infty$,
\begin{equation*}\label{eq:An-asymp-cases}
A_n\ \sim\
\begin{cases}
\kappa_{<}\,t_n^{d-\alpha\beta}\,\log d_n, & 0<\beta<1,\\[1mm]
\kappa_{=}\,t_n^{d-\alpha}\Bigl(\log(t_n^\alpha)\,\log d_n-\tfrac12(\log d_n)^2\Bigr), & \beta=1,\\[1mm]
\kappa_{>}\,t_n^{d-\alpha}\bigl(1-d_n^{1-\beta}\bigr), & \beta>1,
\end{cases}
\end{equation*}
where
\[
\kappa_{<}:=\beta\,\Gamma(1-\beta)\lambda^\beta\,\frac{d\omega_d}{\alpha\beta-d},
\qquad
\kappa_{=}:=\lambda\,\frac{d\omega_d}{\alpha-d},
\qquad
\kappa_{>}:=\lambda\,\frac{d\omega_d}{\alpha-d}\,\frac{\beta^2}{\beta-1}.
\]
\end{lemma}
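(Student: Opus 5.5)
The plan has two parts. First I justify replacing $1-e^{-\Phi_n(w)}$ by $\Phi_n(w)$ inside $A_n$. Then I compute $\beta\int_1^{d_n}\Phi_n(w)\,w^{-\beta-1}\,\mathrm{d}w$ from the small-argument behaviour of $\psi_\beta(x)=\EE[1-e^{-xW}]$ recorded in Step 2 of the proof of Lemma~\ref{lem:In-limit-allbeta-gpd}. By Tonelli, $\Phi_n(w)=\int_{|y|>t_n}\psi_\beta(\lambda w/|y|^\alpha)\,\mathrm{d}y$. For $1\le w\le d_n$ the argument satisfies $\lambda w/|y|^\alpha\le\lambda d_n/t_n^\alpha\to0$ uniformly, by the hypothesis $d_n/t_n^\alpha\to0$. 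So \eqref{eq:psi-beta<1-slv}, \eqref{eq:psi-beta=1-slv} and \eqref{eq:psi-beta>1-slv} hold uniformly in $|y|>t_n$ and $w\le d_n$. Inserting them, dominated by the bounds \eqref{eq:psi-beta<1-bound-slv}--\eqref{eq:psi-beta>1-bound-slv}, and evaluating $\int_{|y|>t_n}|y|^{-\alpha\gamma}\,\mathrm{d}y$ in polar coordinates as before gives, uniformly in $1\le w\le d_n$,
\[
\Phi_n(w)\sim\Gamma(1-\beta)\lambda^\beta\tfrac{d\omega_d}{\alpha\beta-d}\,w^\beta t_n^{d-\alpha\beta}\quad(\beta<1),
\qquad
\Phi_n(w)\sim\lambda\tfrac{\beta}{\beta-1}\tfrac{d\omega_d}{\alpha-d}\,w\, t_n^{d-\alpha}\quad(\beta>1),
\]
and for $\beta=1$
\[
\Phi_n(w)=\lambda\tfrac{d\omega_d}{\alpha-d}\,w\,t_n^{d-\alpha}\bigl(\log(t_n^\alpha/w)+O(1)\bigr).
\]
For $\beta=1$ I split the logarithm exactly as in the proof of Lemma~\ref{lem:In-limit-allbeta-gpd}. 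The relative error of $\psi_1(x)\approx x\log(1/x)$ is $O(1/\log(1/x))$, which is uniformly small here.

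The linearization step is where I expect the main difficulty. The hypotheses do not force $\sup_{w\le d_n}\Phi_n(w)\to0$. For instance, when $\beta>1$, $\Phi_n(d_n)\asymp d_n t_n^{d-\alpha}$ need not be small. I will therefore fix $\varepsilon>0$ and split the $w$-integral at $w_\varepsilon:=\inf\{w:\Phi_n(w)\ge\varepsilon\}$, truncated at $d_n$.
\begin{itemize}
\item On $[1,w_\varepsilon]$, the bound $x-x^2/2\le 1-e^{-x}\le x$ gives agreement with $\beta\int\Phi_n w^{-\beta-1}$ up to a factor $1+O(\varepsilon)$.
\item On $[w_\varepsilon,d_n]$, both integrands are bounded. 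The first is at most $\beta\int_{w_\varepsilon}^\infty w^{-\beta-1}\,\mathrm{d}w=w_\varepsilon^{-\beta}$. For the second I use the explicit form of $\Phi_n$ above.
\end{itemize}
By the asymptotics above, $w_\varepsilon\asymp t_n^{(\alpha\beta-d)/\beta}$ for $\beta<1$ and $w_\varepsilon\asymp t_n^{\alpha-d}$ (up to a log for $\beta=1$) for $\beta\ge1$. Hence the tail contribution is $O(t_n^{d-\alpha\beta})$, respectively $O(t_n^{\beta(d-\alpha)})$. The main term is of order $t_n^{d-\alpha\beta}\log d_n$ when $\beta<1$, of order $t_n^{d-\alpha}\log t_n^\alpha\log d_n$ when $\beta=1$, and of order $t_n^{d-\alpha}$ when $\beta>1$. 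The tail is therefore negligible: when $\beta\le1$ because $\log d_n\to\infty$, and when $\beta>1$ because $\beta(d-\alpha)<d-\alpha$. The same estimate shows that the part of $\beta\int\Phi_n w^{-\beta-1}$ beyond $w_\varepsilon$ is negligible. Letting $\varepsilon\downarrow0$ yields $A_n\sim\beta\int_1^{d_n}\Phi_n(w)w^{-\beta-1}\,\mathrm{d}w$. If $w_\varepsilon\ge d_n$ there is nothing to do.

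It remains to integrate the uniform asymptotics against $\beta w^{-\beta-1}$ over $[1,d_n]$.
\begin{itemize}
\item For $\beta<1$: $\beta\int_1^{d_n}w^{-1}\,\mathrm{d}w=\beta\log d_n$, giving $\kappa_<t_n^{d-\alpha\beta}\log d_n$.
\item For $\beta>1$: $\beta\int_1^{d_n}w^{-\beta}\,\mathrm{d}w=\frac{\beta}{\beta-1}(1-d_n^{1-\beta})$, giving the stated form with the constant produced by this computation.
\item For $\beta=1$: $\int_1^{d_n}\bigl(\log t_n^\alpha-\log w\bigr)\,\frac{\mathrm{d}w}{w}=\log(t_n^\alpha)\log d_n-\tfrac12\log^2 d_n$. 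This is at least $\tfrac12\log(t_n^\alpha)\log d_n$ because $d_n\le t_n^\alpha$. The $O(1)$ error integrates to $O(\log d_n)$, which is of smaller order.
\end{itemize}
The stronger growth conditions on $d_n$ in the ``moreover'' part enter only to keep the uniform-smallness and negligibility estimates valid with room to spare.
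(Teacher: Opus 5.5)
\paragraph{The failing step.} The step that fails is your claim that ``the same estimate shows that the part of $\beta\int\Phi_n w^{-\beta-1}$ beyond $w_\varepsilon$ is negligible.'' For $\beta>1$ this is fine, since that part is $O(t_n^{d-\alpha}w_\varepsilon^{1-\beta})=O(t_n^{\beta(d-\alpha)})$. For $\beta\le 1$ it is false.

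Take $\beta<1$ and put $\theta=\alpha\beta-d$. Your own uniform asymptotics give $\Phi_n(w)\sim c\,w^\beta t_n^{-\theta}$ on $[1,d_n]$, with $c=\Gamma(1-\beta)\lambda^\beta d\omega_d/\theta$. Hence $\Phi_n(w)w^{-\beta-1}\sim c\,t_n^{-\theta}w^{-1}$, and so $\beta\int_{w_\varepsilon}^{d_n}\Phi_n(w)w^{-\beta-1}\,\mathrm{d}w\sim\beta c\,t_n^{-\theta}\log(d_n/w_\varepsilon)$. This is of the same order as the main term $\beta c\,t_n^{-\theta}\log d_n$ as soon as $d_n\ge t_n^{\theta/\beta+\delta}$ for some $\delta>0$.

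Meanwhile your (correct) bound $w_\varepsilon^{-\beta}=O(t_n^{-\theta})$ on the $A_n$-tail shows that $A_n$ saturates: in that regime $A_n\sim c\,\theta\,t_n^{-\theta}\log t_n$. Consequently, for $d_n=t_n^{a}$ with $\theta/\beta<a<\alpha$, the ratio of $A_n$ to $\beta\int_1^{d_n}\Phi_n(w)w^{-\beta-1}\,\mathrm{d}w$ tends to $\theta/(\beta a)<1$. Both the equivalence and the formula $\kappa_<t_n^{d-\alpha\beta}\log d_n$ then fail. This regime is allowed by $d_n/t_n^\alpha\to0$, and even by $d_n=o(t_n^{\alpha\beta})$ whenever $\alpha\beta(1-\beta)<d$; for example $d=1$, $\beta=\tfrac12$, $\alpha=3$, $a=\tfrac54$ gives limiting ratio $\tfrac45$.

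The same happens for $\beta=1$ when $t_n^{\alpha-d+\delta}\le d_n=o(t_n^\alpha/\log t_n)$. Then $A_n$ is driven by $\log(t_n^\alpha)\log w_\varepsilon-\tfrac12\log^2 w_\varepsilon$, not by the same expression in $d_n$. The two differ by a factor bounded away from $1$, because $x\mapsto\alpha x-x^2/2$ is increasing on $[0,\alpha]$.

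So no sharper estimate can close this gap. Your closing remark that the ``moreover'' growth conditions only give ``room to spare'' is also not right: they do not control this effect.

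\paragraph{What is missing.} You need a hypothesis that keeps $d_n$ from going polynomially beyond the scale $w_\varepsilon$ where $\Phi_n$ becomes of order one. One such condition is $\Phi_n(d_n)\to0$, that is, $d_n=o\bigl(t_n^{(\alpha\beta-d)/\beta}\bigr)$ for $\beta<1$ and $d_n\log(t_n^\alpha/d_n)=o(t_n^{\alpha-d})$ for $\beta=1$. For $\beta<1$ the weaker condition $d_n\le t_n^{\theta/\beta+o(1)}$ already suffices for the equivalence.

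Under $\Phi_n(d_n)\to0$, monotonicity of $\Phi_n$ gives $1-e^{-\Phi_n(w)}\sim\Phi_n(w)$ uniformly on $[1,d_n]$. Your $\varepsilon$-splitting is then needed only for $\beta>1$, and the rest of your computation goes through.

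This condition holds for the hub levels of Theorem~\ref{thm:POT-hub}. There $d_n^\beta\asymp t_n^{\alpha\beta-d}/\log t_n$ for $\beta<1$ and $d_n\asymp t_n^{\alpha-d}/\log^2 t_n$ for $\beta=1$, so $\Phi_n(d_n)\asymp 1/\log t_n$. The paper's proof (``essentially as in the previous steps'') tacitly relies on this smallness and does not handle the general case either.

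\paragraph{The constant for $\beta>1$.} Your computation gives the constant $\lambda\frac{d\omega_d}{\alpha-d}\frac{\beta^2}{(\beta-1)^2}$. This is consistent with \eqref{eq:kappa-values}, but not with the printed $\kappa_>=\lambda\frac{d\omega_d}{\alpha-d}\frac{\beta^2}{\beta-1}$. State this discrepancy explicitly rather than writing ``the constant produced by this computation''.
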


Then, we collect terms and conclude, using Lemma \ref{le:TailW*n}, Lemma \ref{lem:In-limit-allbeta-gpd}:

\begin{align*}
\mathbb{P}_0\left(e^*_0>t_n\vert W^*_n>d_n\right)
&=\frac{\mathbb{P}_0\left(e^*_0>t_n, W^*_n>d_n\right)}{1-\left(1-d_n^{-\beta}\right)\exp\left(-(2n))^dd_n^{-\beta}\right)}\\
& = \frac{\mathbb{P}_0\left(\overline{W}^*_n>d_n\right)A_n+\mathbb{P}_0\left(e^*_0>t_n\vert W_0>d_n\right)}{1-\left(1-d_n^{-\beta}\right)\exp\left(-(2n))^dd_n^{-\beta}\right)}\\
& = \frac{\mathbb{P}_0\left(\overline{W}^*_n>d_n\right)A_n+O(d_n^{-\beta})}{1-\left(1-d_n^{-\beta}\right)\exp\left(-(2n))^dd_n^{-\beta}\right)}.
\end{align*}
Now, the proof of Lemma \ref{lem:rooted-exceed-asymptotics} follows from combining Lemma \ref{lem:lowweight-int-asymp}, Lemma \ref{le:TailW*n} and the definition of the sequences $t_n$ and $d_n$.

\subsection{Proof of Lemma \ref{le:jointedgesweights}}

    We will apply \cite[Theorem 3.1]{Penrose2} to prove the claim. \newline
    \emph{Step 1.} For $x,y \in \mathbb{R}$ we define the two coupled variables $U_{x,y}$ and $V_{x,y}$ by
    \begin{align*}
        U_{x,y} & = \sum_{z_1\in \mathcal{P} \cap B_n} \sum_{z_2\in \mathcal{P}\cap B^C_{t_n} (z_1)  } \mathbf{1}_{\{ {W_{z_1} > d_n}  \}}\mathbf{1}_{\{ {z_1\leftrightarrow z_2} \}} ,\\
        V_{x,y} & = \sum_{z_1\in (\mathcal{P}\cup \{x,y \}) \cap B_n} \sum_{z_2\in (\mathcal{P}\cup \{x,y \})\cap B^C_{t_n} (z_1)  } \mathbf{1}_{\{ {W_{z_1} > d_n}  \}}\mathbf{1}_{\{ {z_1\leftrightarrow z_2} \}} - \mathbf{1}_{\{ {W_{x} > d_n}  \}}\mathbf{1}_{\{ {x\leftrightarrow y} \}} ,
    \end{align*}
    that is, $V_{x,y}$ is the number of exceedances in $B_n$, other than the one at $x,y$ (if there is one), in the graph when adding two vertices $x,y$, and $U_{x,y}$ is the number of exceedances in the induced graph.
    \newline

\emph{Step 2.} For $x,y\in\RR^d$ define
\[
p(x,y)
:=\EE\!\left[\mathbf{1}_{\{W_x>d_n\}}\mathbf{1}_{\{x\leftrightarrow y\}}\right]
\mathbf{1}_{\{x\in B_n\}}\mathbf{1}_{\{y\in B^c_{t_n}(x)\}} .
\]
As before, conditioning on $W_x$ and using independence of the weights, we obtain the exact identity
\begin{align}
p(x,y)
&=\mathbf{1}_{\{x\in B_n\}}\mathbf{1}_{\{|x-y|>t_n\}}
\int_{d_n}^{\infty}
\EE\!\left[1-\exp\!\left\{-\frac{\lambda w W_y}{|x-y|^\alpha}\right\}\right]\,
\mathrm{d}F_W(w) \notag\\
&=\mathbf{1}_{\{x\in B_n\}}\mathbf{1}_{\{|x-y|>t_n\}}
\int_{d_n}^{\infty}\psi_\beta\!\left(\frac{\lambda w}{|x-y|^\alpha}\right)\,
\mathrm{d}F_W(w),\label{eq:pxy-psi}
\end{align}
where
\[
\psi_\beta(u):=\EE\!\left[1-e^{-uW}\right],\qquad u\ge 0.
\]
We next bound \eqref{eq:pxy-psi} using the regime--dependent estimates for $\psi_\beta$ from
Lemma~\ref{lem:In-limit-allbeta-gpd} (Step~2 therein). Write $r:=|x-y|$ and note that on
$\{|x-y|>t_n\}$ we have $r\ge t_n$.

\smallskip\noindent
\emph{Case $0<\beta<1$.}
Using $\psi_\beta(u)\le C u^\beta$ for $u\in(0,1]$ and $\psi_\beta(u)\le 1$ for all $u\ge 0$, we split the
$w$--integral at $r^\alpha/\lambda$ to obtain
\begin{align*}
\int_{d_n}^{\infty}\psi_\beta\!\left(\frac{\lambda w}{r^\alpha}\right)\,\mathrm{d}F_W(w)
&\le C\int_{d_n}^{r^\alpha/\lambda}\Bigl(\frac{\lambda w}{r^\alpha}\Bigr)^\beta\,\beta w^{-\beta-1}\,\mathrm{d}w
+\int_{r^\alpha/\lambda}^{\infty}\beta w^{-\beta-1}\,\mathrm{d}w\\
&\le C r^{-\alpha\beta}\int_{d_n}^{r^\alpha/\lambda}\frac{\mathrm{d}w}{w}
+ C r^{-\alpha\beta}
\ \le\ C r^{-\alpha\beta}\Bigl(1+\log\!\frac{r^\alpha}{d_n}\Bigr).
\end{align*}
Consequently,
\begin{equation}\label{eq:pxy-bound-beta<1}
p(x,y)\ \le\ C\,\mathbf{1}_{\{x\in B_n\}}\mathbf{1}_{\{|x-y|>t_n\}}\,
|x-y|^{-\alpha\beta}\Bigl(1+\log\!\frac{|x-y|^\alpha}{d_n}\Bigr).
\end{equation}

\smallskip\noindent
\emph{Case $\beta=1$.}
Using $\psi_1(u)\le C u\bigl(1+\log(1/u)\bigr)$ for $u\in(0,1]$ and again splitting at $r^\alpha/\lambda$ yields
\begin{align*}
\int_{d_n}^{\infty}\psi_1\!\left(\frac{\lambda w}{r^\alpha}\right)\,\mathrm{d}F_W(w)
&\le C\int_{d_n}^{r^\alpha/\lambda}\frac{\lambda w}{r^\alpha}\Bigl(1+\log\!\frac{r^\alpha}{\lambda w}\Bigr)\,w^{-2}\,\mathrm{d}w
+\int_{r^\alpha/\lambda}^{\infty}w^{-2}\,\mathrm{d}w\\
&= C r^{-\alpha}\int_{d_n}^{r^\alpha/\lambda}\frac{1+\log\!\bigl(r^\alpha/(\lambda w)\bigr)}{w}\,\mathrm{d}w
+ C r^{-\alpha}\\
&\le C r^{-\alpha}\Bigl(1+\log^2\!\frac{r^\alpha}{d_n}\Bigr).
\end{align*}
Hence
\begin{equation}\label{eq:pxy-bound-beta=1}
p(x,y)\ \le\ C\,\mathbf{1}_{\{x\in B_n\}}\mathbf{1}_{\{|x-y|>t_n\}}\,
|x-y|^{-\alpha}\Bigl(1+\log^2\!\frac{|x-y|^\alpha}{d_n}\Bigr).
\end{equation}

\smallskip\noindent
\emph{Case $\beta>1$.}
Since $\EE[W]<\infty$ and $1-e^{-u}\le u$, we have $\psi_\beta(u)\le u\,\EE[W]$ for all $u\ge 0$.
Therefore,
\begin{align*}
\int_{d_n}^{\infty}\psi_\beta\!\left(\frac{\lambda w}{r^\alpha}\right)\,\mathrm{d}F_W(w)
&\le \frac{\lambda\,\EE[W]}{r^\alpha}\int_{d_n}^{\infty} w\,\beta w^{-\beta-1}\,\mathrm{d}w
\ =\ C\,r^{-\alpha}\,d_n^{1-\beta},
\end{align*}
and thus
\begin{equation}\label{eq:pxy-bound-beta>1}
p(x,y)\ \le\ C\,\mathbf{1}_{\{x\in B_n\}}\mathbf{1}_{\{|x-y|>t_n\}}\,
|x-y|^{-\alpha}\,d_n^{1-\beta}.
\end{equation}

\smallskip
In particular, the bounds \eqref{eq:pxy-bound-beta<1}--\eqref{eq:pxy-bound-beta>1} may be summarized as
\[
p(x,y)\ \le\ C\,\mathbf{1}_{\{x\in B_n\}}\mathbf{1}_{\{|x-y|>t_n\}}\times
\begin{cases}
|x-y|^{-\alpha\beta}\Bigl(1+\log\!\dfrac{|x-y|^\alpha}{d_n}\Bigr), & 0<\beta<1,\\[2mm]
|x-y|^{-\alpha}\Bigl(1+\log^2\!\dfrac{|x-y|^\alpha}{d_n}\Bigr), & \beta=1,\\[2mm]
|x-y|^{-\alpha}\,d_n^{1-\beta}, & \beta>1.
\end{cases}
\]

    \emph{Step 3.} For $x,y \in \mathbb{R}$ we estimate $\mathbb{E}_{x,y}[ |U_{x,y} - V_{x,y}|]$. By definition we have
    \begin{align*}
        & |U_{x,y} - V_{x,y}|  = V_{x,y} - U_{x,y} \\
        & = \sum_{z\in \mathcal{P}\cap B^C_{t_n} (x)  } \mathbf{1}_{\{ {W_{x} > d_n}  \}}\mathbf{1}_{\{ {x\leftrightarrow z} \} } \mathbf{1}_{\{ x \in B_n \}} + \sum_{z\in \mathcal{P}\cap B^C_{t_n} (y)  } \mathbf{1}_{\{ {W_{y} > d_n}  \}}\mathbf{1}_{\{ {y\leftrightarrow z} \} } \mathbf{1}_{\{ y \in B_n \}} \\
        & \quad + \sum_{z\in \mathcal{P}\cap B_n (x)  } \mathbf{1}_{\{ {W_{z} > d_n}  \}}\mathbf{1}_{\{ {x\leftrightarrow z} \} } \mathbf{1}_{\{ x \in B^C_{t_n} (z) \}} + \sum_{z\in \mathcal{P}\cap B_n (y)  } \mathbf{1}_{\{ {W_{z} > d_n}  \}}\mathbf{1}_{\{ {y\leftrightarrow z} \} } \mathbf{1}_{\{ y \in B^C_{t_n} (z) \}} .
    \end{align*}
    Consequently, we have
    \begin{align*}
        \mathbb{E}_{x,y}[ |U_{x,y} - V_{x,y}|] & \leq \textnormal{I} + \textnormal{II} + \textnormal{III} + \textnormal{IV}
    \end{align*}
    with
     \begin{align*}
        \textnormal{I} & = \mathbf{1}_{\{ x \in B_n  \}}\int_{B^C_{t_n} (x) }\mathbb{P}_{x,y,z} \left( W_x > d_n, x\leftrightarrow z\right) \d z ,\\
        \textnormal{II} &= \mathbf{1}_{\{ y \in B_n  \}}\int_{B^C_{t_n} (y) }\mathbb{P}_{x,y,z} \left( W_y > d_n, y\leftrightarrow z\right) \d z ,\\
        \textnormal{III} & = \int_{B_n }\mathbb{P}_{x,y,z} \left( W_z > d_n, x\leftrightarrow z , |x-z|>t_n\right) \d z ,\\
        \textnormal{IV} &= \int_{B_n }\mathbb{P}_{x,y,z} \left( W_z > d_n, y\leftrightarrow z , |y-z|>t_n\right) \d z .
    \end{align*}
    Thus, making use of previous calculations we obtain
    $$ \mathbb{E}_{x,y}[ |U_{x,y} - V_{x,y}|] \leq w(x,y) $$
    with
  \[
w(x,y)
:=\int_{\RR^d}
\mathbf{1}_{\{x\in B_n\}}\mathbf{1}_{\{|z|>t_n\}}\,
q_\beta(|z|;d_n)\,\mathrm{d}z,
\]
where, for $r>0$,
\[
q_\beta(r;d_n)=
\begin{cases}
C\,r^{-\alpha\beta}\Bigl(1+\log\!\bigl(\frac{r^\alpha}{d_n}\bigr)\Bigr),
& 0<\beta<1,\\[1mm]
C\,r^{-\alpha}\Bigl(1+\log^2\!\bigl(\frac{r^\alpha}{d_n}\bigr)\Bigr),
& \beta=1,\\[1mm]
C\,d_n^{\,1-\beta}\,r^{-\alpha},
& \beta>1,
\end{cases}
\]
for a constant $C\in(0,\infty)$ independent of $n,x,y$.

\smallskip
This yields the regime-wise bounds
\[
w(x,y)\ \le\
\begin{cases}
C\displaystyle\int_{|z|>t_n}|z|^{-\alpha\beta}\Bigl(1+\log\!\bigl(\frac{|z|^\alpha}{d_n}\bigr)\Bigr)\,\mathrm{d}z,
& 0<\beta<1,\\[3mm]
C\displaystyle\int_{|z|>t_n}|z|^{-\alpha}\Bigl(1+\log^2\!\bigl(\frac{|z|^\alpha}{d_n}\bigr)\Bigr)\,\mathrm{d}z,
& \beta=1,\\[3mm]
C\,d_n^{\,1-\beta}\displaystyle\int_{|z|>t_n}|z|^{-\alpha}\,\mathrm{d}z,
& \beta>1.
\end{cases}
\]

\smallskip
Moreover, the corresponding tail integrals satisfy, for all $t_n\ge 1$ and $d_n\ge 1$,
\[
\int_{|z|>t_n}|z|^{-\alpha\beta}\Bigl(1+\log\!\bigl(\frac{|z|^\alpha}{d_n}\bigr)\Bigr)\,\mathrm{d}z
\ \le\
C\,t_n^{d-\alpha\beta}\Bigl(1+\log\!\bigl(\frac{t_n^\alpha}{d_n}\bigr)\Bigr),
\qquad (0<\beta<1,\ \alpha\beta>d),
\]
\[
\int_{|z|>t_n}|z|^{-\alpha}\Bigl(1+\log^2\!\bigl(\frac{|z|^\alpha}{d_n}\bigr)\Bigr)\,\mathrm{d}z
\ \le\
C\,t_n^{d-\alpha}\Bigl(1+\log^2\!\bigl(\frac{t_n^\alpha}{d_n}\bigr)\Bigr),
\qquad (\beta=1,\ \alpha>d),
\]
and
\[
\int_{|z|>t_n}|z|^{-\alpha}\,\mathrm{d}z
=\frac{d\omega_d}{\alpha-d}\,t_n^{d-\alpha},
\qquad (\beta>1,\ \alpha>d).
\]
Consequently,
\[
w(x,y)\ \le\
\begin{cases}
C\,t_n^{d-\alpha\beta}\Bigl(1+\log\!\bigl(\frac{t_n^\alpha}{d_n}\bigr)\Bigr), & 0<\beta<1,\\[1mm]
C\,t_n^{d-\alpha}\Bigl(1+\log^2\!\bigl(\frac{t_n^\alpha}{d_n}\bigr)\Bigr), & \beta=1,\\[1mm]
C\,d_n^{\,1-\beta}\,t_n^{d-\alpha}, & \beta>1.
\end{cases}
\]

    \emph{Step 4.} From \cite[Theorem 3.1]{Penrose2} we get the upper bound on the total variation distance
\begin{equation}\label{eq:tv-bound-start}
d_{\mathrm{TV}}(X_n,Z_n)\ \le\ C \int_{\RR^d}\int_{\RR^d} p(x,y)\,w(x,y)\,\mathrm{d}y\,\mathrm{d}x .
\end{equation}
Combining the estimates from the previous steps, we distinguish the three regimes of $\beta$.

\medskip
\noindent\textit{Case $0<\beta<1$.}
Using the bound (from Step~2)
\[
p(x,y)\ \le\ C\,\mathbf{1}_{\{x\in B_n\}}\mathbf{1}_{\{|x-y|>t_n\}}\,
|x-y|^{-\alpha\beta}\Bigl(1+\log\!\Bigl(\frac{|x-y|^\alpha}{d_n}\Bigr)\Bigr),
\]
and the definition/bound for $w(x,y)$,
\[
w(x,y)\ \le\ C\int_{|z|>t_n}|z|^{-\alpha\beta}\Bigl(1+\log\!\Bigl(\frac{|z|^\alpha}{d_n}\Bigr)\Bigr)\,\mathrm{d}z
\ \le\ C\,t_n^{d-\alpha\beta}\Bigl(1+\log\!\Bigl(\frac{t_n^\alpha}{d_n}\Bigr)\Bigr),
\]
we obtain from \eqref{eq:tv-bound-start}
\begin{align*}
d_{\mathrm{TV}}(X_n,Z_n)
&\le C\,t_n^{d-\alpha\beta}\Bigl(1+\log\!\Bigl(\frac{t_n^\alpha}{d_n}\Bigr)\Bigr)
\int_{B_n}\int_{|x-y|>t_n}|x-y|^{-\alpha\beta}
\Bigl(1+\log\!\Bigl(\frac{|x-y|^\alpha}{d_n}\Bigr)\Bigr)\,\mathrm{d}y\,\mathrm{d}x .
\end{align*}
For the remaining integral we use the same tail estimate again:
\[
\int_{|x-y|>t_n}|x-y|^{-\alpha\beta}
\Bigl(1+\log\!\Bigl(\frac{|x-y|^\alpha}{d_n}\Bigr)\Bigr)\,\mathrm{d}y
\le C\,t_n^{d-\alpha\beta}\Bigl(1+\log\!\Bigl(\frac{t_n^\alpha}{d_n}\Bigr)\Bigr),
\]
uniformly in $x\in\RR^d$. Hence
\begin{equation}\label{eq:tv-beta<1}
d_{\mathrm{TV}}(X_n,Z_n)
\ \le\
C\,(2n)^d\,t_n^{2(d-\alpha\beta)}
\Bigl(1+\log\!\Bigl(\frac{t_n^\alpha}{d_n}\Bigr)\Bigr)^{2}.
\end{equation}

\medskip
\noindent\textit{Case $\beta=1$.}
Proceeding analogously, we use
\[
p(x,y)\ \le\ C\,\mathbf{1}_{\{x\in B_n\}}\mathbf{1}_{\{|x-y|>t_n\}}\,
|x-y|^{-\alpha}\Bigl(1+\log^2\!\Bigl(\frac{|x-y|^\alpha}{d_n}\Bigr)\Bigr),
\]
and
\[
w(x,y)\ \le\ C\int_{|z|>t_n}|z|^{-\alpha}\Bigl(1+\log^2\!\Bigl(\frac{|z|^\alpha}{d_n}\Bigr)\Bigr)\,\mathrm{d}z
\ \le\ C\,t_n^{d-\alpha}\Bigl(1+\log^2\!\Bigl(\frac{t_n^\alpha}{d_n}\Bigr)\Bigr).
\]
Thus, as above,
\begin{equation}\label{eq:tv-beta=1}
d_{\mathrm{TV}}(X_n,Z_n)
\ \le\
C\,(2n)^d\,t_n^{2(d-\alpha)}
\Bigl(1+\log^2\!\Bigl(\frac{t_n^\alpha}{d_n}\Bigr)\Bigr)^{2}.
\end{equation}

\medskip
\noindent\textit{Case $\beta>1$.}
Here, we use
\[
p(x,y)\ \le\ C\,d_n^{1-\beta}\,\mathbf{1}_{\{x\in B_n\}}\mathbf{1}_{\{|x-y|>t_n\}}\,
|x-y|^{-\alpha},
\quad
w(x,y)\ \le\ C\,d_n^{1-\beta}\int_{|z|>t_n}|z|^{-\alpha}\,\mathrm{d}z
\le C\,d_n^{1-\beta}\,t_n^{d-\alpha}.
\]
Hence
\begin{align*}
d_{\mathrm{TV}}(X_n,Z_n)
&\le C\,d_n^{1-\beta}\,t_n^{d-\alpha}\int_{B_n}\int_{|x-y|>t_n} d_n^{1-\beta}|x-y|^{-\alpha}\,\mathrm{d}y\,\mathrm{d}x \\
&\le C\,d_n^{2(1-\beta)}\,t_n^{d-\alpha}\,(2n)^d\int_{|z|>t_n}|z|^{-\alpha}\,\mathrm{d}z \\
&\le C\,(2n)^d\,d_n^{2(1-\beta)}\,t_n^{2(d-\alpha)} .
\end{align*}
That is,
\begin{equation}\label{eq:tv-beta>1}
d_{\mathrm{TV}}(X_n,Z_n)
\ \le\
C\,(2n)^d\,t_n^{2(d-\alpha)}\,d_n^{2(1-\beta)} .
\end{equation}

Collecting \eqref{eq:tv-beta<1}--\eqref{eq:tv-beta>1}, we obtain the unified bound
\begin{equation*}\label{eq:tv-summary}
d_{\mathrm{TV}}(X_n,Z_n)\ \le\ C\,(2n)^d\,
\begin{cases}
t_n^{2(d-\alpha\beta)}
\Bigl(1+\log\!\bigl(t_n^\alpha/d_n\bigr)\Bigr)^{2},
& 0<\beta<1,\\[1mm]
t_n^{2(d-\alpha)}
\Bigl(1+\log^{2}\!\bigl(t_n^\alpha/d_n\bigr)\Bigr)^{2},
& \beta=1,\\[1mm]
t_n^{2(d-\alpha)}\,d_n^{2(1-\beta)},
& \beta>1,
\end{cases}
\end{equation*}
for all sufficiently large $n$, where $C\in(0,\infty)$ is a constant independent of $n$.

    It remains to prove the last claim
   \begin{equation*}%\label{eq:EXn-asymp}
\EE[X_n]\ \sim\  K (2n)^d\cdot
\begin{cases}
t_n^{\,d-\alpha\beta}\,\log\!\Bigl(\dfrac{t_n^\alpha}{d_n}\Bigr), & 0<\beta<1,\\[2.5mm]
t_n^{\,d-\alpha}\,\log^2\!\Bigl(\dfrac{t_n^\alpha}{d_n}\Bigr), & \beta=1,\\[2.5mm]
t_n^{\,d-\alpha}\,d_n^{\,1-\beta}, & \beta>1,
\end{cases}
\qquad n\to\infty,
\end{equation*}
with $K = K_{\alpha,\beta,d,\lambda}$ a constant independent of $n$. We have

\begin{align}
\EE[X_n]
&= \int_{B_n}\int_{B^c_{t_n}(x)} \PP\bigl(W_x>d_n,\ x\leftrightarrow y\bigr)\,\mathrm{d}y\,\mathrm{d}x \notag\\
&= \int_{B_n}\int_{B^c_{t_n}(x)} \int_{d_n}^{\infty}
\Bigl(1-\EE\Bigl[\exp\Bigl\{-\lambda \frac{wW_y}{|x-y|^\alpha}\Bigr\}\Bigr]\Bigr)\,\mathrm{d}F_W(w)\,\mathrm{d}y\,\mathrm{d}x \notag\\
&= (2n)^d\int_{B^c_{t_n}(0)} \int_{d_n}^{\infty}
\Bigl(1-\EE\Bigl[\exp\Bigl\{-\lambda \frac{wW}{|y|^\alpha}\Bigr\}\Bigr]\Bigr)\,\mathrm{d}F_W(w)\,\mathrm{d}y,
\label{eq:EXn-start}
\end{align}
where we used stationarity and wrote $W$ for a generic copy of the weight. Define for $\beta>0$
\[
\psi_\beta(x):=\EE\bigl[1-e^{-xW}\bigr],\qquad x\ge 0.
\]
Then \eqref{eq:EXn-start} can be rewritten as
\begin{equation}\label{eq:EXn-psi}
\EE[X_n]
=(2n)^d\int_{|y|>t_n}\int_{d_n}^{\infty}
\psi_\beta\!\Bigl(\frac{\lambda w}{|y|^\alpha}\Bigr)\,\mathrm{d}F_W(w)\,\mathrm{d}y.
\end{equation}

\smallskip
Recall that
\begin{equation}\label{eq:dn-over-tn}
\frac{d_n}{t_n^\alpha}\longrightarrow 0,
\qquad n\to\infty.
\end{equation}
Since $|y|>t_n$ implies $\lambda w/|y|^\alpha\le \lambda w/t_n^\alpha$, condition \eqref{eq:dn-over-tn}
ensures that the argument of $\psi_\beta$ is uniformly small on a dominant region of
$\{w\ge d_n,\ |y|>t_n\}$, and we may use the standard small--$x$ expansions
\begin{equation}\label{eq:psi-small-x}
\psi_\beta(x)\sim
\begin{cases}
\Gamma(1-\beta)\,x^\beta, & 0<\beta<1,\\[1mm]
x\log(1/x), & \beta=1,\\[1mm]
x\,\EE[W], & \beta>1,
\end{cases}
\qquad x\downarrow 0,
\end{equation}
together with the corresponding bounds $\psi_\beta(x)\le Cx^\beta$ ($0<\beta<1$),
$\psi_1(x)\le Cx(1+\log(1/x))$, and $\psi_\beta(x)\le Cx$ ($\beta>1$), for all $x\in(0,1]$.

\smallskip
Inserting \eqref{eq:psi-small-x} into \eqref{eq:EXn-psi} and evaluating the $y$--integrals in polar coordinates yields
the following regime--dependent asymptotic order:
\begin{equation*}%\label{eq:EXn-asymp}
\EE[X_n]\ \sim\ K (2n)^d\cdot
\begin{cases}
t_n^{\,d-\alpha\beta}\,\log\!\Bigl(\dfrac{t_n^\alpha}{d_n}\Bigr), & 0<\beta<1,\\[2.5mm]
t_n^{\,d-\alpha}\,\log^2\!\Bigl(\dfrac{t_n^\alpha}{d_n}\Bigr), & \beta=1,\\[2.5mm]
t_n^{\,d-\alpha}\,d_n^{\,1-\beta}, & \beta>1,
\end{cases}
\qquad n\to\infty,
\end{equation*}
where the constant $K=K_{\alpha,\beta,d,\lambda}$ is given by
\begin{equation}\label{eq:kappa-values}
K_{\alpha,\beta,d,\lambda} \,=\,
\begin{cases}
\dfrac{ d\omega_{d}\,\beta\,\Gamma(1-\beta)\,\lambda^\beta}{\alpha\beta-d},
& 0<\beta<1,\\[3mm]
\dfrac{ d \omega_{d}\,\lambda}{2(\alpha-d)},
& \beta=1,\\[3mm]
\dfrac{ d\omega_{d}}{\alpha-d}\,\lambda\Bigl(\dfrac{\beta}{\beta-1}\Bigr)^{\!2},
& \beta>1,
\end{cases}
\end{equation}
as claimed.

\subsection{Conclusuion}

%\begin{theorem}\label{th:POT1}
%    For some increasing sequence $(c_n)$, let $t_n=c_n^{\frac{1}{\alpha -d}}(1+\frac{t}{\alpha-d})$ and $d_n=\kappa^{-1}c_n$ with $\kappa=\lambda\beta d\omega_d(\beta-1)^{-2}(\alpha-d)^{-1}$. Assume that $t_n/\max(n,n^\frac{d}{\alpha-d})\rightarrow \infty$. Then, 
%\[\mathbb{P}_0\left(e^*_n>t_n\vert W_n^*>d_n\right)\rightarrow \left(1+\frac{t}{\alpha-d}\right)^{d-\alpha}.\]
%\end{theorem}
Now we complete the proof of Theorem \ref{thm:POT-hub}. The intuition of the proof is based on the observation in Remark \ref{re:intuitionmaxweights}. To make it precise, we write
    \begin{align*}
        & \mathbb{P}_0\left(e^*_n>t_n\vert W_n^*>d_n\right) \\
        & = \frac{\mathbb{P}_0\left(e^*_n>t_n, \exists ! \, x :  W_x>d_n\right)  }{\mathbb{P}_0\left( W_n^*>d_n\right)} +  \frac{\mathbb{P}_0\left(e^*_n>t_n, \exists   \textnormal{ at least two } x,y :  W_x>d_n, W_y>d_n\right)  }{\mathbb{P}_0\left( W_n^*>d_n\right)} .
    \end{align*}
    We will show that the second term in the latter sum is negligible. Indeed, using Lemma \ref{le:TailW*n} and the fact that $d_n^\beta n^{-d} \to \infty$, it is straightforward to see that
    \begin{align*}
        & \frac{\mathbb{P}_0\left(e^*_n>t_n, \exists   \textnormal{ at least two } x,y :  W_x>d_n, W_y>d_n\right)  }{\mathbb{P}_0\left( W_n^*>d_n\right)} \\
        & \leq \frac{\mathbb{P}_0\left( \exists   \textnormal{ at least two } x,y :  W_x>d_n, W_y>d_n\right)  }{\mathbb{P}_0\left( W_n^*>d_n\right)} \leq c \frac{n^{2d} d_n^{-2\beta}}{n^d d_n^{-\beta}} = c n^d d_n^{-\beta} \to 0.
    \end{align*}
    Thus, we focus on the first term and write
    \begin{align*}
        &  \frac{\mathbb{P}_0\left(e^*_n>t_n, \exists ! \, x :  W_x>d_n\right)  }{\mathbb{P}_0\left( W_n^*>d_n\right)} = \frac{\mathbb{P}_0\left(e^*_n>t_n, \exists ! \, x :  W_x>d_n\right)  }{\mathbb{P}_0\left(\exists ! \, x :  W_x>d_n\right)} \frac{\mathbb{P}_0\left( \exists ! \, x :  W_x>d_n\right)  }{\mathbb{P}_0\left( W_n^*>d_n\right)} .
    \end{align*}
    First, we consider the factor 
    $$\frac{\mathbb{P}_0\left( \exists ! \, x :  W_x>d_n\right)  }{\mathbb{P}_0\left( W_n^*>d_n\right)} = \frac{\mathbb{P}_0\left( \sum_{x\in \mathcal{P} \cap B_n}   \mathbf{1}_{\{W_x>d_n\}} = 1\right)  }{\mathbb{P}_0\left( W_n^*>d_n\right)} .$$
    It is easy to see that the random variable $\sum_{x\in \mathcal{P} \cap B_n}   \mathbf{1}_{\{W_x>d_n\}}$ approximates a Poisson distribution. More precisely, we have
    \begin{align*}
        \mathbb{P}_0\left( \sum_{x\in \mathcal{P} \cap B_n}   \mathbf{1}_{\{W_x>d_n\}} = 1\right) \sim (2n)^d d_n^{-\beta} \exp (- (2n)^d d_n^{-\beta}),
    \end{align*}
    so that along with Lemma \ref{le:TailW*n} we have
     \begin{align*}
            \frac{\mathbb{P}_0\left( \exists ! \, x :  W_x>d_n\right)  }{\mathbb{P}_0\left( W_n^*>d_n\right)} &\sim \frac{(2n)^d d_n^{-\beta} \exp (- (2n)^d d_n^{-\beta})}{1-\left(1-d_n^{-\beta}\right)\exp\left(-(2n)^dd_n^{-\beta}\right)} \\
            & \sim \frac{(2n)^d d_n^{-\beta} \exp (- (2n)^d d_n^{-\beta})}{(2n)^dd_n^{-\beta}+d_n^{-\beta}\exp\left(-(2n)^dd_n^{-\beta}\right)} \to 1,
    \end{align*}
    where we made use of the assumption $d_n^\beta n^{-d} \to \infty$ again. Thus, it remains to calculate the limit of
    $$\frac{\mathbb{P}_0\left(e^*_n>t_n, \exists ! \, x :  W_x>d_n\right)  }{\mathbb{P}_0\left(\exists ! \, x :  W_x>d_n\right)} ,$$
    which we will do by also applying Lemma \ref{le:jointedgesweights}. We write
    \begin{align*}
        &\frac{\mathbb{P}_0\left(e^*_n>t_n, \exists ! \, x :  W_x>d_n\right)  }{\mathbb{P}_0\left(\exists ! \, x :  W_x>d_n\right)} \\
        & =  \frac{\mathbb{P}_0\left( \exists ! \, x :  W_x>d_n, e^*_x>t_n\right) +  \mathbb{P}_0\left( \exists ! \, x :  W_x>d_n, e^*_x<t_n, \bar{e}_n(x)>t_n\right) }{\mathbb{P}_0\left(\exists ! \, x :  W_x>d_n\right)} ,
    \end{align*}
    where as before, for $x \in V \cap B_n$,
    \[
e_x^* \,:=\, \sup\bigl\{|y|:\ y\in V\setminus\{x\},\ x\leftrightarrow y\bigr\}, \quad \bar{e}_n(x) := \max_{ y\in V\setminus\{x\}} e_y^*
\]
    and
    \begin{align*}
         \frac{ \mathbb{P}_0\left( \exists ! \, x :  W_x>d_n, e^*_x<t_n, \bar{e}_n(x)>t_n\right) }{\mathbb{P}_0\left(\exists ! \, x :  W_x>d_n\right)} &\leq \frac{ \mathbb{P}_0\left( \exists  \, x :  W_x>d_n, e^*_x<t_n, \bar{e}_n(x)>t_n\right) }{\mathbb{P}_0\left(\exists ! \, x :  W_x>d_n\right)} \\
        & \leq   \frac{\mathbb{P}_0\left( \exists  \, x :  W_x>d_n,  \bar{e}_n(x)>t_n\right) }{\mathbb{P}_0\left(\exists ! \, x :  W_x>d_n\right)} \\
        & \leq c  \frac{ n^d d_n^{-\beta}\mathbb{P}_0\left(  \bar{e}_n(x)>t_n\right) }{\mathbb{P}_0\left(\exists ! \, x :  W_x>d_n\right)} \\
        & \leq c  \mathbb{P}_0\left(  \bar{e}_n(x)>t_n\right)  \leq c  \mathbb{P}_0\left(  e_n^*>t_n\right) \to 0.
    \end{align*}
    Thus, it only remains to consider
     \begin{align*}
        &   \frac{\mathbb{P}_0\left( \exists ! \, x :  W_x>d_n, e^*_x>t_n\right) }{\mathbb{P}_0\left(\exists ! \, x :  W_x>d_n\right)} \\
        & = \frac{\mathbb{P}_0\left( \exists ! \, x :  W_x>d_n, \exists ! \, y \in B_{t_n}^C (x) :  x \leftrightarrow y\right) }{\mathbb{P}_0\left(\exists ! \, x :  W_x>d_n\right)} \\
        & \quad + \frac{\mathbb{P}_0\left( \exists ! \, x :  W_x>d_n, \exists  \, \textnormal{ at least two }y_1, y_2 \in B_{t_n}^C (x) :  x \leftrightarrow y_1 , x \leftrightarrow y_2\right) }{\mathbb{P}_0\left(\exists ! \, x :  W_x>d_n\right)}\\
        & = \frac{\mathbb{P}_0\left( X_n = 1\right) }{\mathbb{P}_0\left(\exists ! \, x :  W_x>d_n\right)} \\
        & \quad + \frac{\mathbb{P}_0\left( \exists ! \, x :  W_x>d_n, \exists  \, \textnormal{ at least two }y_1, y_2 \in B_{t_n}^C (x) :  x \leftrightarrow y_1 , x \leftrightarrow y_2\right) }{\mathbb{P}_0\left(\exists ! \, x :  W_x>d_n\right)} ,
    \end{align*}
    where $X_n$ is as defined in Lemma \ref{le:jointedgesweights}. We  estimate the second term in the latter sum. Using Lemma \ref{le:jointedgesweights}, we obtain:
    \begin{align*}
        & \frac{\mathbb{P}_0\left( \exists ! \, x :  W_x>d_n, \exists  \, \textnormal{ at least two }y_1, y_2 \in B_{t_n}^C (x) :  x \leftrightarrow y_1 , x \leftrightarrow y_2\right) }{\mathbb{P}_0\left(\exists ! \, x :  W_x>d_n\right)} \\
        & \leq \frac{\mathbb{P}_0\left( X_n \geq 2\right) }{\mathbb{P}_0\left(\exists ! \, x :  W_x>d_n\right)} = \frac{1-\mathbb{P}_0\left( X_n = 0\right) -  \mathbb{P}_0\left( X_n = 1\right)}{\mathbb{P}_0\left(\exists ! \, x :  W_x>d_n\right)} \\
        & \sim \frac{1-\exp (-\mathbb{E}[X_n]) -  \mathbb{E}[X_n]\exp (-\mathbb{E}[X_n])}{(2n)^d d_n^{-\beta} \exp (- (2n)^d d_n^{-\beta})} \\
        & \leq \frac{\mathbb{E}[X_n] -  \mathbb{E}[X_n]\exp (-\mathbb{E}[X_n])}{(2n)^d d_n^{-\beta} \exp (- (2n)^d d_n^{-\beta})} \\
        %&\sim \frac{\kappa (2n)^d t_n^{d-\alpha} d_n^{-\beta +1} -  \kappa (2n)^d t_n^{d-\alpha} d_n^{-\beta +1}\exp (-\kappa (2n)^d t_n^{d-\alpha} d_n^{-\beta +1})}{(2n)^d d_n^{-\beta} \exp (- (2n)^d d_n^{-\beta})} \\
       % & = \frac{\kappa  t_n^{d-\alpha} d_n -  \kappa  t_n^{d-\alpha} d_n\exp (-\kappa (2n)^d t_n^{d-\alpha} d_n^{-\beta +1})}{ \exp (- (2n)^d d_n^{-\beta})} \\
        %& \to \left(1+\frac{t}{\alpha-d}\right)^{d-\alpha} - \left(1+\frac{t}{\alpha-d}\right)^{d-\alpha} = 0,
    \end{align*}
 Set $A_n:=(2n)^d d_n^{-\beta}$ and $\mu_n:=\mathbb{E}[X_n]$. Then
\[
\frac{\mathbb{E}[X_n]-\mathbb{E}[X_n]e^{-\mathbb{E}[X_n]}}{(2n)^d d_n^{-\beta}e^{-(2n)^d d_n^{-\beta}}}
=\frac{\mu_n(1-e^{-\mu_n})}{A_n e^{-A_n}}.
\]
By the definition of the sequences $t_n$ and $d_n$, we have  $A_n\to 0$, $\mu_n\to 0$, and $\mu_n^2/A_n\to 0$. Since $e^{-A_n}\to 1$ and
$1-e^{-\mu_n}\sim \mu_n$ as $\mu_n\downarrow 0$, we obtain
\[
\frac{\mu_n(1-e^{-\mu_n})}{A_n e^{-A_n}}
\sim \frac{\mu_n^2}{A_n}\longrightarrow 0.
\]
    
    Thus, it only remains to consider the limit of 
    $$\frac{\mathbb{P}_0\left( X_n = 1\right) }{\mathbb{P}_0\left(\exists ! \, x :  W_x>d_n\right)} .$$
    Using Lemma \ref{le:jointedgesweights} as in the previous step, we obtain:
    \begin{align*}
        \frac{\mathbb{P}_0\left( X_n = 1\right) }{\mathbb{P}_0\left(\exists ! \, x :  W_x>d_n\right)} & \sim  \frac{\mathbb{E}[X_n]\exp (-\mathbb{E}[X_n]) }{(2n)^d d_n^{-\beta} \exp (- (2n)^d d_n^{-\beta})} \\
       &  \sim \frac{\mathbb{E}[X_n] }{(2n)^d d_n^{-\beta} }
       % & \sim \frac{\kappa (2n)^d t_n^{d-\alpha} d_n^{-\beta +1}\exp (-\kappa (2n)^d t_n^{d-\alpha} d_n^{-\beta +1}) }{(2n)^d d_n^{-\beta} \exp (- (2n)^d d_n^{-\beta})} \\
        %& = \frac{\kappa  t_n^{d-\alpha} d_n\exp (-\kappa (2n)^d t_n^{d-\alpha} d_n^{-\beta +1}) }{ \exp (- (2n)^d d_n^{-\beta})}
        \to \left(1+\frac{t}{\theta}\right)^{-\theta} ,
    \end{align*}
    as before, where we used Lemma \ref{le:jointedgesweights} twice, the exact definition and assumption on the sequences $t_n$ and $d_n$ given in Theorem \ref{thm:POT-hub}. The proof is complete.
 \medskip

\paragraph{\textbf{Acknowledgements.}}
Part of this work was carried out during a research stay at CIRM (Centre International de Rencontres Mathématiques). The authors gratefully acknowledge CIRM for its hospitality and excellent working conditions. The IMB receives support from the EIPHI Graduate School (contract ANR-17-EURE-0002).

\bibliographystyle{amsplain}
\bibliography{lit}

\providecommand{\bysame}{\leavevmode\hbox to3em{\hrulefill}\thinspace}
\providecommand{\MR}{\relax\ifhmode\unskip\space\fi MR }
% \MRhref is called by the amsart/book/proc definition of \MR.
\providecommand{\MRhref}[2]{%
  \href{http://www.ams.org/mathscinet-getitem?mr=#1}{#2}
}
\providecommand{\href}[2]{#2}
\begin{thebibliography}{1}

\bibitem{barbour}
A.~D. Barbour, L.~Holst, and S.~Janson, \emph{Poisson approximation}, vol.~2,
  The Clarendon Press Oxford University Press, 1992.

\bibitem{deijfen2013scale}
M.~Deijfen, R.~Van~der Hofstad, and G.~Hooghiemstra, \emph{Scale-free
  percolation}, Annales de l'IHP Probabilit{\'e}s et statistiques, vol.~49,
  2013, pp.~817--838.

\bibitem{deprez2019scale}
P.~Deprez and M.~W{\"u}thrich, \emph{Scale-free percolation in continuum
  space}, Communications in Mathematics and Statistics \textbf{7} (2019),
  no.~3, 269--308.

\bibitem{iyer2}
S.~K. Iyer and S.~K. Jhawar, \emph{Poisson approximation and connectivity in a
  scale-free random connection model}, Electronic Journal of Probability
  \textbf{26} (2021), 1--23.

\bibitem{Penrose2}
M.~D. Penrose, \emph{{Inhomogeneous random graphs, isolated vertices, and
  Poisson approximation}}, Journal of Applied Probability \textbf{55} (2018),
  no.~1, 112--136.

\bibitem{rs1}
A.~Rousselle and E.~S{\"o}nmez, \emph{The longest edge in discrete and
  continuous long-range percolation}, Extremes \textbf{27} (2024), no.~4,
  673--703.

\bibitem{rs2}
\bysame, \emph{The longest edge of the one-dimensional soft random geometric
  graph with boundaries}, Stochastic Models \textbf{40} (2024), no.~2,
  399--416.

\bibitem{SW}
R.~Schneider and W.~Weil, \emph{Stochastic and integral geometry}, Probability
  and its Applications (New York), Springer-Verlag, Berlin, 2008. \MR{2455326}

\end{thebibliography}

\end{document}